\documentclass[a4paper,11pt,twoside,notitlepage]{amsart}

\usepackage{amsmath,amssymb,amsthm,amscd}
\usepackage{mathtools}
\usepackage{mathrsfs}

\usepackage{graphicx}
\usepackage{xcolor}
\usepackage{tikz}

\usepackage[ocgcolorlinks,linkcolor=blue]{hyperref}
\usepackage{url}
\usepackage[shortlabels]{enumitem}
\usepackage{comment}
\usepackage{verbatim}
\usepackage{lineno}

\usepackage{bm}
\usepackage{bbm}
\usepackage{dsfont}
\usepackage{relsize}
\usepackage{esint}
\usepackage[normalem]{ulem}
\usepackage[utf8]{inputenc}

\newcommand\redsout{\bgroup\markoverwith{\textcolor{red}{\rule[0.5ex]{2pt}{0.4pt}}}\ULon}

\numberwithin{equation}{section}

\DeclareMathOperator{\dist}{dist}

\allowdisplaybreaks
\newcommand{\para}[1]{\vspace{3mm}\noindent\textbf{#1.}}

\mathtoolsset{showonlyrefs}
\graphicspath{{images/}}

\newtheorem{theorem}{Theorem}[section]
\newtheorem{lemma}[theorem]{Lemma}
\newtheorem*{lemma*}{Lemma}
\newtheorem{definition}[theorem]{Definition}
\newtheorem{corollary}[theorem]{Corollary}

\newtheorem{proposition}[theorem]{Proposition}
\newtheorem{question}{Question}
\newtheorem{remark}[theorem]{Remark}

\newcommand{\R}{\mathbb R}
\newcommand{\C}{\mathbb C}
\newcommand{\N}{\mathbb N}

\newcommand{\Sn}{\mathbb S^{n-1}}

\newcommand{\supp}{\operatorname{supp}}
\newcommand{\PV}{\operatorname{P.V.}}
\newcommand{\wt}{\widetilde}

\newcommand{\ip}[2]{\left\langle #1,#2\right\rangle}

\newcommand{\abs}[1]{\left\lvert #1\right\rvert}

\title[Recovering stable kernels]{Recovering stable kernels from exterior measurements}

\author[Y.-H. Lin]{Yi-Hsuan Lin}
\address{Department of Applied Mathematics, National Yang Ming Chiao Tung University, Hsinchu, Taiwan}
\email{yihsuanlin3@gmail.com}

\keywords{Nonlocal inverse problems, stable operators, angular kernels, spherical harmonics, unique continuation}
\subjclass[2020]{Primary 35R30; Secondary 35S15, 35B60, 42B10}

\begin{document}

	\begin{abstract}
		We study an inverse problem for translation-invariant symmetric stable operators of the form
		\[
		L_a u(x)=\PV\int_{\R^n}(u(x)-u(y))\frac{a((x-y)/|x-y|)}{|x-y|^{n+2s}}\,dy,
		\quad 0<s<1,
		\]
		where the unknown is the even angular density $a$ on $\Sn$. For a bounded open set $\Omega\subset\R^n$, with $\Omega_e=\R^n\setminus\overline\Omega$, we consider restricted exterior Dirichlet-to-Neumann maps $\Lambda_a^{W_1,W_2}$, where exterior data are supported in $W_1\Subset\Omega_e$ and the nonlocal Neumann data are observed on $W_2\Subset\Omega_e$. We prove three recovery results for the leading angular density. In the overlapping regime $W_1\cap W_2\ne\emptyset$, the exterior diagonal singularity determines every smooth elliptic angular density. In the separated regime $\overline W_1\cap\overline W_2=\emptyset$, where this singularity is absent, we prove uniqueness in the finite harmonic angular class by an exact factorization of the stable symbol. We also prove separated-data uniqueness for real-analytic angular densities when the source and observation sets lie in the unbounded exterior component, using analytic continuation of the off-diagonal Dirichlet-to-Neumann kernel and a far-field asymptotic argument.
	\end{abstract}

	\maketitle
	\tableofcontents

	\section{Introduction}\label{sec:introduction}
	
	Nonlocal elliptic operators arise naturally as infinitesimal generators of jump processes and as models in anomalous diffusion. A distinguished example is the fractional Laplacian $(-\Delta)^s$, $0<s<1$, whose strong unique continuation property has played a central role in the fractional Calder\'on problem of \cite{GSU20}. Subsequent developments include uniqueness, reconstruction, stability, single measurement results, drift perturbations, nonlinear models, parabolic models and higher order fractional analogues; see, for instance, \cite{GRSU20,CLR20,ruland2015unique,Yu17,CMR20,CMRU20,LLR2019calder,KLW2021calder,lin2020monotonicity}. Variable coefficient and anisotropic versions of fractional inverse problems have also been studied in \cite{GLX,CGRU2023reduction,ruland2023revisiting,feizmohammadi2024fractional_closed,FGKU_2025,FKU24}. We also refer the reader to the recent monograph \cite{LL25_Integro} for further background.
	
	The purpose of this paper is to study the recovery of the leading angular density for a class of translation-invariant symmetric stable operators. Let $a$ be a real-valued even function on the unit sphere $\Sn$, and define
	\begin{equation}\label{eq:kernel-intro}
		K_a(z)=\frac{a(z/|z|)}{|z|^{n+2s}},\quad z\in\R^n\setminus\{0\}.
	\end{equation}
	The associated symmetric integro-differential operator is
	\begin{equation}\label{eq:operator-intro}
		L_a u(x):=\PV\int_{\R^n}(u(x)-u(y))K_a(x-y)\,dy,
	\end{equation}
	where $\PV$ denotes the Cauchy principal value. Since $L_a$ is translation invariant, it is a Fourier multiplier: for $u\in\mathcal S(\R^n)$, one has $\widehat{L_a u}(\xi)=A_a(\xi)\widehat u(\xi)$. The Fourier multiplier symbol is given by
	\begin{equation}\label{eq:symbol-intro}
		A_a(\xi)=c_{n,s}\int_{\Sn}|\xi\cdot\theta|^{2s}a(\theta)\,d\theta,
	\end{equation}
	where $c_{n,s}>0$ is fixed throughout the paper. We assume the ellipticity condition
	\begin{equation}\label{eq:ellipticity-intro}
		\lambda |\xi|^{2s}\le A_a(\xi)\le \Lambda |\xi|^{2s}
	\end{equation}
	for all $\xi\in\R^n$ and for some constants $0<\lambda\le\Lambda<\infty$. The isotropic density gives the fractional Laplacian up to a multiplicative constant. In general, $a$ describes the anisotropic angular profile of the jumps.
	
	The homogeneous stable operators in \eqref{eq:operator-intro} are closely related to the \emph{directionally antilocal operators} studied in \cite{CoviGarciaFerreroRuland2022}. In that work, the principal homogeneous operator is fixed, and lower-order perturbations are recovered by combining directional antilocality, domain of dependence geometry, and Runge approximation. In contrast, the present paper treats the principal angular density itself as the unknown. Thus, the inverse problem considered here is a leading coefficient problem rather than a lower-order perturbation problem.
	
	This distinction is important. In the recovery of a lower-order potential, the usual Alessandrini identity contains an interior product of solutions. In the present problem, the unknown appears in the leading jump kernel and is attached to the direction $(x-y)/|x-y|$. Thus, the natural integral identity involves angular jump products rather than pointwise products in $\Omega$. This is the main reason why the leading coefficient problem is not a direct consequence of the Runge approximation methods used for lower-order terms.
	
	Let $\Omega\subset\R^n$ be bounded and open, and set $\Omega_e:=\R^n\setminus\overline\Omega$. Given an exterior datum supported in an open set $W_1\Subset\Omega_e$, let $u_f$ solve
	\begin{equation}\label{eq:intro-dirichlet-cases}
		\begin{cases}
			L_a u_f=0 & \text{in }\Omega,\\
			u_f=f & \text{in }\Omega_e.
		\end{cases}
	\end{equation}
	The exterior condition in \eqref{eq:intro-dirichlet-cases} is understood through the Sobolev condition $u_f-F\in\widetilde H^s(\Omega)$, where $F\in H^s(\R^n)$ is a representative of the exterior datum. For a second exterior open set $W_2\Subset\Omega_e$, we consider the restricted exterior Dirichlet-to-Neumann (DN) map
	\begin{equation}\label{eq:DN-intro-space}
		\Lambda_a^{W_1,W_2}:\widetilde H^s(W_1)\to H^{-s}(W_2).
	\end{equation}
	The precise definition of the Sobolev spaces and of the map \eqref{eq:DN-intro-space} is given in Section \ref{sec:preliminaries}. We are interested in the following question:
	
	\begin{question}\label{Q:IP}	
		Does $\Lambda_a^{W_1,W_2}$ determine $a$?
	\end{question}

	There are two different measurement regimes, illustrated in Figure \ref{fig:measurement-geometries}. If $W_1\cap W_2\ne\emptyset$, then the exterior diagonal singularity of the restricted DN map is visible. In that case, the leading kernel can be recovered directly. The more delicate case is the genuinely separated regime
	\begin{equation}\label{eq:separated-intro}
		\overline W_1\cap\overline W_2=\emptyset.
	\end{equation}
	In this case, the direct kernel $K_a(x-y)$ is smooth on $W_2\times W_1$, and the diagonal singularity is not measured. The separated problem asks whether the remaining off-diagonal information still determines the angular density. This question is the main motivation of the paper.
	
	\begin{figure}[t]
		\centering
		\begin{tikzpicture}[scale=0.90, every node/.style={font=\small}]
			
			\begin{scope}[shift={(-4.55,0)}]
				
				\draw[fill=gray!4, draw=gray!35, rounded corners=3pt]
				(-3.55,-2.10) rectangle (3.55,2.30);
				\node at (0,2.02) {\textbf{Overlapping exterior measurements}};
				\node[gray!70!black] at (-2.55,1.55) {$\Omega_e$};
				
				\draw[fill=orange!35, draw=orange!80!black, thick]
				(-1.35,0) ellipse (0.82 and 0.60);
				\node at (-1.35,0) {$\Omega$};
				
				\draw[fill=blue!22, draw=blue!70!black, thick]
				(1.35,0.28) ellipse (0.92 and 0.56);
				\draw[fill=green!24, draw=green!55!black, thick]
				(1.98,0.02) ellipse (0.92 and 0.56);
				
				\node[blue!70!black] at (0.88,-0.53) {$W_1$};
				\node[green!45!black] at (2.42,-0.66) {$W_2$};
				
				\draw[fill=purple!25, draw=purple!70!black, thick]
				(1.68,0.14) ellipse (0.28 and 0.20);
				\node[purple!70!black] at (2.10,0.00) {$U$};
				\fill[purple!80!black] (1.68,0.14) circle (1.5pt);
				\node[purple!80!black] at (1.8,0.50) {\scriptsize exterior diagonal $x=y$};
				\draw[->, purple!70!black] (2.05,0.40) -- (1.72,0.18);
				
				\node at (1.28,-1.35) {$U\Subset W_1\cap W_2$};
				\node at (1.28,-1.78) {$W_1\cap W_2\ne\emptyset$};
				
				\draw[<->, gray!70] (-0.52,0.42) -- (0.42,0.55);
				\node[gray!70!black] at (-0.05,1.00) {\scriptsize exterior sets stay away from $\Omega$};
				
			\end{scope}

			\begin{scope}[shift={(4.15,0)}]
				
				\draw[fill=gray!4, draw=gray!35, rounded corners=3pt]
				(-3.55,-2.10) rectangle (3.35,2.30);
				\node at (0,2.02) {\textbf{Separated exterior measurements}};
				\node[gray!70!black] at (-2.85,1.55) {$\Omega_e$};
				
				\draw[fill=orange!35, draw=orange!80!black, thick]
				(0,0) ellipse (0.82 and 0.60);
				\node at (0,0) {$\Omega$};
				
				\draw[fill=blue!22, draw=blue!70!black, thick]
				(-2.25,0.68) ellipse (0.72 and 0.46);
				\node[blue!70!black] at (-2.25,1.30) {$W_1$};
				\fill[blue!80!black] (-2.25,0.68) circle (1.5pt);
				\node at (-2.25,0.38) {\scriptsize $y$};
				
				\draw[fill=green!24, draw=green!55!black, thick]
				(2.25,0.68) ellipse (0.72 and 0.46);
				\node[green!45!black] at (2.25,1.30) {$W_2$};
				\fill[green!50!black] (2.25,0.68) circle (1.5pt);
				\node at (2.25,0.38) {\scriptsize $x$};
				
				\draw[->, thick, blue!70!black]
				(-1.62,0.90) .. controls (-0.55,1.55) and (0.55,1.55) .. (1.62,0.90);
				\node[blue!70!black] at (0,1.55) {\scriptsize direct term $K_a(x-y)$};
				
				\draw[->, thick, dashed, gray!75]
				(-1.80,0.50) .. controls (-1.00,-0.10) and (-0.58,-0.28) .. (-0.30,-0.20);
				\draw[->, thick, dashed, gray!75]
				(0.30,-0.20) .. controls (0.58,-0.28) and (1.00,-0.10) .. (1.80,0.50);
				\node[gray!75!black] at (0,-0.82) {\scriptsize interior correction through $\Omega$};
				
				\node at (0,-1.40) {$\overline W_1\cap\overline W_2=\emptyset$};
				\node at (0,-1.78) {\scriptsize the exterior diagonal $x=y$ is not measured};
				
			\end{scope}
			
		\end{tikzpicture}
		\caption{Exterior measurement geometries. The source set $W_1$ and the observation set $W_2$ are compactly contained in $\Omega_e=\R^n\setminus\overline\Omega$ and therefore do not touch $\Omega$. In the overlapping case, one may restrict to $U\Subset W_1\cap W_2$, where the exterior diagonal singularity of the Dirichlet-to-Neumann map is visible. In the separated case, the diagonal is not measured; the off-diagonal kernel contains a direct exterior-to-exterior contribution and an interior correction through $\Omega$.}
		\label{fig:measurement-geometries}
	\end{figure}
	
	For cone-supported directionally antilocal kernels, the domain of dependence of the operator leads to natural geometric restrictions and possible gauges; this is one of the main points of \cite{CoviGarciaFerreroRuland2022}. The classes considered in the main separated results below have full angular support. Thus, the geometric obstruction is not a cone-domain gauge. The difficulty is instead that the diagonal singularity is absent from the measured operator when \eqref{eq:separated-intro} holds.
	
	\para{Overlapping measurements}
	We first state the recovery result in the overlapping regime. We say that $a$ is a smooth elliptic angular density if $a\in C^\infty(\Sn)$ is real-valued, even, nonnegative, and the symbol $A_a$ satisfies \eqref{eq:ellipticity-intro}.
	
	The first result is the following.
	
	\begin{theorem}[Recovery from overlapping exterior measurements]\label{thm:overlap}
		Let $0<s<1$ and $n\ge2$. Let $\Omega\subset\R^n$ be bounded and open. Let $W_1,W_2\Subset\Omega_e$ be nonempty open sets such that $W_1\cap W_2\ne\emptyset$. Let $a_1,a_2\in C^\infty(\Sn)$ be smooth elliptic angular densities. If
		\begin{equation}\label{eq:overlap-DN-equality-intro}
			\Lambda_{a_1}^{W_1,W_2}f=\Lambda_{a_2}^{W_1,W_2}f \quad \text{for all }f\in \wt H^s(W_1),
		\end{equation}
		then $a_1=a_2$ on $\Sn$.
	\end{theorem}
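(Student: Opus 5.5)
The plan is to read off the kernel $K_a$ from the Schwartz kernel of the restricted DN map near the exterior diagonal. First, write the DN map in bilinear form. For $f\in\wt H^s(W_1)$ and $g\in\wt H^s(W_2)$,
\[
\ip{\Lambda_a^{W_1,W_2}f}{g}=\mathcal E_a(u_f,g),
\qquad
\mathcal E_a(u,v)=\tfrac12\iint (u(x)-u(y))(v(x)-v(y))K_a(x-y)\,dx\,dy .
\]
Decompose $u_f=f+w_f$, where $w_f\in\wt H^s(\Omega)$ solves $L_a w_f=-L_a f$ in $\Omega$. This gives
\[
\ip{\Lambda_a^{W_1,W_2}f}{g}=\mathcal E_a(f,g)+\mathcal E_a(w_f,g).
\]
Since $\supp w_f\subset\overline\Omega$ and $\supp g\subset \overline W_2$ lie at positive distance, we have
\[
\mathcal E_a(w_f,g)=-\int_{\Omega}\int_{W_2} w_f(y)g(x)K_a(x-y)\,dx\,dy .
\]
This second term has a kernel that is $C^\infty$ in $x\in W_2$, because $K_a$ is smooth away from the origin and $a\in C^\infty(\Sn)$. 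More precisely, the map $f\mapsto w_f$ is bounded into $\wt H^s(\Omega)$, and for $x\in W_2$ the function $y\mapsto K_a(x-y)$ is smooth on $\overline\Omega$. Hence the correction term defines an operator $R_a:\wt H^s(W_1)\to C^\infty(W_2)$.

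Next, fix a small ball $U\Subset W_1\cap W_2$ and test with $f,g\in C_c^\infty(U)$. The leading term is $\mathcal E_a(f,g)=\ip{L_a f}{g}$. Equality of the DN maps then gives
\[
\ip{(L_{a_1}-L_{a_2})f}{g}=\ip{(R_{a_2}-R_{a_1})f}{g}
\quad\text{for all } f,g\in C_c^\infty(U).
\]
For disjoint supports, $\ip{L_a f}{g}=-\iint K_a(x-y)f(y)g(x)\,dy\,dx$. So the Schwartz kernel of $L_{a_1}-L_{a_2}$ restricted to $U\times U$ equals
\[
-(K_{a_1}-K_{a_2})(x-y)
\]
off the diagonal, while the right-hand side has a kernel $r(x,y)$ that is smooth in $x$, uniformly for $y$ in compacts. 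The key step is that $K_{a_1}-K_{a_2}$, which is homogeneous of degree $-n-2s$, can agree with such a kernel near the diagonal only if it vanishes. Fix $y_0\in U$ and a direction $\theta$, and set $x=y_0+t\theta$ with $t\downarrow0$. Then
\[
(K_{a_1}-K_{a_2})(t\theta)=t^{-n-2s}(a_1-a_2)(\theta).
\]

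The main obstacle is that the remainder kernel is only smooth in $x$, not jointly, so I cannot simply claim boundedness of $r$ near the diagonal. To handle this I would test against concentrated functions: $g_\varepsilon(x)=\varepsilon^{-n}\phi((x-x_0)/\varepsilon)$ and $f_\varepsilon(y)=\varepsilon^{-n}\psi((y-y_0)/\varepsilon)$, with $x_0-y_0=t\theta$ fixed and $\varepsilon\ll t$.

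On the remainder side, $\ip{R f_\varepsilon}{g_\varepsilon}$ stays bounded as $t\to0$ with $\varepsilon=t/10$. Indeed, $f_\varepsilon\to\delta_{y_0}$ only weakly, so I would instead use mapping bounds. Since $f$ has support at distance at least $\dist(U,\Omega)$ from $\Omega$, $L_a f$ restricted to $\Omega$ is controlled by $\|f\|_{L^1}$. Hence $\|w_f\|_{\wt H^s(\Omega)}\lesssim\|f\|_{L^1}$, and then $|\ip{R f}{g}|\lesssim \|f\|_{L^1}\|g\|_{L^1}$, uniformly in $t$.

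On the leading side, $\ip{(L_{a_1}-L_{a_2})f_\varepsilon}{g_\varepsilon}$ is approximately $-t^{-n-2s}(a_1-a_2)(\theta)$, which blows up as $t\to0$ unless $(a_1-a_2)(\theta)=0$. This gives $a_1=a_2$ on the open set of directions realized within $U$, namely all of $\Sn$, since $U$ is a ball. An alternative, cleaner route is to note that $R_{a_1}-R_{a_2}$ has an integral kernel bounded on $U\times U$ by the same $L^1$ argument. Then the distribution $K_{a_1}-K_{a_2}$ is locally bounded near $0$, which forces $a_1-a_2=0$ by homogeneity.
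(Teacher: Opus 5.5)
Your proof is correct, and it takes a genuinely different route from the paper's.

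\textbf{The paper's route.} It localizes in frequency. It tests $\Lambda_{a_j}^{U,U}$ against $f_h=\eta e^{\mathsf i x\cdot\xi/h}$ and shows by non-stationary phase that the interior correction is $O(h^N)$ (Lemma~\ref{lem:off-diagonal-smoothing}). It then obtains $h^{2s}\ip{\Lambda_{a_j}^{U,U}f_h}{f_h}\to A_{a_j}(\xi)\|\eta\|_{L^2(\R^n)}^2$ (Lemma~\ref{lem:semiclassical-symbol}). This recovers only the symbol. To return to $a_1=a_2$ it must invoke injectivity of the even cosine transform (Lemma~\ref{lem:smooth-cosine-injective}), which rests on Funk--Hecke and $\gamma_{\ell,s}\neq0$.

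\textbf{Your route.} You localize in space instead. The direct term contributes $-K_{a_1-a_2}(x-y)$, homogeneous of degree $-n-2s$. The interior correction is bounded by $C\|f\|_{L^1}\|g\|_{L^1}$. So the blow-up rate as $|x-y|\to0$ reads off $a_1-a_2$ pointwise. This is essentially the mechanism of the paper's proof of Theorem~\ref{thm:analytic-main}, run in the near-diagonal limit $t\to0$ rather than the far-field limit $R\to\infty$.

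Your ``cleaner route'' follows directly from the paper's own lemmas:
\begin{itemize}
\item By the last assertion of Lemma~\ref{lem:off-diagonal-kernel}, \eqref{eq:Sa-kernel} is the kernel near every pair $(x,y)\in U\times U$ with $x\neq y$.
\item Equality of the DN maps then gives $S_{a_1}=S_{a_2}$ there.
\item Lemma~\ref{lem:far-field-asymptotic} bounds $S_{a_j}+K_{a_j}$ on $U\times U$ by $C\dist(U,\Omega)^{-2n-4s}$.
\item Hence $t^{-n-2s}\abs{(a_1-a_2)(\theta)}$ stays bounded as $t\to0$, which forces $a_1=a_2$.
\end{itemize}

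\textbf{Comparison.} Your approach buys elementarity. It needs no semiclassical analysis, no spherical harmonics and no cosine-transform injectivity. The only estimate it uses for the remainder is the crude bound $\abs{K_a(z-y)}\le\|a\|_{L^\infty(\Sn)}\dist(U,\Omega)^{-n-2s}$. The paper's route identifies the full symbol $A_a$ from the quadratic form $\ip{\Lambda f}{f}$ alone. That is the natural microlocal statement, and it does not require evaluating the kernel pointwise.

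\textbf{One detail to tighten.} With $\varepsilon=t/10$ the leading term is $-t^{-n-2s}\iint K_{a_1-a_2}(\theta+(u-v)/10)\psi(v)\overline{\phi(u)}\,du\,dv$, not $-t^{-n-2s}(a_1-a_2)(\theta)$. Take $\varepsilon/t\to0$ (e.g.\ $\varepsilon=t^2$) and choose $\phi,\psi$ with nonzero integrals. This costs nothing, because your remainder bound depends only on $\|f_\varepsilon\|_{L^1}=\|\psi\|_{L^1}$ and $\|g_\varepsilon\|_{L^1}=\|\phi\|_{L^1}$, both independent of $t$ and $\varepsilon$.
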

	
	\para{Finite harmonic class and separated measurements}
	Let $\mathscr H_\ell$ denote the space of spherical harmonics of degree $\ell$ on $\Sn$. For $m\in\N_0$, set
	\begin{equation}\label{eq:Am-def}
		\mathcal A_m:=\bigoplus_{\substack{0\le \ell\le 2m\\ \ell\text{ even}}}\mathscr H_\ell.
	\end{equation}
	Thus, $\mathcal A_m$ is the finite-dimensional space of even spherical harmonics of degree at most $2m$. We do not include either the parity or the degree bound in the symbol $\mathcal A_m$. The restriction to even degrees $\ell$ is forced by the symmetry assumption. Indeed, spherical harmonics of degree $\ell$ satisfy $Y_\ell(-\theta)=(-1)^\ell Y_\ell(\theta)$. Hence, an even angular density has vanishing spherical harmonic coefficients in all odd degrees. Thus, the natural finite-dimensional class of even angular densities of degree at most $2m$ is precisely $\mathcal A_m$. This parity is also compatible with the factorization below: for even $\ell\le 2m$, the factor $|\xi|^{2m-\ell}$ is a polynomial in $\xi$.
	
	\begin{definition}\label{def:admissible}
		Let $0<s<1$ and $m\in\N_0$. A function $a$ is called an admissible finite harmonic density of order $m$ if
		\begin{enumerate}[\rm(i)]
			\item $a\in\mathcal A_m$ is real-valued;
			\item $a(\theta)\ge0$ for a.e. $\theta\in\Sn$;
			\item the symbol $A_a$ satisfies the ellipticity estimate \eqref{eq:ellipticity-intro}.
		\end{enumerate}
	\end{definition}
	
	The finite harmonic class is a full angular support finite-dimensional class. It is not intended to model kernels supported in a proper cone. Indeed, a real analytic angular function on the whole sphere cannot be supported in a proper open spherical cone unless it is identically zero.
	
	\begin{theorem}[Recovery from separated exterior measurements]\label{thm:separated-main}
		Let $0<s<1$ and $n\ge2$. Let $\Omega\subset\R^n$ be bounded and open. Let $W_1,W_2\Subset\Omega_e$ be nonempty open sets satisfying \eqref{eq:separated-intro}. Let $a_1,a_2\in\mathcal A_m$ be admissible finite harmonic densities of the same order $m$. If
		\begin{equation}\label{eq:operator-DN-equality-main}
			\Lambda_{a_1}^{W_1,W_2}f=\Lambda_{a_2}^{W_1,W_2}f
			\quad \text{for all }f\in\widetilde H^s(W_1),
		\end{equation}
		then $a_1=a_2$ on $\Sn$.
	\end{theorem}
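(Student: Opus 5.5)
The plan is to use the \emph{exact factorization} of the symbol on $\mathcal A_m$ to write $L_a$ as a local constant-coefficient differential operator composed with one fixed nonlocal operator that does not depend on $a$. The separated DN identity then becomes an identity for the differential parts only, and these can be compared using locality plus the antilocality of the common nonlocal factor.

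\textbf{Step 1 (factorization).} Write $a=\sum_{\ell\text{ even},\,\ell\le 2m}Y_\ell$ with $Y_\ell\in\mathscr H_\ell$. By the Funk--Hecke formula, $\int_{\Sn}|\xi\cdot\theta|^{2s}Y_\ell(\theta)\,d\theta=\mu_{\ell}(s)\,|\xi|^{2s}Y_\ell(\xi/|\xi|)$. For even $\ell$ the constants $\mu_\ell(s)$ are explicit ratios of Gamma functions and do not vanish for $0<s<1$. Writing $H_\ell$ for the harmonic polynomial extending $Y_\ell$, we get
\[
A_a(\xi)=|\xi|^{2s-2m}P_a(\xi),\qquad P_a(\xi):=c_{n,s}\sum_{\ell}\mu_\ell(s)\,|\xi|^{2m-\ell}H_\ell(\xi),
\]
and $P_a$ is a homogeneous polynomial of degree $2m$. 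The map $a\mapsto P_a$ is injective, by uniqueness of the decomposition of a homogeneous polynomial into $|\xi|^{2k}$ times harmonic pieces. Hence $L_a=P_a(D)\,T$ with $T:=(-\Delta)^{s-m}$ independent of $a$; for $m\ge1$, $T$ is a Riesz potential. Ellipticity \eqref{eq:ellipticity-intro} means $P_a(\xi)\ge\lambda|\xi|^{2m}$, so $P_a(D)$ is elliptic. The case $m=0$ is trivial: then $a$ is constant, and the constant is fixed by testing the DN identity with a single $f$, since the direct term $-\int K_a(x-y)f(y)\,dy$ and the interior correction both scale linearly in $a$.

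\textbf{Step 2 (reduction to an identity on $W_2$).} Fix $f\in C_c^\infty(W_1)$, let $u_j$ solve \eqref{eq:intro-dirichlet-cases} for $a_j$, and set $w:=u_1-u_2\in\wt H^s(\Omega)$. On $W_2$ the hypothesis gives $P_1(D)Tu_1=P_2(D)Tu_2$, that is,
\[
P_1(D)\,Tw=(P_2-P_1)(D)\,Tu_2\quad\text{in }W_2 .
\]
Both sides are real-analytic in $\Omega_e$, since the Riesz kernel is analytic off the supports. Moreover, $v_j:=Tu_j$ satisfies $P_j(D)v_j=0$ in $\Omega$.

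I would then use two facts:
\begin{itemize}
\item[(a)] By analytic continuation, the identity propagates from $W_2$ to the connected component of $\R^n\setminus(\overline\Omega\cup\supp f)$ containing $W_2$.
\item[(b)] The direct contribution $(P_2-P_1)(D)Tf$ is singular along $\supp f\subset W_1$, but is smooth away from it.
\end{itemize}
Comparing singularities across $\partial(\supp f)$ after continuation, and using that $Tw$ and the interior part of $Tu_2$ are smooth near $W_1$, forces $(P_2-P_1)(D)Tf$ to extend smoothly across $\supp f$ for every $f$. The difference of the two sides of $L_{a_1}f-L_{a_2}f$ near $W_1$ is the local-plus-nonlocal operator $(P_1-P_2)(D)T$ acting on $f$. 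Its kernel is $(K_{a_1}-K_{a_2})(z)=Q(z)|z|^{-n-2s-2m}$ with $Q$ a homogeneous polynomial of degree $2m$. If $Q\ne0$, this kernel is genuinely singular at $z=0$, which contradicts the smooth extension. Hence $Q=0$, so $a_1=a_2$.

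\textbf{Main obstacle.} The analytic continuation in Step 2 is the hard part. The region where both sides are analytic must reach a neighbourhood of $\supp f$, and the interior correction terms $Tw$ and $T(u_2\mathbf 1_\Omega)$ must be controlled there. This may fail if $W_1$ and $W_2$ lie in different components of $\Omega_e$. To handle that case, I would try first to replace pointwise continuation by a duality argument: pair the identity with $g\in\wt H^s(W_2)$, write $\langle(\Lambda_{a_1}-\Lambda_{a_2})f,g\rangle$ as a bilinear form, and use Runge density of $\{u_2|_\Omega\}$ (available from unique continuation for $L_{a_2}$). The aim is to isolate $\int\!\!\int f(y)g(x)\,(K_{a_1}-K_{a_2})(x-y)\,dx\,dy=0$ for all such $f,g$. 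Real-analyticity of $Q(z)|z|^{-n-2s-2m}$ on $\R^n\setminus\{0\}$ then gives $Q\equiv0$ from its vanishing on the open set $W_2-W_1$.
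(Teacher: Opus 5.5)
There is a genuine gap in Step 2, and it is fatal in the generality of the theorem.

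First, the singularity comparison does not work as stated. For $f\in C_c^\infty(W_1)$, the direct term $(P_2-P_1)(D)Tf=L_{a_2-a_1}f$ is a multiplier homogeneous of degree $2s$ applied to a test function. It is therefore $C^\infty$ on all of $\R^n$ and is not singular along $\supp f$. What continuation actually gives is that $L_{a_1-a_2}f$ coincides, on the component of $\R^n\setminus(\overline\Omega\cup\supp f)$ containing $W_2$, with a function analytic across $\supp f$. To turn this into a contradiction you would have to concentrate $f$ at a point, i.e.\ work with the kernel $S_{a_1}-S_{a_2}$. You would then use that the interior correction stays bounded near the diagonal while $K_{a_1-a_2}(x-y)$ blows up.

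Second, and more seriously, continuation never leaves that component of $\R^n\setminus(\overline\Omega\cup\supp f)$. The theorem allows $W_1$ and $W_2$ to lie in different components of $\Omega_e$: take $\Omega$ an annulus, $W_1$ in the hole and $W_2$ outside. Then nothing reaches $W_1$. Your fallback does not close this. Density of $\{u_f|_\Omega\}$ does not remove the interior correction terms from $\mathcal E_{a_1-a_2}(u_f^{(1)},u_g^{(2)})=0$, and isolating the direct term is exactly the obstruction the paper describes for general densities. Even in the one-component case, the repaired argument is essentially the analytic-kernel mechanism of Theorem \ref{thm:analytic-main}, not a proof of this theorem as stated.

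The missing idea is to put the nonlocal factor outside rather than inside. You already have the right polynomial: your $P_a$ is the paper's $Q_a$. But with the splitting $L_a=P_a(D)T$, the functions $Tu_j$ do not vanish on $W_2$, so no unique continuation applies and you are pushed into analytic continuation. Separately, $T=(-\Delta)^{s-m}$ needs regularization when $2(m-s)\ge n$, since $|\xi|^{2s-2m}$ is then not locally integrable.

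Instead, apply the local operator $(-\Delta)^m$ to $L_{a_1}u_1-L_{a_2}u_2=0$ in $W_2$ and use $(-\Delta)^mL_a=(-\Delta)^sQ_a(D)$.
\begin{itemize}
\item Set $w:=Q_{a_1}(D)u_1-Q_{a_2}(D)u_2\in H^{s-2m}(\R^n)$. Then $(-\Delta)^sw=0$ in $W_2$.
\item Since $u_1=u_2=0$ in $W_2$ and $Q_{a_j}(D)$ is local, also $w=0$ in $W_2$.
\item The strong unique continuation property of $(-\Delta)^s$ (Theorem \ref{thm:fractional-UCP}) is global. It crosses $\overline\Omega$ regardless of how many components $\Omega_e$ has, and gives $w\equiv0$.
\item Restricting to $W_1$, where $u_1=u_2=f$, yields $(Q_{a_1}-Q_{a_2})(D)f=0$ for all $f\in C_c^\infty(W_1)$. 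Hence $Q_{a_1}=Q_{a_2}$, and $a_1=a_2$ by injectivity of $a\mapsto Q_a$.
\end{itemize}
This argument also makes your separate treatment of $m=0$ unnecessary; that case implicitly needed unique continuation anyway, to guarantee that the DN map for the constant density does not vanish on $W_2$.
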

	
	For the proof, it is enough to assume that the equality holds in $\mathcal D'(W_2)$ for all $f\in C_c^\infty(W_1)$. Although Theorem \ref{thm:separated-main} is stated for genuinely separated sets, this causes no loss in the finite harmonic class after restricting the measurements. Indeed, given nonempty exterior open sets $W_1,W_2\Subset\Omega_e$, one can choose nonempty open subsets $U_1\Subset W_1$ and $U_2\Subset W_2$ such that $\overline U_1\cap\overline U_2=\emptyset$. Equality of the DN maps on $W_1,W_2$ restricts to equality on $U_1,U_2$, and Theorem \ref{thm:separated-main} then applies. This is recorded later in Corollary \ref{cor:any-exterior-sets}.
	
	\para{Analytic angular densities}
	We also consider the separated problem for real analytic angular densities. We say that $a$ is a real analytic elliptic angular density if $a\in C^\omega(\Sn)$ is real-valued, even, nonnegative, and the symbol $A_a$ satisfies \eqref{eq:ellipticity-intro}. This is again a full angular support class.
	
	\begin{theorem}[Recovery of analytic angular densities]\label{thm:analytic-main}
		Let $0<s<1$ and $n\ge2$. Let $\Omega\subset\R^n$ be bounded and open, and let $G$ be the unbounded connected component of $\Omega_e$. Let $W_1,W_2\Subset G$ be nonempty open sets satisfying \eqref{eq:separated-intro}. Let $a_1,a_2\in C^\omega(\Sn)$ be real analytic elliptic angular densities. If
		\begin{equation}\label{eq:analytic-DN-equality-main}
			\Lambda_{a_1}^{W_1,W_2}f=\Lambda_{a_2}^{W_1,W_2}f
			\quad \text{for all }f\in\widetilde H^s(W_1),
		\end{equation}
		then $a_1=a_2$ on $\Sn$.
	\end{theorem}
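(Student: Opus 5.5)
We will prove Theorem \ref{thm:analytic-main} by analytic continuation of the off-diagonal kernel of the restricted DN map, followed by a far-field argument that isolates the direct term $K_a(x-y)$.

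\textbf{Step 1: kernel representation.} For $f\in C_c^\infty(W_1)$ and $x\in W_2$, the function $u_f-f$ vanishes outside $\Omega$, and $f(x)=0$ because $\overline W_1\cap\overline W_2=\emptyset$. The weak definition of the DN map then gives
\[
\Lambda_a^{W_1,W_2}f(x)=-\int_{W_1}K_a(x-y)f(y)\,dy-\int_\Omega K_a(x-z)\,(u_f-f)(z)\,dz .
\]
The second term is $-\int_{W_1}R_a(x,y)f(y)\,dy$, where $R_a(x,y)=\int_\Omega K_a(x-z)\,w_y(z)\,dz$. Here $w_y\in\widetilde H^s(\Omega)$ solves $L_aw_y=K_a(\cdot-y)$ in $\Omega$; this holds since $L_a(u_f-f)=-L_af$ and $-L_af(z)=\int K_a(z-y)f(y)\,dy$ for $z\in\Omega$. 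Thus the Schwartz kernel of $\Lambda_a^{W_1,W_2}$ on $W_2\times W_1$ is $\mathcal K_a(x,y)=-K_a(x-y)-R_a(x,y)$, and the hypothesis gives $\mathcal K_{a_1}=\mathcal K_{a_2}$ on $W_2\times W_1$.

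\textbf{Step 2: analytic continuation.} Because $a$ is real analytic, $K_a$ is real analytic on $\R^n\setminus\{0\}$.
\begin{itemize}
\item $R_a(x,y)$ is real analytic in $x\in\R^n\setminus\overline\Omega$ and in $y\in\R^n\setminus\overline\Omega$. In $x$, $z$ ranges over $\Omega$, at positive distance from $x$. In $y$, the map $y\mapsto w_y$ is a composition of the solution operator with the analytic $L^2(\Omega)$-valued map $y\mapsto K_a(\cdot-y)|_\Omega$.
\item $K_a(x-y)$ is analytic off the diagonal.
\end{itemize}
Hence $D(x,y):=\mathcal K_{a_1}-\mathcal K_{a_2}$ is real analytic on $\{(x,y)\in G\times G: x\ne y\}$. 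This set is connected: $G$ is a connected open set and $n\ge2$, so removing the diagonal does not disconnect $G\times G$. Since $D=0$ on the open set $W_2\times W_1$, it follows that $D\equiv0$ on all of $(G\times G)\setminus\{x=y\}$.

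\textbf{Step 3: far field.} Fix $y\in G$ and a direction $\theta$. Let $x=y+t\theta$ with $t\to\infty$; these points lie in $G$ for large $t$, since $G$ contains the complement of a large ball. The direct term is $K_a(t\theta)=a(\theta)\,t^{-n-2s}$. For the correction term, $|K_a(x-z)|\le Ct^{-n-2s}$ uniformly in $z\in\Omega$, and
\[
K_a(x-z)=t^{-n-2s}a(\theta)+O(t^{-n-2s-1}).
\]
Therefore
\[
R_a(y+t\theta,y)=t^{-n-2s}\Bigl(a(\theta)\int_\Omega w_y+O(t^{-1})\Bigr).
\]
Multiplying $D\equiv0$ by $t^{n+2s}$ and letting $t\to\infty$ yields
\[
a_1(\theta)\Bigl(1+\int_\Omega w^{(1)}_y\Bigr)=a_2(\theta)\Bigl(1+\int_\Omega w^{(2)}_y\Bigr)\quad\text{for all }\theta\in\Sn.
\]
This leaves the scalars $c_j(y)=1+\int_\Omega w^{(j)}_y$ to control.

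\textbf{Main obstacle: the factors $c_j(y)$.} From the limit identity, $a_1=\kappa a_2$ with $\kappa=c_2(y)/c_1(y)$, provided $c_1(y)\neq0$; this must also be checked.

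To show $c_j(y)\to1$, let $y\to\infty$ within $G$. Then $\|K_a(\cdot-y)\|_{L^2(\Omega)}=O(|y|^{-n-2s})$, so $w_y\to0$ and $c_j(y)\to1$. This gives $a_1=a_2$ directly.

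Two checks remain:
\begin{itemize}
\item The identity holds for every $y\in G$. This is fine, since Step 2 gives $D=0$ on all of $G\times G$ off the diagonal.
\item The ellipticity and the bounds on the solution operator are uniform. They depend only on $a_j$, so they do not vary with $y$.
\end{itemize}
If one prefers to avoid sending $y\to\infty$, an alternative is available. The relation $a_1=\kappa a_2$ gives $L_{a_1}=\kappa L_{a_2}$, so the solutions for $a_1$ and $a_2$ coincide. The DN maps then satisfy $\Lambda_{a_1}=\kappa\Lambda_{a_2}$, and equality of the nonzero maps forces $\kappa=1$.
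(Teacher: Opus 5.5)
Your proof is correct. Steps 1--2 match the paper: the kernel $-K_a(x-y)-\ip{R_ak_y^a}{k_x^a}$, its analyticity off the diagonal in $G\times G$, connectedness for $n\ge2$, and the identity theorem.

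The difference is in the far-field step.

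\emph{The paper's route.} It sends both points to infinity together, $x=Rp$ and $y=Rq$ with $p-q=\theta$. The correction term is then bounded by $C\dist(x,\Omega)^{-n-2s}\dist(y,\Omega)^{-n-2s}=O(R^{-2n-4s})$. This is strictly smaller than the direct term $R^{-n-2s}(a_1-a_2)(\theta)$, so a single limit gives $a_1=a_2$. Only boundedness of $a$ is needed to control the correction.

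\emph{Your route.} You keep the source $y$ fixed, so the correction is of the same order $t^{-n-2s}$ as the direct term. This forces the renormalized identity $a_1(\theta)c_1(y)=a_2(\theta)c_2(y)$, with $c_j(y)=1+\int_\Omega w_y^{(j)}$. You then need a first-order expansion of $K_a$ near $\theta$, which uses $C^1$ regularity of $a$. You also need a second limit $y\to\infty$, based on the same decay $\|K_a(\cdot-y)\|_{L^2(\Omega)}=O(\dist(y,\Omega)^{-n-2s})$, to send $c_j(y)\to1$.

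\emph{Comparison.} Your iterated limit costs an extra step. In exchange, it shows what one fixed source point sees: the one-sided far field determines $a$ only up to the scalar $c(y)$. The paper's diagonal scaling removes this factor in one stroke. Your primary argument via $y\to\infty$ is complete and needs no division by $c_1(y)$.

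Two minor remarks.

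\emph{The ``$\kappa$'' alternative.} As written it is not self-contained. It presupposes $c_1(y)\ne0$ and $\Lambda_{a_2}^{W_1,W_2}\neq0$, and the second fact is most easily proved by the same far-field argument.

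\emph{Step 2.} Separate analyticity in $x$ and in $y$ does not by itself give joint analyticity. Joint analyticity does follow, as in the paper, because $R_a(x,y)$ is a continuous bilinear pairing of the analytic $L^2(\Omega)$-valued maps $x\mapsto K_a(x-\cdot)$ and $y\mapsto w_y$. Alternatively, since Step 3 only uses $D(y+t\theta,y)=0$, you can apply the one-variable identity theorem successively on the connected sets $G\setminus\{y\}$ and $G\setminus\{x\}$.
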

	
	\begin{remark}\label{rem:arbitrary-windows-intro}
		Theorem \ref{thm:overlap}, Theorem \ref{thm:separated-main} and Theorem \ref{thm:analytic-main} use three different mechanisms. Overlapping measurements recover the leading kernel from the exterior diagonal singularity. In the finite harmonic separated case, the diagonal singularity is absent, but an algebraic factorization reduces the problem to unique continuation for $(-\Delta)^s$. In the analytic separated case, the off-diagonal DN kernel is real analytic in the exterior variables, and its vanishing propagates by analytic continuation to the far field.
	\end{remark}
	
	\para{Mechanism of the proof}
	The proof of Theorem \ref{thm:separated-main} uses the finite harmonic assumption. If $a\in\mathcal A_m$, the Funk-Hecke formula (see Section \ref{sec:factorization}) implies that there exists a homogeneous polynomial $Q_a$ of degree $2m$ such that
	\begin{equation}\label{eq:factor-intro}
		|\xi|^{2m}A_a(\xi)=|\xi|^{2s}Q_a(\xi).
	\end{equation}
	Equivalently,
	\begin{equation}\label{eq:operator-factor-intro}
		(-\Delta)^mL_a=(-\Delta)^sQ_a(D)
	\end{equation}
	as Fourier multipliers. Here, $Q_a(D)$ is the local constant coefficient differential operator with Fourier symbol $Q_a(\xi)$. This is the only point at which the fractional Laplacian appears. We do not claim that a general stable operator is a fractional Laplacian. Rather, finite harmonicity converts the multiplier after applying the local operator $(-\Delta)^m$.
	
	Let $u_j$ solve \eqref{eq:intro-dirichlet-cases} with $a=a_j$ and common exterior datum $f\in C_c^\infty(W_1)$. Since $f$ vanishes on $W_2$, one has $u_1=u_2=0$ in $W_2$. Equality of the DN maps gives $L_{a_1}u_1=L_{a_2}u_2$ in $W_2$. Applying the local operator $(-\Delta)^m$ and using \eqref{eq:operator-factor-intro}, we obtain 
	\[
	(-\Delta)^s(Q_{a_1}(D)u_1-Q_{a_2}(D)u_2)=0 \quad \text{in }W_2.
	\]
	The expression in parentheses also vanishes in $W_2$ because the operators $Q_{a_j}(D)$ are local. The strong unique continuation property for $(-\Delta)^s$ implies that this expression vanishes in all of $\R^n$. Restricting the identity to $W_1$, where $u_1=u_2=f$, gives $Q_{a_1}(D)f=Q_{a_2}(D)f$ in $W_1$ for all $f\in C_c^\infty(W_1)$. Therefore $Q_{a_1}=Q_{a_2}$, and the finite cosine transform injectivity gives $a_1=a_2$.
	
	The proof of Theorem \ref{thm:analytic-main} uses a different mechanism. For analytic angular densities, the off-diagonal kernel of the DN map is real analytic on the exterior pair domain away from the diagonal. Equality of the restricted DN maps gives equality of these kernels on $W_2\times W_1$. Analytic continuation then gives equality on the unbounded exterior pair component. A far-field limit separates the direct kernel $K_{a_1-a_2}(x-y)$ from the interior correction and recovers $a_1-a_2$.
	
	\para{Relation to other work}
	The finite harmonic proof is related in spirit to entanglement principles for fractional Laplacians \cite{FL24,FKU24,lai2025entanglement,lin2026entanglement}, but the mechanism is different. Entanglement principles separate different fractional powers. Here, all angular modes have the same homogeneity. The finite harmonic class is a favorable resonant case: after multiplying by a local power of $-\Delta$, the angular multiplier becomes local, and one can use the usual fractional unique continuation property. The analytic result is closer in spirit to classical analytic continuation arguments in inverse problems: once the off-diagonal DN kernel is known on a nonempty exterior product set, analyticity propagates this information to a far-field region where the leading kernel can be read off.
	
	There is also a related line of work on leading coefficient determination for nonlocal equations. In the elliptic setting, the Caffarelli--Silvestre extension has been used to relate the fractional Calder\'on problem with variable coefficients to the corresponding local Calder\'on problem \cite{CGRU2023reduction}. On closed Riemannian manifolds, spectral and heat semigroup methods have led to fractional anisotropic Calder\'on results and to the recovery of geometric data from local source-to-solution maps \cite{feizmohammadi2024fractional_closed,FGKU_2025,FKU26Schrodinger,Lin24parabolic}. Related reductions from nonlocal parabolic measurements to local parabolic Cauchy data were developed in \cite{LLU23newreduction}. These works recover local or geometric leading coefficients through extension, semigroup, spectral, or reduction mechanisms. The present paper is different in that the operator is translation invariant, but the leading stable kernel itself is unknown. Thus, the coefficient to be determined is not a local metric or conductivity in an extension problem, but the angular density in the jump kernel. The finite harmonic and analytic arguments below provide two mechanisms for extracting this angular density from exterior measurements.
	
	\para{Organization of the paper}
	Section \ref{sec:preliminaries} gives the functional setting, the well-posedness of the exterior value problem, and the restricted DN map. Section \ref{sec:factorization} proves the finite harmonic factorization and the injectivity results for the associated cosine transform. Section \ref{sec:analytic-kernel} proves the off-diagonal kernel representation and the analytic continuation tools used for Theorem \ref{thm:analytic-main}. Section \ref{sec:further} is devoted to the inverse problems: it records the leading coefficient Alessandrini identity, discusses the structural obstruction for general smooth angular densities, and proves the three recovery theorems in Subsection \ref{sec:proofs}.

	\section{Function spaces, stable operators and exterior measurements}\label{sec:preliminaries}
	
	In this section, we fix the functional framework for the exterior value problem and for the restricted DN maps. Since the finite harmonic class considered in this paper has full angular support, we work with the full exterior region $\Omega_e=\R^n\setminus\overline\Omega$.
	
	\subsection{Fractional Sobolev spaces}
	We use the unitary Fourier transform
	\[
	\widehat u(\xi)=(2\pi)^{-n/2}\int_{\R^n}e^{-\mathsf ix\cdot\xi}u(x)\,dx,
	\]
	where $\mathsf i=\sqrt{-1}$. For $t\in\R$, let $H^t(\R^n)$ be the standard $L^2$-based Sobolev space with norm 
	\[
	\|u\|_{H^t(\R^n)}=\|(1+|\xi|^2)^{t/2}\widehat u(\xi)\|_{L^2(\R^n)}.
	\]
	If $U\subset\R^n$ is open, define $H^t(U):=\{u|_U:u\in H^t(\R^n)\}$ and $\widetilde H^t(U):=\overline{C_c^\infty(U)}^{H^t(\R^n)}$. The space $H^t(U)$ is equipped with the quotient norm
	\[
	\|v\|_{H^t(U)}=\inf\{\|V\|_{H^t(\R^n)}:V\in H^t(\R^n),\ V|_U=v\}.
	\]
	For $t\ge0$, we set $H^{-t}(U):=(\widetilde H^t(U))^*$. We identify elements of $\widetilde H^t(U)$ with their zero extensions to $\R^n$. The duality pairing between $H^{-t}(U)$ and $\widetilde H^t(U)$ is denoted by $\ip{\cdot}{\cdot}_{H^{-t}(U),\widetilde H^t(U)}$. We use the following convention for complex duality pairings. If $F\in H^{-t}(U)$ and $v\in\widetilde H^t(U)$, then $\ip{F}{v}_{H^{-t}(U),\widetilde H^t(U)}$ is linear in $F$ and conjugate-linear in $v$. Equivalently, for $c\in\C$, $\ip{cF}{v}=c\ip{F}{v}$, $\ip{F}{cv}=\overline c\ip{F}{v}$. When the spaces are clear from the context, we simply write $\ip{F}{v}$.
	
	\subsection{The energy form and the operator}
	Let $a\in L^1(\Sn)$ be real-valued and even. For $u,v\in\mathcal S(\R^n)$ define
	\begin{equation}\label{eq:energy-def}
		\mathcal E_a(u,v)=
		\frac12\iint_{\R^n\times\R^n}
		(u(x)-u(y))\overline{(v(x)-v(y))}
		\frac{a((x-y)/|x-y|)}{|x-y|^{n+2s}}\,dx\,dy.
	\end{equation}
	Thus $\mathcal E_a$ is linear in the first argument and conjugate-linear in the second. The following lemma fixes the normalization of $A_a$.
	
	\begin{lemma}\label{lem:fourier-representation}
		Let $a\in L^1(\Sn)$ be real-valued and even. Then, for all $u,v\in\mathcal S(\R^n)$,
		\begin{equation}\label{eq:energy-fourier}
			\mathcal E_a(u,v)=\int_{\R^n}A_a(\xi)\widehat u(\xi)\overline{\widehat v(\xi)}\,d\xi.
		\end{equation}
		If \eqref{eq:ellipticity-intro} holds, then $\mathcal E_a$ extends uniquely to a bounded sesquilinear form on $H^s(\R^n)\times H^s(\R^n)$ and
		\begin{equation}\label{eq:coercive-Hs-seminorm}
			\lambda\int_{\R^n}|\xi|^{2s}|\widehat u(\xi)|^2\,d\xi
			\le
			\mathcal E_a(u,u)
			\le
			\Lambda\int_{\R^n}|\xi|^{2s}|\widehat u(\xi)|^2\,d\xi.
		\end{equation}
	\end{lemma}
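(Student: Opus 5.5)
The plan is to compute the double integral in \eqref{eq:energy-def} by changing variables $y=x+h$. This turns the integrand into $|u(x)-u(x+h)|$-type differences weighted by $K_a(h)$, so the form becomes
\[
\mathcal E_a(u,v)=\frac12\int_{\R^n}K_a(h)\int_{\R^n}(u(x)-u(x+h))\overline{(v(x)-v(x+h))}\,dx\,dh .
\]
Before doing so, we will check absolute integrability. For Schwartz functions, $|u(x)-u(x+h)|$ is bounded by $\min(|h|\,\|\nabla u\|,2\|u\|)$ in a suitable $L^2_x$ sense. Hence the inner integral is $O(\min(|h|^2,1))$. Since $a\in L^1(\Sn)$ and $0<s<1$, the function $\min(|h|^2,1)|h|^{-n-2s}|a(h/|h|)|$ is integrable in polar coordinates. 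Fubini is therefore justified and no principal value is needed.

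Next, we will apply Plancherel to the inner integral. The Fourier transform of $x\mapsto u(x)-u(x+h)$ is $(1-e^{\mathsf i h\cdot\xi})\widehat u(\xi)$. The inner integral therefore equals
\[
\int|1-e^{\mathsf ih\cdot\xi}|^2\widehat u\,\overline{\widehat v}\,d\xi=\int 2(1-\cos(h\cdot\xi))\,\widehat u\,\overline{\widehat v}\,d\xi .
\]
Exchanging the order of integration again is justified by the same integrability bound, now using $|1-\cos t|\le\min(t^2/2,2)$ together with $\widehat u\,\overline{\widehat v}\in L^1(\langle\xi\rangle^{2}d\xi)$. This gives \eqref{eq:energy-fourier} with multiplier
\[
\int_{\R^n}(1-\cos(h\cdot\xi))K_a(h)\,dh .
\]
In polar coordinates $h=r\theta$, this multiplier becomes
\[
\int_{\Sn}a(\theta)\int_0^\infty(1-\cos(r\,\xi\cdot\theta))r^{-1-2s}\,dr\,d\theta .
\]
Substituting $t=r|\xi\cdot\theta|$ turns the radial integral into $|\xi\cdot\theta|^{2s}\int_0^\infty(1-\cos t)t^{-1-2s}\,dt$. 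The last integral is a finite positive constant for $0<s<1$. This identifies $A_a$ in \eqref{eq:symbol-intro}, with $c_{n,s}$ being exactly this constant, which is the normalization the lemma fixes. Evenness of $a$ is used only to make the form Hermitian symmetric and real on the diagonal; the computation itself does not need it.

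For the second part, we will use \eqref{eq:ellipticity-intro} in \eqref{eq:energy-fourier} with $u=v$, which gives \eqref{eq:coercive-Hs-seminorm} immediately on $\mathcal S(\R^n)$. Boundedness follows from Cauchy--Schwarz:
\[
|\mathcal E_a(u,v)|\le\Lambda\,\||\xi|^s\widehat u\|_{L^2}\,\||\xi|^s\widehat v\|_{L^2}\le\Lambda\|u\|_{H^s}\|v\|_{H^s}.
\]
By density of $\mathcal S$ in $H^s(\R^n)$, the form extends uniquely to a bounded sesquilinear form. The extension is still given by the right side of \eqref{eq:energy-fourier}, so the two-sided bound persists on $H^s(\R^n)$.

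The main obstacle is the first Fubini/Plancherel exchange. When $a$ is merely $L^1$ and not bounded, the estimate must be organized so that the angular integral is kept separate from the radial one. That is, we bound the inner $x$-integral uniformly in the direction $\theta$ and then integrate $|a(\theta)|$ over $\Sn$.
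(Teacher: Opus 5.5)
Your proposal is correct and follows the same route as the paper's proof: a translation change of variables, Plancherel with $|1-e^{\pm\mathsf i t}|^2=2(1-\cos t)$, polar coordinates with the rescaling $t=r|\xi\cdot\theta|$, and then ellipticity plus density of $\mathcal S(\R^n)$ for the extension and the two-sided bound. Your absolute-integrability bounds (uniform in the direction, then integrated against $|a|\in L^1(\Sn)$) justify the two Fubini exchanges that the paper only asserts. One small quibble concerns your aside on evenness: real-valuedness of $a$ alone already makes the double-integral form Hermitian and real on the diagonal, and after $y=x+h$ the weight is literally $K_a(-h)$, which becomes $K_a(h)$ either by evenness or by the substitution $h\mapsto-h$.
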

	
	\begin{proof}
		Let $K_a$ be defined by \eqref{eq:kernel-intro}. Since $a$ is even, $K_a(z)=K_a(-z)$. With the change of variables $z=x-y$,
		\[
		\mathcal E_a(u,v)
		=
		\frac12\int_{\R^n}\int_{\R^n}
		(u(x)-u(x-z))\overline{(v(x)-v(x-z))}
		K_a(z)\,dx\,dz.
		\]
		By Plancherel's formula, there holds 
		\[
		\int_{\R^n}(u(x)-u(x-z))\overline{(v(x)-v(x-z))}\,dx=\int_{\R^n}|1-e^{-\mathsf iz\cdot\xi}|^2\widehat u(\xi)\overline{\widehat v(\xi)}\,d\xi.
		\]
		Since $|1-e^{-\mathsf it}|^2=2(1-\cos t)$, Fubini gives
		\[
		\mathcal E_a(u,v)=\int_{\R^n}\bigg(\int_{\R^n}(1-\cos(z\cdot\xi))K_a(z)\,dz\bigg)
		\widehat u(\xi)\overline{\widehat v(\xi)}\,d\xi.
		\]
		Passing to polar coordinates $z=r\theta$ in the inner integral gives
		\[
		\int_{\R^n}(1-\cos(z\cdot\xi))K_a(z)\,dz=\int_{\Sn}a(\theta)\int_0^\infty
		\frac{1-\cos(r\,\xi\cdot\theta)}{r^{1+2s}}\,dr\,d\theta.
		\]
		For each fixed $\theta$, the one-dimensional integral is equal to $|\xi\cdot\theta|^{2s}\int_0^\infty\frac{1-\cos \rho}{\rho^{1+2s}}\,d\rho$,
		with the obvious value $0$ when $\xi\cdot\theta=0$. Hence
		\[
		\int_{\R^n}(1-\cos(z\cdot\xi))K_a(z)\,dz=\bigg(\int_0^\infty\frac{1-\cos \rho}{\rho^{1+2s}}\,d\rho\bigg)\int_{\Sn}|\xi\cdot\theta|^{2s}a(\theta)\,d\theta.
		\]
		The one-dimensional constant is positive and finite for $0<s<1$, and it is absorbed into $c_{n,s}$. This proves \eqref{eq:energy-fourier}. The inequalities \eqref{eq:coercive-Hs-seminorm} follow from \eqref{eq:ellipticity-intro}. The upper bound and the density of $\mathcal S(\R^n)$ in $H^s(\R^n)$ give the extension of $\mathcal E_a$ to $H^s(\R^n)\times H^s(\R^n)$.
	\end{proof}
	
	For $u\in H^s(\R^n)$, the distribution $L_a u\in H^{-s}(\R^n)$ is defined by
	\begin{equation}\label{eq:operator-duality}
		\ip{L_a u}{\varphi}:=\mathcal E_a(u,\varphi),\quad \varphi\in H^s(\R^n).
	\end{equation}
	By Lemma \ref{lem:fourier-representation}, this agrees with the Fourier multiplier with symbol $A_a$.
	
	\begin{lemma}[Fractional Poincar\'e inequality]\label{lem:poincare}
		Let $\Omega\subset\R^n$ be bounded. There is a constant $C=C(n,s,\Omega)>0$ such that
		\begin{equation}\label{eq:poincare}
			\|w\|_{L^2(\R^n)}
			\le C\bigg(\int_{\R^n}|\xi|^{2s}|\widehat w(\xi)|^2\,d\xi\bigg)^{1/2},
			\quad w\in\widetilde H^s(\Omega).
		\end{equation}
	\end{lemma}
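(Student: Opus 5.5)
The plan is to prove the estimate on the Fourier side, splitting frequencies into a low-frequency ball and its complement. For $w\in\widetilde H^s(\Omega)$ we can first take $w\in C_c^\infty(\Omega)$; the general case then follows by density, because both sides of \eqref{eq:poincare} are continuous in the $H^s(\R^n)$ norm. Fix $R>0$ with $\Omega\subset B_R(0)$, which is possible since $\Omega$ is bounded. By Plancherel,
\[
\|w\|_{L^2}^2=\int_{|\xi|\le\rho}|\widehat w|^2\,d\xi+\int_{|\xi|>\rho}|\widehat w|^2\,d\xi ,
\]
where $\rho>0$ is a parameter chosen below. On the high-frequency region we have $1\le \rho^{-2s}|\xi|^{2s}$, so the second term is at most $\rho^{-2s}\int|\xi|^{2s}|\widehat w|^2\,d\xi$.

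The low-frequency term is the main obstacle. Here the support of $w$ has to be used; otherwise the inequality fails, for example for dilations of a fixed function. Since $\supp w\subset B_R$, Cauchy--Schwarz gives
\[
|\widehat w(\xi)|\le(2\pi)^{-n/2}\|w\|_{L^1}\le(2\pi)^{-n/2}|B_R|^{1/2}\|w\|_{L^2}
\]
for every $\xi$. Hence
\[
\int_{|\xi|\le\rho}|\widehat w|^2\,d\xi\le(2\pi)^{-n}|B_R|\,|B_\rho|\,\|w\|_{L^2}^2=c_n(R\rho)^n\|w\|_{L^2}^2 .
\]
Now choose $\rho$ so that $c_n(R\rho)^n=\tfrac12$. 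The low-frequency term can then be absorbed into the left-hand side, which gives
\[
\tfrac12\|w\|_{L^2}^2\le\rho^{-2s}\int|\xi|^{2s}|\widehat w|^2\,d\xi ,
\]
that is, \eqref{eq:poincare} with $C=\sqrt2\,\rho^{-s}$, a constant depending only on $n$, $s$ and $R$, hence on $\Omega$.

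For the density step, note that $w\mapsto\big(\int|\xi|^{2s}|\widehat w|^2\big)^{1/2}$ is dominated by $\|w\|_{H^s(\R^n)}$. Both sides of \eqref{eq:poincare} therefore pass to limits along the approximating sequence in $C_c^\infty(\Omega)$ defining $\widetilde H^s(\Omega)$. Strictly speaking, the estimate only needs $\supp w\subset\overline{B_R}$, which is preserved under $H^s$ limits. So one could even work directly with $w\in\widetilde H^s(\Omega)$, since then $w\in L^2$ has compact support and $\widehat w$ is bounded by the same argument, and skip the density step.
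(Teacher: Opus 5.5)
Your argument is correct, but it takes a different route from the paper. The paper works in physical space. For $w\in C_c^\infty(\Omega)$ with $\Omega\subset B_R(0)$, it bounds the Gagliardo double integral from below by $2\int_\Omega|w(x)|^2\int_{\R^n\setminus B_{2R}(0)}|x-y|^{-n-2s}\,dy\,dx$, using that $w(y)=0$ there. It then notes that the inner integral is bounded below uniformly in $x\in\Omega$. Finally, it passes to $\int|\xi|^{2s}|\widehat w|^2\,d\xi$ through the equivalence of the Gagliardo seminorm with the Fourier seminorm, which is, up to a constant, the case $a\equiv1$ of Lemma \ref{lem:fourier-representation}.

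You instead stay entirely on the Fourier side. Compact support gives the uniform bound $|\widehat w|\le(2\pi)^{-n/2}|B_R|^{1/2}\|w\|_{L^2}$. With $\rho\sim R^{-1}$ this makes the low-frequency mass at most $\tfrac12\|w\|_{L^2}^2$, which you absorb.

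What each route buys:
\begin{itemize}
\item Your version avoids the Gagliardo--Fourier equivalence and gives an explicit constant $C=\sqrt2\,\rho^{-s}\sim R^s$. This is the same scaling the paper's argument gives implicitly, since its lower bound is of order $R^{-2s}$.
\item Your version works verbatim for every $s>0$ and for any $\Omega$ of finite measure, not only bounded ones.
\item The paper's version is tied to $0<s<1$, where the Gagliardo seminorm makes sense. Its mechanism is the long-range interaction between $\Omega$ and the far exterior, which matches the energy form $\mathcal E_a$ used throughout the paper.
\end{itemize}

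Your closing remark is also correct: vanishing a.e.\ outside $\overline{B_R}$ is preserved under $L^2$ limits. So you can work directly with $w\in\widetilde H^s(\Omega)$ and skip the density step.
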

	
	\begin{proof}
		It is enough to prove the estimate for $w\in C_c^\infty(\Omega)$ and then pass to the closure. Choose $R>0$ such that $\Omega\subset B_R(0)$. Since $w=0$ in $\R^n\setminus\Omega$,
		\[
		\iint_{\R^n\times\R^n}
		\frac{|w(x)-w(y)|^2}{|x-y|^{n+2s}}\,dx\,dy
		\ge
		2\int_\Omega\int_{\R^n\setminus B_{2R}(0)}
		\frac{|w(x)|^2}{|x-y|^{n+2s}}\,dy\,dx.
		\]
		If $x\in\Omega$ and $y\in\R^n\setminus B_{2R}(0)$, then $|x-y|\le |x|+|y|\le R+|y|\le 3|y|/2$. Hence
		\[
		\int_{\R^n\setminus B_{2R}(0)}\frac{dy}{|x-y|^{n+2s}}\ge c(n,s,R)>0.
		\]
		It follows that the Gagliardo seminorm of $w$ controls $\|w\|_{L^2(\R^n)}$. Since the Gagliardo seminorm is equivalent to $\left(\int_{\R^n}|\xi|^{2s}|\widehat w(\xi)|^2\,d\xi\right)^{1/2}$, the estimate follows.
	\end{proof}
	
	\subsection{Well-posedness and the DN map}
	Let $F\in H^s(\R^n)$. We say that $u_F\in H^s(\R^n)$ solves the exterior Dirichlet problem with representative $F$ if
	\begin{equation}\label{eq:weak-dirichlet}
		u_F-F\in\widetilde H^s(\Omega),
		\quad
		\mathcal E_a(u_F,\phi)=0\quad\text{for all }\phi\in\widetilde H^s(\Omega).
	\end{equation}
	In other words, $u_F$ is a weak solution of
	\begin{equation}\label{eq:dirichlet-cases-section2}
		\begin{cases}
			L_a u_F=0 & \text{in }\Omega,\\
			u_F=F & \text{in }\Omega_e.
		\end{cases}
	\end{equation}
	
	\begin{proposition}\label{prop:wellposed}
		Assume \eqref{eq:ellipticity-intro}. For every $F\in H^s(\R^n)$ there is a unique $u_F\in H^s(\R^n)$ satisfying \eqref{eq:weak-dirichlet}. Moreover,
		\begin{equation}\label{eq:solution-estimate}
			\|u_F\|_{H^s(\R^n)}\le C\|F\|_{H^s(\R^n)},
		\end{equation}
		where $C$ depends only on $n$, $s$, $\Omega$ and the ellipticity constants. If $F_1-F_2\in\widetilde H^s(\Omega)$, then $u_{F_1}=u_{F_2}$.
	\end{proposition}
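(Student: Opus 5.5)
The plan is the standard Lax--Milgram reduction. Write $u_F=F+w$ with $w\in\widetilde H^s(\Omega)$, so that \eqref{eq:weak-dirichlet} becomes the problem of finding $w\in\widetilde H^s(\Omega)$ with
\[
\mathcal E_a(w,\phi)=-\mathcal E_a(F,\phi)\quad\text{for all }\phi\in\widetilde H^s(\Omega).
\]
By Lemma \ref{lem:fourier-representation}, $\mathcal E_a$ is bounded on $H^s(\R^n)\times H^s(\R^n)$. Indeed, the upper bound in \eqref{eq:coercive-Hs-seminorm} together with Cauchy--Schwarz in the Fourier representation \eqref{eq:energy-fourier} gives $|\mathcal E_a(u,v)|\le\Lambda\|u\|_{H^s(\R^n)}\|v\|_{H^s(\R^n)}$, since $A_a\ge0$. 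Hence the right-hand side $\phi\mapsto-\overline{\mathcal E_a(F,\phi)}$ is a bounded conjugate-linear functional on $\widetilde H^s(\Omega)$ (or linear, depending on the convention). Its norm is at most $\Lambda\|F\|_{H^s(\R^n)}$.

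The key step is coercivity of $\mathcal E_a$ on the closed subspace $\widetilde H^s(\Omega)\subset H^s(\R^n)$. For $w\in\widetilde H^s(\Omega)$, the lower bound in \eqref{eq:coercive-Hs-seminorm} gives $\mathcal E_a(w,w)\ge\lambda\int|\xi|^{2s}|\widehat w|^2\,d\xi$. The fractional Poincar\'e inequality, Lemma \ref{lem:poincare}, bounds $\|w\|_{L^2}^2$ by $C^2$ times the same quantity. Since $(1+|\xi|^2)^s\le 1+|\xi|^{2s}$ for $0<s<1$, we get
\[
\|w\|_{H^s(\R^n)}^2\le \|w\|_{L^2}^2+\int|\xi|^{2s}|\widehat w|^2\,d\xi\le (1+C^2)\lambda^{-1}\mathcal E_a(w,w).
\]
Also, $\mathcal E_a(w,w)$ is real and nonnegative because $a$ is real and $A_a$ is real. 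The complex Lax--Milgram theorem (Riesz representation for the Hermitian form, which here is even an equivalent inner product on $\widetilde H^s(\Omega)$) then yields a unique $w$, with
\[
\|w\|_{H^s}\le (1+C^2)\lambda^{-1}\Lambda\|F\|_{H^s}.
\]
Hence $\|u_F\|_{H^s}\le\|F\|_{H^s}+\|w\|_{H^s}$ gives \eqref{eq:solution-estimate}, with a constant depending only on $n,s,\Omega,\lambda,\Lambda$.

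Uniqueness of $u_F$ follows by the same coercivity. If $u,\tilde u$ both satisfy \eqref{eq:weak-dirichlet}, then $d=u-\tilde u\in\widetilde H^s(\Omega)$ and $\mathcal E_a(d,\phi)=0$ for all $\phi$. Taking $\phi=d$ gives $d=0$.

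For the last claim, suppose $F_1-F_2\in\widetilde H^s(\Omega)$. Then $u_{F_1}-F_2=(u_{F_1}-F_1)+(F_1-F_2)\in\widetilde H^s(\Omega)$, and $u_{F_1}$ satisfies the variational identity. So $u_{F_1}$ solves \eqref{eq:weak-dirichlet} with representative $F_2$, and uniqueness gives $u_{F_1}=u_{F_2}$.

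The only mildly delicate point is the coercivity step. There one must combine the homogeneous seminorm bound from ellipticity with the Poincar\'e inequality, which uses the boundedness of $\Omega$, to control the full inhomogeneous $H^s$ norm. Everything else is routine.
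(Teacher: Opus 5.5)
Your proposal is correct and follows essentially the same route as the paper: the reduction $u_F=F+w$ with $w\in\widetilde H^s(\Omega)$, coercivity of $\mathcal E_a$ on $\widetilde H^s(\Omega)$ from the ellipticity lower bound combined with the fractional Poincar\'e inequality, Lax--Milgram, and the same argument that the solution does not depend on the choice of representative. Your explicit constant tracking, using $(1+|\xi|^2)^s\le 1+|\xi|^{2s}$, fills in details the paper leaves implicit.
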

	
	\begin{proof}
		Write $u_F=F+w$ with $w\in\widetilde H^s(\Omega)$. The weak equation is equivalent to
		\begin{equation}\label{eq:lax-milgram-eq}
			\mathcal E_a(w,\phi)=-\mathcal E_a(F,\phi),
			\quad \phi\in\widetilde H^s(\Omega).
		\end{equation}
		The right-hand side is a bounded conjugate-linear functional on $\widetilde H^s(\Omega)$. The form $\mathcal E_a$ is bounded on $\widetilde H^s(\Omega)$ and coercive there. Indeed, by \eqref{eq:coercive-Hs-seminorm} and Lemma \ref{lem:poincare}, there is $c>0$ such that $\mathcal E_a(w,w)\ge c\|w\|_{H^s(\R^n)}^2$ for all $w\in\widetilde H^s(\Omega)$. The Lax-Milgram theorem gives a unique $w\in\widetilde H^s(\Omega)$ satisfying \eqref{eq:lax-milgram-eq}. The same estimates imply $\|w\|_{H^s(\R^n)}\le C\|F\|_{H^s(\R^n)}$, and hence \eqref{eq:solution-estimate}.
		
		It remains to check that the solution depends only on the exterior class of $F$. Suppose that $F_1-F_2\in\widetilde H^s(\Omega)$ and that $u_{F_1}$ solves \eqref{eq:weak-dirichlet} with representative $F_1$. Then $u_{F_1}-F_2=(u_{F_1}-F_1)+(F_1-F_2)\in\widetilde H^s(\Omega)$, and the weak equation is the same. By uniqueness, $u_{F_1}=u_{F_2}$.
	\end{proof}

	Let $W_1,W_2\Subset\Omega_e$ be open. We regard $f\in\widetilde H^s(W_1)$ as an element of $H^s(\R^n)$ by zero extension. Let $u_f$ denote the solution from Proposition \ref{prop:wellposed}. Define
	\begin{equation}\label{eq:DNmap-def}
		\Lambda_a^{W_1,W_2}:\widetilde H^s(W_1)\to H^{-s}(W_2)
	\end{equation}
	by
	\begin{equation}\label{eq:DN-pairing}
		\ip{\Lambda_a^{W_1,W_2}f}{g}_{H^{-s}(W_2),\widetilde H^s(W_2)}
		:=
		\mathcal E_a(u_f,g),
		\quad g\in\widetilde H^s(W_2).
	\end{equation}
	
	\begin{lemma}\label{lem:DN-bounded}
		The operator \eqref{eq:DNmap-def} is bounded. If $f\in C_c^\infty(W_1)$, then
		\begin{equation}\label{eq:DN-distributional}
			\ip{\Lambda_a^{W_1,W_2}f}{g}=\ip{L_a u_f}{g}_{\mathcal D'(W_2),C_c^\infty(W_2)},
			\quad g\in C_c^\infty(W_2).
		\end{equation}
		Consequently, equality of two restricted DN maps implies equality of the corresponding distributions $L_{a_j}u_f^{(j)}$ in $W_2$ for every common exterior datum $f\in C_c^\infty(W_1)$.
	\end{lemma}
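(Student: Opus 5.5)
Three things need to be checked. First, boundedness. Second, the identification of the pairing with the distribution $L_a u_f$ tested against $C_c^\infty(W_2)$. Third, the consequence about equal DN maps.

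\emph{Boundedness.} For $f\in\wt H^s(W_1)$ and $g\in\wt H^s(W_2)$, the upper bound in \eqref{eq:coercive-Hs-seminorm} together with the Cauchy--Schwarz inequality applied to the Fourier representation \eqref{eq:energy-fourier} gives
\[
|\mathcal E_a(u_f,g)|\le \Lambda\|u_f\|_{H^s(\R^n)}\|g\|_{H^s(\R^n)}.
\]
Proposition \ref{prop:wellposed} with $F=f$ (zero extension) gives $\|u_f\|_{H^s(\R^n)}\le C\|f\|_{H^s(\R^n)}$. Since $f\mapsto u_f$ is linear, by uniqueness, and $\mathcal E_a(u_f,\cdot)$ is conjugate-linear, the map $g\mapsto \mathcal E_a(u_f,g)$ is a bounded conjugate-linear functional on $\wt H^s(W_2)$. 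It therefore defines an element of $H^{-s}(W_2)=(\wt H^s(W_2))^*$ under the stated pairing convention. Its norm is at most $C\Lambda\|f\|_{\wt H^s(W_1)}$, and this is the boundedness claim.

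\emph{Distributional identity.} Let $f\in C_c^\infty(W_1)$ and $g\in C_c^\infty(W_2)\subset\wt H^s(W_2)$. By the definition \eqref{eq:operator-duality}, $L_a u_f\in H^{-s}(\R^n)$ acts on $g\in H^s(\R^n)$ by $\ip{L_a u_f}{g}=\mathcal E_a(u_f,g)$. The restriction of an $H^{-s}(\R^n)$ distribution to $W_2$ is obtained by testing against $C_c^\infty(W_2)$. Hence \eqref{eq:DN-distributional} is the definition \eqref{eq:DN-pairing} read through \eqref{eq:operator-duality}. The only care needed is that the sesquilinear convention on the right-hand side matches: the paper's $\mathcal D'$ pairing is taken conjugate-linear in the test function.

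\emph{Consequence.} If $\Lambda_{a_1}^{W_1,W_2}f=\Lambda_{a_2}^{W_1,W_2}f$, then for each $f\in C_c^\infty(W_1)$ the identity \eqref{eq:DN-distributional} gives $\ip{L_{a_1}u_f^{(1)}-L_{a_2}u_f^{(2)}}{g}=0$ for all $g\in C_c^\infty(W_2)$. This is exactly the statement $L_{a_1}u_f^{(1)}=L_{a_2}u_f^{(2)}$ in $\mathcal D'(W_2)$.

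None of these steps is a real obstacle. The only point requiring attention is that $u_f$ is independent of the chosen representative: Proposition \ref{prop:wellposed} guarantees this, so the map is well defined on $\wt H^s(W_1)$.
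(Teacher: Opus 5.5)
Your proposal is correct and is essentially the paper's proof: boundedness follows from the upper bound in Lemma \ref{lem:fourier-representation} combined with the estimate \eqref{eq:solution-estimate} of Proposition \ref{prop:wellposed}, and the identity \eqref{eq:DN-distributional} is just the definition \eqref{eq:operator-duality} of $L_a u_f$ tested against $g\in C_c^\infty(W_2)$. Your remarks on linearity and the antilinear pairing convention are harmless elaborations, and the representative-independence point is not actually needed, since $f\in\wt H^s(W_1)$ enters through its canonical zero extension.
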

	
	\begin{proof}
		By Lemma \ref{lem:fourier-representation} and Proposition \ref{prop:wellposed},
		\[
		|\mathcal E_a(u_f,g)|
		\le
		C\|u_f\|_{H^s(\R^n)}\|g\|_{H^s(\R^n)}
		\le
		C\|f\|_{\widetilde H^s(W_1)}\|g\|_{\widetilde H^s(W_2)}.
		\]
		This proves boundedness. Formula \eqref{eq:DN-distributional} is the definition \eqref{eq:operator-duality} of $L_a u_f$ tested against functions supported in $W_2$.
	\end{proof}
	
	\begin{lemma}[Restriction to smaller open sets]\label{lem:restriction-smaller-windows}
		Let $U_1\Subset W_1$ and $U_2\Subset W_2$ be nonempty open sets. If
		\[
		\Lambda_{a_1}^{W_1,W_2}f=\Lambda_{a_2}^{W_1,W_2}f
		\quad \text{for all }f\in\widetilde H^s(W_1),
		\]
		then
		\[
		\Lambda_{a_1}^{U_1,U_2}f=\Lambda_{a_2}^{U_1,U_2}f
		\quad \text{for all }f\in\widetilde H^s(U_1).
		\]
	\end{lemma}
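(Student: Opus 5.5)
The restricted maps are defined by the same energy pairing. So the natural route is to show that $\Lambda_a^{U_1,U_2}f$ is just $\Lambda_a^{W_1,W_2}f$ with the datum viewed in $W_1$ and the test function viewed in $W_2$.

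First, the inclusion $C_c^\infty(U_1)\subset C_c^\infty(W_1)$ gives $\widetilde H^s(U_1)\subset\widetilde H^s(W_1)$ after taking $H^s(\R^n)$-closures. The zero extensions coincide, since both are the same element of $H^s(\R^n)$. The same holds for $U_2\subset W_2$.

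Next, fix $f\in\widetilde H^s(U_1)$ and let $u_f^{(j)}$ be the solution from Proposition \ref{prop:wellposed} with representative $F=f$ and density $a_j$. This solution depends only on $f$ as an element of $H^s(\R^n)$. It is therefore the same function whether $f$ is regarded as data in $U_1$ or in $W_1$.

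Then, for every $g\in\widetilde H^s(U_2)\subset\widetilde H^s(W_2)$, the definition \eqref{eq:DN-pairing} gives
\[
\ip{\Lambda_{a_j}^{U_1,U_2}f}{g}_{H^{-s}(U_2),\widetilde H^s(U_2)}=\mathcal E_{a_j}(u_f^{(j)},g)=\ip{\Lambda_{a_j}^{W_1,W_2}f}{g}_{H^{-s}(W_2),\widetilde H^s(W_2)}.
\]
By hypothesis the right-hand sides agree for $j=1,2$. Hence the two functionals $\Lambda_{a_j}^{U_1,U_2}f\in H^{-s}(U_2)=(\widetilde H^s(U_2))^*$ agree on all of $\widetilde H^s(U_2)$, and so they are equal.

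There is no real obstacle here. The only point needing care is the identification of $\widetilde H^s(U_i)$ with a subspace of $\widetilde H^s(W_i)$ through zero extension. With that in place, the restricted DN map on the smaller sets is exactly the restriction of the larger one to $\widetilde H^s(U_1)$, paired only against test functions supported in $U_2$.
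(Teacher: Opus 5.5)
Your proof is correct and follows the same route as the paper. Zero extension embeds $\widetilde H^s(U_1)\subset\widetilde H^s(W_1)$ and $\widetilde H^s(U_2)\subset\widetilde H^s(W_2)$, the solution $u_f$ is unchanged, and testing the $W_2$-functional against $\widetilde H^s(U_2)$ recovers the $U_2$-functional. You just spell out this identification through the energy pairing \eqref{eq:DN-pairing} more explicitly than the paper does.
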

	
	\begin{proof}
		The zero extension identifies $\widetilde H^s(U_1)$ with a closed subspace of $\widetilde H^s(W_1)$. Similarly, testing an element of $H^{-s}(W_2)$ against functions in $\widetilde H^s(U_2)$ gives its restriction to $U_2$. Thus, the equality on $W_1,W_2$ restricts to the equality on $U_1,U_2$.
	\end{proof}
	
	\begin{lemma}\label{lem:compact-support}
		Let $f\in C_c^\infty(W_1)$ and let $u_f$ be the solution of \eqref{eq:weak-dirichlet}. Then $\supp u_f\subset \overline\Omega\cup\supp f$.	In particular, $u_f$ is compactly supported.
	\end{lemma}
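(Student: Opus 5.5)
The plan is to read off the support of $u_f$ from the Lax--Milgram decomposition in Proposition \ref{prop:wellposed}. That proof writes $u_f=F+w$, where $w\in\widetilde H^s(\Omega)$ and $F$ is the chosen representative of the exterior datum. Here we take $F=f$, namely the zero extension of $f\in C_c^\infty(W_1)$. By the last assertion of Proposition \ref{prop:wellposed}, the solution does not depend on which representative is used, so this choice is harmless. The solution is then
\[
u_f=f+w,\qquad w\in\widetilde H^s(\Omega).
\]

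The first step is to control $\supp w$. Every element of $\widetilde H^s(\Omega)$ is an $H^s(\R^n)$-limit of functions in $C_c^\infty(\Omega)$. Such limits are also limits in $L^2$, and $L^2$-convergence preserves a.e. vanishing on the open set $\R^n\setminus\overline\Omega$. Hence $w=0$ a.e. on $\R^n\setminus\overline\Omega$, which gives $\supp w\subset\overline\Omega$ in the distributional sense.

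The second step combines the two pieces. Since $\supp f$ is compact in $W_1$, we obtain
\[
\supp u_f\subset \supp f\cup\supp w\subset\overline\Omega\cup\supp f .
\]
Because $\Omega$ is bounded and $\supp f$ is compact, the right-hand side is compact, so $u_f$ is compactly supported.

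There is no substantial obstacle in this argument. The only point needing care is that "support" here means the distributional support of an $H^s$ function, and that the closure defining $\widetilde H^s(\Omega)$ keeps support inside $\overline\Omega$; the $L^2$ argument above handles both.
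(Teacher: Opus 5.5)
Your proof is correct and is essentially the paper's argument: both use that $u_f-f\in\widetilde H^s(\Omega)$ is supported in $\overline\Omega$ and then take the union with $\supp f$. Your $L^2$-limit justification that elements of $\widetilde H^s(\Omega)$ vanish a.e.\ on $\R^n\setminus\overline\Omega$ spells out a step the paper states without comment.
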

	
	\begin{proof}
		The condition $u_f-f\in\widetilde H^s(\Omega)$ implies that $u_f=f$ in $\Omega_e$ in the sense of distributions, and hence a.e. after choosing representatives. Since $f$ is supported in $W_1\subset\Omega_e$, one has $u_f=0$ in $\Omega_e\setminus\supp f$. Also, $u_f-f$ is supported in $\overline\Omega$. This proves the support inclusion. Since $\Omega$ is bounded and $f$ is compactly supported, $u_f$ is compactly supported.
	\end{proof}

	\section{Finite harmonic factorization}\label{sec:factorization}
	
	This section proves the algebraic ingredient used in the inverse problem. If $Y_\ell\in\mathscr H_\ell$, we denote by $P_\ell$ its homogeneous harmonic extension, namely $P_\ell(x)=|x|^\ell Y_\ell(x/|x|)$ for $x\ne0$. Thus, $P_\ell$ is a homogeneous harmonic polynomial of degree $\ell$.
	
	\begin{lemma}[Funk-Hecke multiplier]\label{lem:funk-hecke}
		Let $n\ge2$ and $0<s<1$. For $Y_\ell\in\mathscr H_\ell$, one has
		\begin{equation}\label{eq:FH}
			\int_{\Sn}|\xi\cdot\theta|^{2s}Y_\ell(\theta)\,d\theta
			=
			\gamma_{\ell,s}|\xi|^{2s-\ell}P_\ell(\xi),
			\quad \xi\ne0.
		\end{equation}
		The constant $\gamma_{\ell,s}$ is zero for odd $\ell$ and nonzero for every even $\ell$.
	\end{lemma}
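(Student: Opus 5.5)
The plan is to apply the classical Funk–Hecke theorem to the zonal function $\phi(t)=|t|^{2s}$ on $[-1,1]$, which is integrable against the weight $(1-t^2)^{(n-3)/2}$. The theorem says that for any bounded measurable $\phi$ on $[-1,1]$ and $Y_\ell\in\mathscr H_\ell$,
\[
\int_{\Sn}\phi(\eta\cdot\theta)Y_\ell(\theta)\,d\theta=\mu_\ell\,Y_\ell(\eta),\quad \eta\in\Sn,
\]
with
\[
\mu_\ell=|\mathbb S^{n-2}|\int_{-1}^1\phi(t)\,C_\ell^{(n-2)/2}(t)/C_\ell^{(n-2)/2}(1)\,(1-t^2)^{(n-3)/2}\,dt .
\]
Here $C^{(\nu)}_\ell$ are Gegenbauer polynomials, replaced by Chebyshev polynomials $T_\ell$ when $n=2$. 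For $\xi\neq0$ we write $\xi=|\xi|\eta$. Homogeneity gives $|\xi\cdot\theta|^{2s}=|\xi|^{2s}|\eta\cdot\theta|^{2s}$, so the left side of \eqref{eq:FH} equals $\mu_\ell|\xi|^{2s}Y_\ell(\eta)=\mu_\ell|\xi|^{2s-\ell}P_\ell(\xi)$. This gives \eqref{eq:FH} with $\gamma_{\ell,s}=\mu_\ell$. The case $n=2$ can also be checked directly with Fourier series: $Y_\ell=\cos(\ell\vartheta),\sin(\ell\vartheta)$, and the coefficient is $\int_0^{2\pi}|\cos\vartheta|^{2s}\cos(\ell\vartheta)\,d\vartheta$.

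Vanishing for odd $\ell$ is immediate from parity: $|t|^{2s}$ is even, while $C_\ell^{(\nu)}$ has parity $(-1)^\ell$. The same follows from $Y_\ell(-\theta)=-Y_\ell(\theta)$ and the substitution $\theta\mapsto-\theta$.

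The main step is showing $\gamma_{\ell,s}\neq0$ for even $\ell$. I will use Rodrigues' formula,
\[
C_\ell^{(\nu)}(t)(1-t^2)^{\nu-1/2}=c_{\ell,\nu}\frac{d^\ell}{dt^\ell}(1-t^2)^{\ell+\nu-1/2}
\]
with $c_{\ell,\nu}\neq0$ and $\nu=(n-2)/2$, and then integrate by parts $\ell$ times. A direct integration by parts would place $\ell$ derivatives on $|t|^{2s}$, which is singular at $0$. To avoid this, I will compute instead with $\phi_z(t)=|t|^{z}$ for $\operatorname{Re}z$ large, where all boundary terms vanish and the derivatives are integrable. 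The resulting integral is then expressed through Beta functions and continued analytically in $z$ down to $z=2s$.

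The expected outcome is the explicit formula
\[
\gamma_{\ell,s}=C_{n,\ell}\,\frac{\Gamma\bigl(\tfrac{2s+1}{2}\bigr)\,\Gamma\bigl(\tfrac{\ell}{2}-s\bigr)}{\Gamma(-s)\,\Gamma\bigl(\tfrac{n+\ell+2s}{2}\bigr)}
\]
up to a nonzero constant $C_{n,\ell}$. This is the classical formula for the spherical Fourier (cosine) transform of $|t|^{2s}$. An alternative route to the same formula is to expand $|t|^{2s}$ in a Gegenbauer series, or to compute $\int_{\R^n}|\xi\cdot x|^{2s}P_\ell(x)e^{-|x|^2/2}\,dx$ in two ways using polar coordinates and Hecke's identity.

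Nonvanishing then reduces to checking that the Gamma factors are finite and nonzero. For $0<s<1$, the quantity $-s$ is not a nonpositive integer, so $1/\Gamma(-s)\neq0$. For even $\ell$, $\ell/2-s$ is not a nonpositive integer, and the remaining arguments are positive. Hence $\gamma_{\ell,s}\neq0$. The real obstacle is carrying out the Gegenbauer/Beta computation carefully enough to confirm that the Gamma-ratio structure is correct. I would first attempt the Gaussian-integral method, because it cleanly produces the Gamma factors.
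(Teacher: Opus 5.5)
Your proposal is correct and matches the paper's proof: reduce to $|\xi|=1$ by homogeneity, apply Funk--Hecke to the zonal kernel $|t|^{2s}$, kill odd $\ell$ by parity, and get nonvanishing for even $\ell$ from the Gamma ratio $\Gamma(s+\tfrac12)\Gamma(\tfrac{\ell}{2}-s)/\bigl(\Gamma(-s)\Gamma(\tfrac{\ell+n+2s}{2})\bigr)$. The one difference is that the paper cites Rubin's eigenvalue formula for the generalized cosine transform, while you derive it. Your Rodrigues-plus-analytic-continuation computation does reproduce that formula, with the sign $(-1)^{\ell/2}$ absorbed into your nonzero constant.
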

	
	\begin{proof}
		It is enough to consider $|\xi|=1$, since both sides of \eqref{eq:FH} are homogeneous of degree $2s$ in $\xi$. For $|\xi|=1$, the function $\theta\mapsto|\xi\cdot\theta|^{2s}$ is zonal with pole $\xi$. By the Funk--Hecke formula, see for instance \cite[Theorem 1.2.9]{DaiXu2013}, there is a scalar $\gamma_{\ell,s}$ depending only on $\ell,n,s$ such that
		\[
		\int_{\Sn}|\xi\cdot\theta|^{2s}Y_\ell(\theta)\,d\theta=\gamma_{\ell,s}Y_\ell(\xi).
		\]
		More explicitly, if $\lambda=(n-2)/2$, $C_\ell^\lambda$ denotes the Gegenbauer polynomial of degree $\ell$, and $\omega_{n-1}=|\mathbb S^{n-2}|$, then
		\[
		\gamma_{\ell,s}=\omega_{n-1}\int_{-1}^{1}|t|^{2s}
		\frac{C_\ell^\lambda(t)}{C_\ell^\lambda(1)}(1-t^2)^{\lambda-1/2}\,dt,
		\]
		with the usual limiting interpretation when $n=2$. Restoring homogeneity and using
		\[
		P_\ell(\xi)=|\xi|^\ell Y_\ell(\xi/|\xi|)
		\]
		gives \eqref{eq:FH}.
		
		If $\ell$ is odd, then $\gamma_{\ell,s}=0$, since the integrand in the last formula is odd. If $\ell$ is even, the eigenvalue formula for the generalized cosine transform, see for instance \cite[Section 2]{Rubin1999}, gives
		\[
		\gamma_{\ell,s}=(-1)^{\ell/2}\frac{2\pi^{(n-1)/2}\Gamma(s+1/2)\Gamma((\ell-2s)/2)}
		{\Gamma(-s)\Gamma((\ell+n+2s)/2)}.
		\]
		Equivalently, up to a nonzero constant depending only on $n$ and $s$, $\gamma_{\ell,s}=(-1)^{\ell/2}\frac{\Gamma((\ell-2s)/2)}{\Gamma((\ell+n+2s)/2)}$.
		The Gamma function has no zeros. Since $0<s<1$, the factor $\Gamma(-s)$ is finite and nonzero, and the numerator $\Gamma((\ell-2s)/2)$ is finite and nonzero for every even $\ell\ge0$: for $\ell=0$ it is $\Gamma(-s)$, while for $\ell\ge2$ it is $\Gamma(\ell/2-s)$, which is not a pole. The denominator is also finite and nonzero. Hence, $\gamma_{\ell,s}\ne0$ for every even $\ell$.
	\end{proof}
	
	\begin{lemma}[Even cosine injectivity]\label{lem:smooth-cosine-injective}
		Let $h\in C^\infty(\Sn)$ be even. If
		\begin{equation}\label{eq:smooth-cosine-zero}
			\int_{\Sn}|\xi\cdot\theta|^{2s}h(\theta)\,d\theta=0
			\quad \text{for all }\xi\in\R^n,
		\end{equation}
		then $h=0$ on $\Sn$.
	\end{lemma}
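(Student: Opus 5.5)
The plan is to expand $h$ in spherical harmonics and use Lemma \ref{lem:funk-hecke} to diagonalize the cosine transform. Since $h\in C^\infty(\Sn)$, it has an $L^2(\Sn)$-convergent (indeed uniformly convergent) expansion $h=\sum_{\ell\ge0}Y_\ell$ with $Y_\ell\in\mathscr H_\ell$, where $Y_\ell$ is the orthogonal projection of $h$ onto $\mathscr H_\ell$. Because $h$ is even and $Y_\ell(-\theta)=(-1)^\ell Y_\ell(\theta)$, all odd-degree components vanish, so only even $\ell$ occur.

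Next, restrict \eqref{eq:smooth-cosine-zero} to $\xi\in\Sn$ and define the operator $T h(\xi):=\int_{\Sn}|\xi\cdot\theta|^{2s}h(\theta)\,d\theta$. This is an integral operator on $\Sn$ with bounded, continuous zonal kernel, hence bounded and self-adjoint on $L^2(\Sn)$. By Lemma \ref{lem:funk-hecke}, $T Y_\ell=\gamma_{\ell,s}Y_\ell$ on $\Sn$. Boundedness of $T$ lets us pass $T$ through the $L^2$-convergent expansion, giving $Th=\sum_{\ell\ \text{even}}\gamma_{\ell,s}Y_\ell$ in $L^2(\Sn)$. Equivalently, pair $Th$ with an arbitrary $Z\in\mathscr H_\ell$: self-adjointness gives $\ip{Th}{Z}_{L^2(\Sn)}=\ip{h}{TZ}=\gamma_{\ell,s}\ip{h}{Z}$. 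Both formulations avoid any convergence subtleties.

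The hypothesis says $Th=0$. Hence $\gamma_{\ell,s}\ip{h}{Z}=0$ for all $Z\in\mathscr H_\ell$ and all $\ell$. For even $\ell$, Lemma \ref{lem:funk-hecke} gives $\gamma_{\ell,s}\ne0$, so the even components $Y_\ell$ vanish. The odd components vanish by parity. By completeness of spherical harmonics in $L^2(\Sn)$, $h=0$ almost everywhere, and since $h$ is continuous, $h=0$ on $\Sn$.

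The only genuinely nontrivial input is the nonvanishing of $\gamma_{\ell,s}$ for every even $\ell$, which is already supplied by Lemma \ref{lem:funk-hecke}; the rest is the routine justification of interchanging $T$ with the expansion. Note that smoothness of $h$ is more than needed, since the argument works for $h\in L^2(\Sn)$.
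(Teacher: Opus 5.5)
Your proposal is correct and follows essentially the same route as the paper: restrict to $\xi\in\Sn$, use the self-adjointness of the cosine transform together with the Funk--Hecke relation $T_sY_{\ell,k}=\gamma_{\ell,s}Y_{\ell,k}$ and $\gamma_{\ell,s}\ne0$ for even $\ell$ to kill the even coefficients, use parity to kill the odd ones, and conclude by completeness of spherical harmonics and continuity of $h$. Your remark that the argument already works for $h\in L^2(\Sn)$ (with the conclusion $h=0$ a.e.) is also correct.
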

	
	\begin{proof}
		Restricting \eqref{eq:smooth-cosine-zero} to $\xi=\omega\in\Sn$, we obtain
		\[
		T_sh(\omega):=\int_{\Sn}|\omega\cdot\theta|^{2s}h(\theta)\,d\theta=0,\quad \omega\in\Sn.
		\]
		Thus, $T_sh=0$ on $\Sn$. The operator $T_s$ is the generalized cosine transform with kernel $|\omega\cdot\theta|^{2s}$. Since this kernel is real-valued and symmetric in $\omega$ and $\theta$, the operator $T_s$ is self-adjoint on $L^2(\Sn)$.
		
		Let $\{Y_{\ell,k}\}_k$ be an orthonormal basis of $\mathscr H_\ell$. We use the $L^2(\Sn)$ inner product, which is linear in the first argument. Since $h$ is even, all of its odd spherical harmonic coefficients vanish. Indeed, if $\ell$ is odd, then $Y_{\ell,k}(-\theta)=-Y_{\ell,k}(\theta)$, and hence,
		\[
		\big(h,Y_{\ell,k}\big)_{L^2(\Sn)}=\int_{\Sn}h(\theta)\overline{Y_{\ell,k}(\theta)}\,d\theta
		=-\int_{\Sn}h(\theta)\overline{Y_{\ell,k}(\theta)}\,d\theta=0.
		\]
		Here we used the change of variables $\theta\mapsto-\theta$ and the evenness of $h$.
		
		It remains to show that the even coefficients also vanish. Let $\ell$ be even. By Lemma \ref{lem:funk-hecke}, each $Y_{\ell,k}$ is an eigenfunction of $T_s$:
		\[
		T_sY_{\ell,k}=\gamma_{\ell,s}Y_{\ell,k},
		\]
		where $\gamma_{\ell,s}\ne0$ for every even $\ell$. Since $T_sh=0$ and $T_s$ is self-adjoint,
		\[
		0=(T_sh,Y_{\ell,k})_{L^2(\Sn)}
		=(h,T_sY_{\ell,k})_{L^2(\Sn)}
		=\gamma_{\ell,s}(h,Y_{\ell,k})_{L^2(\Sn)}.
		\]
		Therefore, $(h,Y_{\ell,k})_{L^2(\Sn)}=0$ for every even $\ell$ and every $k$.
		
		We have shown that all spherical harmonic coefficients of $h$ vanish, both in odd and even degrees. Since spherical harmonics are complete in $L^2(\Sn)$, it follows that $h=0$ in $L^2(\Sn)$. Finally, $h$ is smooth, hence continuous, so $h=0$ pointwise on $\Sn$.
	\end{proof}
	
	\begin{lemma}[Finite cosine injectivity]\label{lem:cosine-injective}
		Let $a\in\mathcal A_m$. If
		\[
		\int_{\Sn}|\xi\cdot\theta|^{2s}a(\theta)\,d\theta=0
		\quad\text{for all }\xi\in\R^n,
		\]
		then $a=0$.
	\end{lemma}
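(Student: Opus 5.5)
The plan is to reduce this to Lemma \ref{lem:smooth-cosine-injective}, or, if a self-contained argument is preferred, to redo its spectral argument directly in the finite-dimensional space $\mathcal A_m$.

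First, $a\in\mathcal A_m$ is a finite sum of spherical harmonics. Each spherical harmonic is the restriction to $\Sn$ of a polynomial, hence $C^\infty$. Every component has even degree $\ell$, and $Y_\ell(-\theta)=(-1)^\ell Y_\ell(\theta)$, so $a$ is even. The hypotheses of Lemma \ref{lem:smooth-cosine-injective} are therefore met with $h=a$, and the conclusion $a=0$ follows immediately. If $a$ is complex-valued, this is still fine, since the proof of that lemma only used the $L^2$ spectral decomposition.

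For a direct argument I would write $a=\sum_{\ell\text{ even},\,\ell\le 2m}Y_\ell$ with $Y_\ell\in\mathscr H_\ell$. By Lemma \ref{lem:funk-hecke}, for $\xi\neq0$,
\[
0=\int_{\Sn}|\xi\cdot\theta|^{2s}a(\theta)\,d\theta=|\xi|^{2s}\sum_{\ell}\gamma_{\ell,s}|\xi|^{-\ell}P_\ell(\xi).
\]
Restricting to $\xi=\omega\in\Sn$ gives $\sum_\ell\gamma_{\ell,s}Y_\ell(\omega)=0$. The spaces $\mathscr H_\ell$ are mutually orthogonal in $L^2(\Sn)$, so each term vanishes separately. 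Since $\gamma_{\ell,s}\ne0$ for every even $\ell$, we get $Y_\ell=0$ for all $\ell$, and hence $a=0$.

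No step here is really an obstacle. The only substantive input is the nonvanishing of $\gamma_{\ell,s}$ for even $\ell$, which Lemma \ref{lem:funk-hecke} already provides.
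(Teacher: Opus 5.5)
Your proposal is correct and follows the paper's proof: the paper also observes that every $a\in\mathcal A_m$ is smooth and even, invokes Lemma~\ref{lem:smooth-cosine-injective}, and mentions the direct alternative via the Funk--Hecke multipliers $\gamma_{\ell,s}\neq 0$ for even $\ell$. Your direct version, which restricts to $\xi\in\Sn$ and uses the orthogonality of the spaces $\mathscr H_\ell$, is a valid self-contained version of that remark.
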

	
	\begin{proof}
		This follows immediately from Lemma \ref{lem:smooth-cosine-injective}, since every $a\in\mathcal A_m$ is smooth and even. In other words, one may write $a=\sum a_{\ell,k}Y_{\ell,k}$ over even $0\le \ell\le 2m$ and use Lemma \ref{lem:funk-hecke} together with the nonvanishing of the even multipliers $\gamma_{\ell,s}$.
	\end{proof}
	
	\begin{lemma}[Finite harmonic factorization]\label{lem:factorization}
		Let $a\in\mathcal A_m$. Then there exists a homogeneous polynomial $Q_a$, of degree $2m$ unless $a\equiv0$, such that
		\begin{equation}\label{eq:factorization-symbol}
			|\xi|^{2m}A_a(\xi)=|\xi|^{2s}Q_a(\xi),
			\quad \xi\in\R^n.
		\end{equation}
		Equivalently,
		\begin{equation}\label{eq:factorization-operator}
			(-\Delta)^mL_a=(-\Delta)^sQ_a(D)
		\end{equation}
		as Fourier multipliers. Moreover, the map $a\mapsto Q_a$ is injective on $\mathcal A_m$.
	\end{lemma}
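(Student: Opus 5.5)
Since $a\in\mathcal A_m$, we first expand it in spherical harmonics as $a=\sum_{\ell\text{ even},\,\ell\le 2m}Y_\ell$ with $Y_\ell\in\mathscr H_\ell$. Lemma \ref{lem:funk-hecke}, applied term by term, gives for $\xi\neq0$
\[
A_a(\xi)=c_{n,s}\sum_{\ell}\gamma_{\ell,s}|\xi|^{2s-\ell}P_\ell(\xi).
\]
Multiplying by $|\xi|^{2m}$ yields
\[
|\xi|^{2m}A_a(\xi)=|\xi|^{2s}\,c_{n,s}\sum_\ell\gamma_{\ell,s}|\xi|^{2m-\ell}P_\ell(\xi).
\]
Because $\ell$ is even and $\ell\le 2m$, the factor $|\xi|^{2m-\ell}=(|\xi|^2)^{(2m-\ell)/2}$ is a homogeneous polynomial of degree $2m-\ell$. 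We therefore define
\[
Q_a(\xi):=c_{n,s}\sum_\ell\gamma_{\ell,s}|\xi|^{2m-\ell}P_\ell(\xi),
\]
which is a homogeneous polynomial of degree $2m$ (or identically zero). The identity \eqref{eq:factorization-symbol} then holds for $\xi\ne0$. At $\xi=0$ both sides vanish: $A_a(0)=0$, and when $m=0$, $s>0$, the factor $|\xi|^{2s}$ vanishes. This settles \eqref{eq:factorization-symbol}.

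The operator identity \eqref{eq:factorization-operator} follows at once on the Fourier side. The multiplier of $(-\Delta)^m$ is $|\xi|^{2m}$, that of $L_a$ is $A_a$ (Lemma \ref{lem:fourier-representation}), and that of $(-\Delta)^s Q_a(D)$ is $|\xi|^{2s}Q_a(\xi)$. Two Fourier multipliers with equal symbols define the same operator on $\mathcal S(\R^n)$, and hence also on the relevant Sobolev spaces.

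For injectivity, suppose $Q_a=0$. Then \eqref{eq:factorization-symbol} gives $|\xi|^{2m}A_a(\xi)=0$, so $A_a(\xi)=0$ for all $\xi\ne0$; it also vanishes at $\xi=0$. Lemma \ref{lem:cosine-injective} then gives $a=0$. Since $a\mapsto Q_a$ is linear, this establishes injectivity. The same argument shows that $Q_a\not\equiv0$ whenever $a\not\equiv0$, so $Q_a$ has exact degree $2m$ as a homogeneous polynomial. One could also argue through the uniqueness of the decomposition $Q=\sum|\xi|^{2j}P_{2m-2j}$ of a homogeneous polynomial into harmonic parts combined with $\gamma_{\ell,s}\ne0$, but the route via Lemma \ref{lem:cosine-injective} is shorter.

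The only step needing genuine care is checking that $|\xi|^{2m-\ell}P_\ell$ really is a polynomial. This is exactly where the evenness restriction in $\mathcal A_m$ and the degree bound $\ell\le 2m$ enter. Everything else is bookkeeping built on Lemma \ref{lem:funk-hecke}.
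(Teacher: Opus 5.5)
Your proposal is correct and follows essentially the same route as the paper: you apply the Funk--Hecke multiplier term by term, use that $\ell$ is even and $\ell\le 2m$ so $|\xi|^{2m-\ell}P_\ell$ is a polynomial when defining $Q_a$, and deduce injectivity from $Q_a=0\Rightarrow A_a=0$ via Lemma~\ref{lem:cosine-injective}. Two small additions not spelled out in the paper's proof, the explicit check at $\xi=0$ and the observation that $Q_a\not\equiv0$ for $a\not\equiv0$ (so $Q_a$ has exact degree $2m$), are both correct.
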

	
	\begin{proof}
		Write $a=\sum_{\ell,k}a_{\ell,k}Y_{\ell,k}$, where the sum is over even $0\le\ell\le2m$ and $\{Y_{\ell,k}\}_k$ is an orthonormal basis of $\mathscr H_\ell$. Let $P_{\ell,k}$ be the homogeneous harmonic extension of $Y_{\ell,k}$. By Lemma \ref{lem:funk-hecke}, for $\xi\ne0$,
		\[
		A_a(\xi)
		=
		\sum_{\substack{0\le\ell\le2m\\ \ell\text{ even}}}\sum_k
		c_{n,s}a_{\ell,k}\gamma_{\ell,s}|\xi|^{2s-\ell}P_{\ell,k}(\xi).
		\]
		Multiplying by $|\xi|^{2m}$ gives
		\[
		|\xi|^{2m}A_a(\xi)
		=
		|\xi|^{2s}
		\sum_{\substack{0\le\ell\le2m\\ \ell\text{ even}}}\sum_k
		c_{n,s}a_{\ell,k}\gamma_{\ell,s}|\xi|^{2m-\ell}P_{\ell,k}(\xi).
		\]
		Since $\ell$ is even, $|\xi|^{2m-\ell}=(|\xi|^2)^{m-\ell/2}$ is a polynomial. We define
		\begin{equation}\label{eq:Qa-definition}
			Q_a(\xi)
			:=
			\sum_{\substack{0\le\ell\le2m\\ \ell\text{ even}}}\sum_k
			c_{n,s}a_{\ell,k}\gamma_{\ell,s}|\xi|^{2m-\ell}P_{\ell,k}(\xi).
		\end{equation}
		Then $Q_a$ is homogeneous of degree $2m$ unless $a\equiv0$, in which case $Q_a\equiv0$. The identity \eqref{eq:factorization-symbol} follows for $\xi\ne0$. Since both sides are locally integrable homogeneous functions and both vanish at $\xi=0$ in the pointwise representative, the identity holds as an equality of Fourier multipliers on $\R^n$.
		
		The operator identity \eqref{eq:factorization-operator} follows from \eqref{eq:factorization-symbol}: the multiplier of $(-\Delta)^mL_a$ is $|\xi|^{2m}A_a(\xi)$, while the multiplier of $(-\Delta)^sQ_a(D)$ is $|\xi|^{2s}Q_a(\xi)$. Finally, if $Q_a=0$, then $A_a=0$ for all $\xi\ne0$. By Lemma \ref{lem:cosine-injective}, this implies $a=0$. Hence $a\mapsto Q_a$ is injective on $\mathcal A_m$.
	\end{proof}
	
	\begin{lemma}[Factorization on Sobolev distributions]\label{lem:factorization-distributions}
		Let $a\in\mathcal A_m$ and let $Q_a$ be as in Lemma \ref{lem:factorization}. If $u\in H^s(\R^n)$, then
		\begin{equation}\label{eq:factorization-distribution}
			(-\Delta)^mL_a u=(-\Delta)^sQ_a(D)u
		\end{equation}
		in $H^{-s-2m}(\R^n)$ and hence in $\mathcal D'(\R^n)$.
	\end{lemma}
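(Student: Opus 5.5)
The plan is to prove \eqref{eq:factorization-distribution} on the Fourier side, using the symbol identity \eqref{eq:factorization-symbol} from Lemma \ref{lem:factorization}. Everything is fixed by the multipliers once we check that each side is a well-defined tempered distribution in the right space.

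\emph{Step 1: membership in $H^{-s-2m}(\R^n)$.} For $u\in H^s(\R^n)$, the ellipticity bound \eqref{eq:ellipticity-intro} gives $|A_a(\xi)|\le\Lambda|\xi|^{2s}\le\Lambda(1+|\xi|^2)^{s}$. Hence $\widehat{L_a u}=A_a\widehat u$ satisfies $(1+|\xi|^2)^{-s/2}A_a\widehat u\in L^2$, so $L_au\in H^{-s}(\R^n)$. This is consistent with the duality definition \eqref{eq:operator-duality} via Lemma \ref{lem:fourier-representation} and density of $\mathcal S$. Multiplying by the polynomial $|\xi|^{2m}\le(1+|\xi|^2)^m$ shows that $(-\Delta)^mL_au\in H^{-s-2m}(\R^n)$, with Fourier transform $|\xi|^{2m}A_a(\xi)\widehat u(\xi)$.

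On the other side, $Q_a$ is a homogeneous polynomial of degree $2m$, so $|Q_a(\xi)|\le C|\xi|^{2m}$. Therefore $Q_a(D)u\in H^{s-2m}(\R^n)$ has transform $Q_a\widehat u$, a locally integrable function of polynomial growth. Applying $(-\Delta)^s$ as the multiplier $|\xi|^{2s}$ gives the function $|\xi|^{2s}Q_a(\xi)\widehat u(\xi)$. It lies in $(1+|\xi|^2)^{(s+2m)/2}L^2$, so it defines an element of $H^{-s-2m}(\R^n)$.

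\emph{Step 2: comparison.} By \eqref{eq:factorization-symbol}, the two Fourier transforms agree pointwise for $\xi\ne0$, hence almost everywhere, hence as elements of the weighted $L^2$ space. Injectivity of the Fourier transform on $\mathcal S'$ gives equality in $H^{-s-2m}(\R^n)$. Since $H^{-s-2m}(\R^n)\hookrightarrow\mathcal S'(\R^n)\hookrightarrow\mathcal D'(\R^n)$, the identity also holds in $\mathcal D'(\R^n)$.

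The only delicate point is Step 1's meaning of $(-\Delta)^s$ acting on $Q_a(D)u$, since $Q_a(D)u$ need not lie in $H^s$. I would define it directly as the multiplier on functions whose transforms are $L^2_{\mathrm{loc}}$ with polynomially weighted $L^2$ bounds. I would also check that this is consistent with the paper's later use, namely pairing with test functions via $\langle (-\Delta)^s w,\varphi\rangle=\int|\xi|^{2s}\widehat w\,\overline{\widehat\varphi}\,d\xi$. The singularity of $|\xi|^{2s}$ at the origin is harmless because it is bounded near $0$.

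A cleaner alternative avoids this issue. Approximate $u$ by $u_k\in\mathcal S(\R^n)$ in $H^s$. For Schwartz functions the identity is just multiplication of symbols. Both sides are continuous maps $H^s\to H^{-s-2m}$ by the bounds in Step 1, so the identity passes to the limit.
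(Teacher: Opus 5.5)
Your proof is correct and follows the paper's argument: you show both sides lie in $H^{-s-2m}(\R^n)$ with Fourier transforms $|\xi|^{2m}A_a(\xi)\widehat u(\xi)$ and $|\xi|^{2s}Q_a(\xi)\widehat u(\xi)$, which coincide almost everywhere by \eqref{eq:factorization-symbol} and hence agree in $\mathcal S'(\R^n)$. One small fix: the lemma only assumes $a\in\mathcal A_m$, not ellipticity, so justify $|A_a(\xi)|\le C|\xi|^{2s}$ directly from \eqref{eq:symbol-intro} using $|\xi\cdot\theta|^{2s}\le|\xi|^{2s}$, which gives $C=c_{n,s}\|a\|_{L^1(\Sn)}$, rather than from \eqref{eq:ellipticity-intro}.
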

	
	\begin{proof}
		By Lemma \ref{lem:fourier-representation}, $L_a u$ is the Fourier multiplier with symbol $A_a$. Since $|A_a(\xi)|\le C|\xi|^{2s}$ and $u\in H^s(\R^n)$, one has $L_a u\in H^{-s}(\R^n)$. Applying the local differential operator $(-\Delta)^m$ gives a distribution in $H^{-s-2m}(\R^n)$ whose Fourier transform is $|\xi|^{2m}A_a(\xi)\widehat u(\xi)$.
		
		On the other hand, $Q_a(D)u\in H^{s-2m}(\R^n)$, and $(-\Delta)^sQ_a(D)u\in H^{-s-2m}(\R^n)$ has Fourier transform $|\xi|^{2s}Q_a(\xi)\widehat u(\xi)$. By Lemma \ref{lem:factorization}, the two multipliers $|\xi|^{2m}A_a(\xi)$ and $|\xi|^{2s}Q_a(\xi)$ are the same locally integrable function with polynomial growth. Hence, the two Fourier transforms agree in $\mathcal S'(\R^n)$, which proves \eqref{eq:factorization-distribution}.
	\end{proof}
	
	\begin{lemma}[Identification of constant coefficient operators]\label{lem:local-operator-id}
		Let $P(D)=\sum_{|\alpha|\le M}c_\alpha D^\alpha$ be a constant coefficient differential operator with $D=-\mathsf i\nabla$, and let $P(\xi)=\sum_{|\alpha|\le M}c_\alpha \xi^\alpha$	be its polynomial symbol. Let $O\subset\R^n$ be nonempty and open. If $P(D)f=0$ in $O$, for every $f\in C_c^\infty(O)$, then $P(\xi)\equiv0$.
	\end{lemma}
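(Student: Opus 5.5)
The plan is to test the hypothesis against suitably chosen functions and read off the coefficients of $P$ one at a time. Since $O$ is nonempty and open, fix a point $x_0\in O$ and a radius $r>0$ with $B_{2r}(x_0)\subset O$. Fix a cutoff $\chi\in C_c^\infty(B_{2r}(x_0))$ with $\chi\equiv1$ on $B_r(x_0)$. For an arbitrary $\xi\in\R^n$ (or even $\xi\in\C^n$), set
\[
f_\xi(x):=\chi(x)e^{\mathsf ix\cdot\xi}\in C_c^\infty(O).
\]

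The key computation is that, on the ball $B_r(x_0)$ where $\chi\equiv1$, the function $f_\xi$ coincides with the exponential $e^{\mathsf ix\cdot\xi}$. Because $P(D)$ is a local differential operator, it follows that $P(D)f_\xi=P(D)e^{\mathsf ix\cdot\xi}$ on $B_r(x_0)$. With $D=-\mathsf i\nabla$ one has $D^\alpha e^{\mathsf ix\cdot\xi}=\xi^\alpha e^{\mathsf ix\cdot\xi}$, and therefore
\[
P(D)f_\xi(x)=P(\xi)e^{\mathsf ix\cdot\xi},\quad x\in B_r(x_0).
\]
By hypothesis $P(D)f_\xi=0$ in $O$, so in particular $P(\xi)e^{\mathsf ix_0\cdot\xi}=0$. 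Since the exponential never vanishes, this gives $P(\xi)=0$. As $\xi\in\R^n$ was arbitrary, $P$ vanishes identically on $\R^n$. A polynomial vanishing on $\R^n$ has all coefficients zero, so $c_\alpha=0$ for all $\alpha$, which is what $P(\xi)\equiv0$ means.

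There is no genuine obstacle here. The only point needing care is that the hypothesis is stated for compactly supported test functions, while exponentials are not compactly supported; the cutoff $\chi$ together with locality of $P(D)$ handles this. An alternative route, if one prefers, is to test against the monomials $\chi(x)(x-x_0)^\beta$ and evaluate at $x_0$, inducting on $|\beta|$ to show each $c_\alpha=0$. The exponential argument above is shorter, so I would use it.
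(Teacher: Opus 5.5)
Your proof is correct and is essentially the paper's argument: test against $\chi(x)e^{\mathsf i x\cdot\xi}$ with $\chi\equiv1$ near a point $x_0\in O$, use locality of $P(D)$ to get $P(\xi)e^{\mathsf i x\cdot\xi}=0$ there, and conclude $P(\xi)=0$ for every $\xi\in\R^n$. Evaluating at $x_0$ is legitimate since $P(D)f_\xi$ is smooth, and the alternative monomial route you mention is not needed.
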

	
	\begin{proof}
		Fix $x_0\in O$. Choose $\chi\in C_c^\infty(O)$ such that $\chi=1$ in a neighborhood $V\Subset O$ of $x_0$. For each $\eta\in\R^n$, define the test function $\varphi_\eta(x)=\chi(x)e^{\mathsf i x\cdot\eta}$, then $\varphi_\eta\in C_c^\infty(O)$. The hypothesis gives $P(D)\varphi_\eta=0\quad \text{in }O$. On the neighborhood $V$, the cutoff $\chi$ is identically one. Hence, by locality of $P(D)$,
		\[
		P(D)\varphi_\eta=P(D)e^{\mathsf i x\cdot\eta}=P(\eta)e^{\mathsf i x\cdot\eta}\quad \text{in }V.
		\]
		Therefore, $P(\eta)e^{\mathsf i x\cdot\eta}=0$ in $V$. Since $e^{\mathsf i x\cdot\eta}$ never vanishes, we get $P(\eta)=0$. The vector $\eta\in\R^n$ was arbitrary, so $P(\xi)=0$ for all $\xi\in\R^n$. Thus $P$ is the zero polynomial.
	\end{proof}

	\section{Off-diagonal kernels and analytic continuation}\label{sec:analytic-kernel}
	
	This section proves the kernel representation and analytic continuation facts used for Theorem \ref{thm:analytic-main}. Throughout this section, $a$ denotes a smooth elliptic angular density. Let $R_a:H^{-s}(\Omega)\to\widetilde H^s(\Omega)$ be the solution operator for the zero exterior problem: for $F\in H^{-s}(\Omega)$, $R_aF$ is the unique element of $\widetilde H^s(\Omega)$ satisfying $\mathcal E_a(R_aF,\phi)=\ip{F}{\phi}$ for all $\phi\in\widetilde H^s(\Omega)$. The existence and boundedness of $R_a$ follow from the same Lax-Milgram argument as Proposition \ref{prop:wellposed}.
	
	When $v\in\widetilde H^s(\Omega)$ and $F\in H^{-s}(\Omega)$, we use the reverse duality pairing
	\[
	\ip{v}{F}_{\widetilde H^s(\Omega)\times H^{-s}(\Omega)}
	:=
	\overline{\ip{F}{v}_{H^{-s}(\Omega)\times \widetilde H^s(\Omega)}}.
	\]
	Thus, this pairing is linear in $v$ and conjugate-linear in $F$.
	
	For $x\in\Omega_e$, set $k_x^a(z):=K_a(x-z)$ for $z\in\Omega$. Locally in $x\in\Omega_e$, the point $x$ stays a positive distance away from $\Omega$, and therefore $k_x^a\in C^\infty(\overline\Omega)$ in the interior variable. In particular $k_x^a\in L^2(\Omega)\subset H^{-s}(\Omega)$.
	
	\begin{lemma}[Off-diagonal DN kernel]\label{lem:off-diagonal-kernel}
		Let $U,V\Subset\Omega_e$ be nonempty open sets with $\overline U\cap\overline V=\emptyset$. Then $\Lambda_a^{U,V}$ has a distribution kernel represented by
		\begin{equation}\label{eq:Sa-kernel}
			S_a(x,y)=-K_a(x-y)-\ip{R_a k_y^a}{k_x^a}_{\widetilde H^s(\Omega)\times H^{-s}(\Omega)},\quad x\in V, 	\quad y\in U.
		\end{equation}
		That is, for $f\in C_c^\infty(U)$ and $g\in C_c^\infty(V)$,
		\begin{equation}\label{eq:kernel-pairing}
			\big\langle\Lambda_a^{U,V}f, g\big\rangle=\int_V\int_U S_a(x,y)f(y)\overline{g(x)}\,dy\,dx.
		\end{equation}
		Moreover, if $x,y\in\Omega_e$ and $x\ne y$, then \eqref{eq:Sa-kernel} defines the off-diagonal kernel of the exterior DN map in a neighborhood of $(x,y)$.
	\end{lemma}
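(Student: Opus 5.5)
We split the solution as $u_f=f+w_f$ with $w_f:=u_f-f\in\widetilde H^s(\Omega)$, and then compute the pairing term by term. By \eqref{eq:lax-milgram-eq}, $w_f$ satisfies $\mathcal E_a(w_f,\phi)=-\mathcal E_a(f,\phi)$ for all $\phi\in\widetilde H^s(\Omega)$, so $w_f=-R_a(\mathcal E_a(f,\cdot)|_{\widetilde H^s(\Omega)})$. The first task is therefore to identify the functional $\phi\mapsto\mathcal E_a(f,\phi)$ on $\widetilde H^s(\Omega)$ when $\supp f\subset U$ with $\overline U\cap\overline\Omega=\emptyset$.

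For $f\in C_c^\infty(U)$ and $\phi\in C_c^\infty(\Omega)$ the supports are disjoint. Expanding \eqref{eq:energy-def} leaves only the cross terms, and no principal value is needed:
\[
\mathcal E_a(f,\phi)=-\int_\Omega\int_U K_a(x-y)f(y)\overline{\phi(x)}\,dy\,dx .
\]
This is the pairing of the smooth function $F_f(z):=\int_U K_a(z-y)f(y)\,dy=\int_U k^a_y(z)f(y)\,dy$ with $\phi$, up to the sign $-1$. By density it extends to all $\phi\in\widetilde H^s(\Omega)$, so $w_f=R_aF_f=\int_U R_ak^a_y\,f(y)\,dy$. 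The last equality moves $R_a$ inside a Bochner integral, which is justified because $y\mapsto k^a_y$ is continuous (indeed smooth) from $\overline U$ into $L^2(\Omega)\subset H^{-s}(\Omega)$, and $R_a$ is bounded and linear.

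Next we compute $\mathcal E_a(u_f,g)=\mathcal E_a(f,g)+\mathcal E_a(w_f,g)$ for $g\in C_c^\infty(V)$. Since $\supp f$ and $\supp g$ are disjoint, the same cross-term computation gives
\[
\mathcal E_a(f,g)=-\iint K_a(x-y)f(y)\overline{g(x)}\,dy\,dx ,
\]
which produces the direct term of $S_a$. For the second term, $w_f$ is supported in $\overline\Omega$ and $\overline V\cap\overline\Omega=\emptyset$, so
\[
\mathcal E_a(w_f,g)=-\int_V\Big(\int_\Omega w_f(z)K_a(x-z)\,dz\Big)\overline{g(x)}\,dx .
\]
Here $\int_\Omega w_f(z)\,k^a_x(z)\,dz$ should be read as the pairing $\ip{w_f}{k_x^a}$. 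The pairing is conjugate-linear in its second slot, but $k_x^a$ is real (since $a$ is real), so no conjugation issue arises. We then insert $w_f=\int_U R_ak^a_y f(y)\,dy$ and apply Fubini to obtain \eqref{eq:kernel-pairing} with $S_a$ as in \eqref{eq:Sa-kernel}. Fubini is justified by the joint continuity of $(x,y)\mapsto\ip{R_ak^a_y}{k^a_x}$ on $\overline V\times\overline U$. That continuity follows from the continuity of $x\mapsto k^a_x$ into $L^2(\Omega)$, the continuity of $y\mapsto R_ak^a_y$ into $\widetilde H^s(\Omega)\subset L^2(\Omega)$, and the fact that a bounded bilinear map of continuous maps is continuous.

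For the final claim, given $x\neq y$ in $\Omega_e$, we choose small balls $V\ni x$ and $U\ni y$ with disjoint closures, compactly contained in $\Omega_e$. We then apply the result already proved together with Lemma \ref{lem:restriction-smaller-windows}, which ensures the kernel is consistent across windows. The main obstacle is rigorously justifying the cross-term formula for $\mathcal E_a(w_f,g)$ when $w_f$ is only in $\widetilde H^s(\Omega)$ rather than smooth. We handle this by approximating $w_f$ with functions in $C_c^\infty(\Omega)$: the bilinear expression $\iint w(z)K_a(x-z)\overline{g(x)}$ is continuous in $w\in L^2(\Omega)$, since $K_a$ is bounded on $\overline\Omega\times\supp g$, and $\mathcal E_a(\cdot,g)$ is continuous on $H^s$.
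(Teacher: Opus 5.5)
Your proposal is correct and follows the paper's proof essentially step for step. You split $u_f=f+w_f$, identify $w_f=R_aF_f$ through the disjoint-support cross term, compute the direct term $\mathcal E_a(f,g)$ and the correction term $\mathcal E_a(w_f,g)$, insert $w_f=\int_U f(y)R_ak_y^a\,dy$, and localize with small disjoint balls for the last claim. Your justifications of the Bochner integral, of Fubini via joint continuity of the correction kernel, and of the density argument for $\mathcal E_a(w_f,g)$ with $w_f\in\widetilde H^s(\Omega)$ supply details the paper leaves implicit.
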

	
	\begin{proof}
		Let $f\in C_c^\infty(U)$ and write $u_f=f+v_f$, where $v_f\in\widetilde H^s(\Omega)$. We first identify $v_f$. If $\phi\in C_c^\infty(\Omega)$, then the supports of $f$ and $\phi$ are disjoint. Using the definition of the energy form and the evenness of $K_a$, one obtains
		\begin{equation}\label{eq:cross-term-f-phi}
			\mathcal E_a(f,\phi)=-\int_\Omega\bigg(\int_U K_a(z-y)f(y)\,dy\bigg)\overline{\phi(z)}\,dz.
		\end{equation}
		The same identity holds for all $\phi\in\widetilde H^s(\Omega)$ by density. Since $u_f$ solves the equation in $\Omega$, $\mathcal E_a(v_f,\phi)=-\mathcal E_a(f,\phi)$. The function
		\[
		F_f(z):=\int_U K_a(z-y)f(y)\,dy,\quad z\in\Omega,
		\]
		is smooth on $\overline\Omega$, since $\overline U$ and $\overline\Omega$ are disjoint. In particular $F_f\in L^2(\Omega)\subset H^{-s}(\Omega)$. The weak equation for $v_f$ is 
		\[
		\mathcal E_a(v_f,\phi)=\ip{F_f}{\phi}_{H^{-s}(\Omega)\times\widetilde H^s(\Omega)},
		\quad \phi\in\widetilde H^s(\Omega).
		\]
		By the definition of $R_a$, this gives $v_f=R_aF_f$. Also, by the evenness of $K_a$, $F_f=\int_U f(y)k_y^a\,dy \in H^{-s}(\Omega)$, meaning that, for every $\phi\in\widetilde H^s(\Omega)$, 
		\[
		\ip{F_f}{\phi}_{H^{-s}(\Omega),\widetilde H^s(\Omega)}
		=
		\int_U f(y)\ip{k_y^a}{\phi}_{H^{-s}(\Omega)\times\widetilde H^s(\Omega)}\,dy.
		\]
		
		Let $g\in C_c^\infty(V)$. Since $U$ and $V$ are disjoint and both are contained in $\Omega_e$, the direct exterior-to-exterior contribution is
		\begin{equation}\label{eq:direct-exterior-term}
			\mathcal E_a(f,g)=-\int_V\int_U K_a(x-y)f(y)\overline{g(x)}\,dy\,dx.
		\end{equation}
		Similarly, since $v_f$ is supported in $\overline\Omega$ and $g$ is supported in $V\Subset\Omega_e$,
		\begin{equation}\label{eq:correction-exterior-term}
			\mathcal E_a(v_f,g)=-\int_V\ip{v_f}{k_x^a}_{\widetilde H^s(\Omega)\times H^{-s}(\Omega)}\overline{g(x)}\,dx.
		\end{equation}
		Using the identity
		\[
		v_f=R_aF_f=\int_U f(y)R_a k_y^a\,dy\quad \text{in }\widetilde H^s(\Omega),
		\]
		which follows from the bounded linearity of $R_a$, we obtain from \eqref{eq:correction-exterior-term}
		\[
		\mathcal E_a(v_f,g)
		=
		-\int_V\int_U \ip{R_a k_y^a}{k_x^a}_{\widetilde H^s(\Omega)\times H^{-s}(\Omega)}f(y)\overline{g(x)}\,dy\,dx.
		\]
		Adding this identity to \eqref{eq:direct-exterior-term} proves \eqref{eq:Sa-kernel} and \eqref{eq:kernel-pairing}. The argument is local in $(x,y)$ as long as $x$ and $y$ remain in $\Omega_e$ and away from the diagonal, which proves the last assertion.
	\end{proof}
	
	\begin{lemma}[Analyticity of the off-diagonal kernel]\label{lem:analytic-kernel}
		Assume $a\in C^\omega(\Sn)$. Let $G$ be a connected component of $\Omega_e$. Then $S_a(x,y)$ is real analytic in $(x,y)$ on
		\begin{equation}\label{eq:pair-domain}
			\mathcal D_G:=\{(x,y)\in G\times G:x\ne y\}.
		\end{equation}
	\end{lemma}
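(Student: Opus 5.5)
Since $S_a(x,y)=-K_a(x-y)-\ip{R_a k_y^a}{k_x^a}$, we treat the two terms separately.

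\textbf{Direct term.} The function $z\mapsto a(z/|z|)|z|^{-n-2s}$ is real analytic on $\R^n\setminus\{0\}$. Indeed, $z\mapsto z/|z|$ is real analytic there with values in $\Sn$, $a\in C^\omega(\Sn)$, and $|z|^{-n-2s}$ is analytic away from the origin. Hence $(x,y)\mapsto K_a(x-y)$ is real analytic on $\{x\ne y\}\supset\mathcal D_G$.

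\textbf{Interior correction.} We write $B(x,y):=\ip{R_a k_y^a}{k_x^a}$. This is the sesquilinear pairing of $R_a k_y^a\in\widetilde H^s(\Omega)$ with $k_x^a\in H^{-s}(\Omega)$; since $K_a$ is real, no conjugation issue arises for analyticity in real variables. We will show that $B$ is real analytic on the whole of $\Omega_e\times\Omega_e$, with no diagonal restriction, because $x$ and $y$ only enter through kernels evaluated at interior points $z\in\Omega$.

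Fix $x_0\in\Omega_e$ and set $d=\dist(x_0,\overline\Omega)>0$. By analyticity of $K_a$ away from $0$, there is a complex neighborhood $N\subset\C^n$ of $x_0$ such that $K_a(x-z)$ extends holomorphically in $x\in N$ for every $z\in\overline\Omega$. This uses uniform Cauchy estimates for $a$ on a complex neighborhood of $\Sn$, together with the fact that $|x-z|\ge d/2$ on a real neighborhood. The extension is jointly continuous and bounded in $(x,z)\in N\times\overline\Omega$. Hence $x\mapsto k_x^a$ extends to a holomorphic map $N\to L^2(\Omega)\subset H^{-s}(\Omega)$, by differentiating under the integral, or equivalently by weak holomorphy plus local boundedness. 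Since $R_a:H^{-s}(\Omega)\to\widetilde H^s(\Omega)$ is bounded and linear, $y\mapsto R_a k_y^a$ is holomorphic into $\widetilde H^s(\Omega)$ on such a neighborhood of $y_0$.

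The pairing is bilinear once we use the real-structure convention: we define the holomorphic extension via $\ip{R_a k_y^a}{\overline{k_{\bar x}^a}}$. The result is then separately holomorphic in $x$ and $y$ near $(x_0,y_0)$ and locally bounded. By Hartogs' theorem, or directly because a bounded bilinear form composed with holomorphic maps is jointly holomorphic, $B$ extends holomorphically to a complex neighborhood of $(x_0,y_0)$. Its restriction to real points is therefore real analytic.

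\textbf{Main obstacle.} The delicate point is the uniform complexification of $K_a$. We need a single complex neighborhood of $x_0$ that works for all $z\in\overline\Omega$ simultaneously. This follows from compactness of $\overline\Omega$ and the fact that $(x-z)/|x-z|$ stays in a compact subset of $\R^n\setminus\{0\}$. Concretely, $|w|^2=\sum w_j^2$ is holomorphic and nonvanishing near real $w\ne0$, and $a$, extended homogeneously of degree $0$, is analytic near $\Sn$.

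Combining the two terms, $S_a$ is real analytic on $\mathcal D_G$. Note that connectedness of $G$ is not needed for analyticity itself; it is used later for unique continuation.
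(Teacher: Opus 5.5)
Your proof is correct and essentially the paper's. Both arguments show that $x\mapsto k_x^a$ is an analytic $L^2(\Omega)\subset H^{-s}(\Omega)$-valued map, using bounds on $K_a(x-z)$ uniform in $z\in\overline\Omega$, and then compose with the bounded operator $R_a$ and the continuous pairing; you express analyticity through a holomorphic extension, while the paper uses the equivalent real Cauchy estimates $|\partial_x^\alpha K_a(x-z)|\le C_XR_X^{|\alpha|}\alpha!$.

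One small slip: since $K_a$ is real at real points, reflection gives $\overline{k_{\bar x}^a}=k_x^a$, so $\ip{R_a k_y^a}{\overline{k_{\bar x}^a}}$ is antiholomorphic in $x$. The holomorphic extension you want is $\ip{R_a k_y^a}{\overline{k_x^a}}=\int_\Omega (R_ak_y^a)\,k_x^a\,dz$, which agrees with $B$ at real points, so your conclusion is unaffected.
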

	
	\begin{proof}
		Since $a$ is real analytic on $\Sn$, it admits local real analytic extensions in neighborhoods of points of $\Sn$. The map $z\mapsto z/|z|$ is real analytic on $\R^n\setminus\{0\}$, and so is $z\mapsto |z|^{-n-2s}$. Hence, by working locally and using compactness on the relevant sets, the homogeneous kernel $K_a(z)=\frac{a(z/|z|)}{|z|^{n+2s}}$ is real analytic for $z\in\R^n\setminus\{0\}$. In particular, $K_a(x-y)$ is real analytic on the set $x\ne y$.
		
		It remains to consider the correction term in \eqref{eq:Sa-kernel}. Fix $(x_0,y_0)\in\mathcal D_G$. Choose open neighborhoods $X,Y\Subset G$ of $x_0,y_0$ such that $\overline X\cap\overline Y=\emptyset$, $\dist(X,\Omega)>0$ and $\dist(Y,\Omega)>0$. For $x\in X$ and $z\in\Omega$, the set of points $x-z$ stays in a compact subset of $\R^n\setminus\{0\}$. Standard Cauchy estimates for real analytic functions, see for instance \cite[Chapter 1]{KrantzParks2002}, imply that for every compact $X_0\Subset X$ there are constants $C_X,R_X>0$ such that
		\[
		|\partial_x^\alpha K_a(x-z)|\le C_XR_X^{|\alpha|}\alpha!
		\]
		for all $x\in X_0$, $z\in\Omega$ and multiindices $\alpha$. Since $\Omega$ is bounded, the same estimate holds in $L^2(\Omega)$:
		\[
		\|\partial_x^\alpha k_x^a\|_{L^2(\Omega)}
		\le C_XR_X^{|\alpha|}\alpha!,
		\quad x\in X_0.
		\]
		Hence, the Taylor series of $x\mapsto k_x^a$ converges in $L^2(\Omega)$ locally uniformly in $x$. Moreover, $x\mapsto k_x^a$ is an $L^2(\Omega)$-valued, and then $H^{-s}(\Omega)$-valued, real analytic map. The same argument applies to $y\mapsto k_y^a$.
		
		The operator $R_a:H^{-s}(\Omega)\to\widetilde H^s(\Omega)$ is bounded, and the duality pairing between $\widetilde H^s(\Omega)$ and $H^{-s}(\Omega)$ is continuous. Therefore $(x,y)\mapsto\ip{R_a k_y^a}{k_x^a}$ is a real analytic function on $X\times Y$. Since $(x_0,y_0)$ was arbitrary, $S_a$ is real analytic on $\mathcal D_G$.
	\end{proof}
	
	\begin{lemma}[Far-field asymptotics]\label{lem:far-field-asymptotic}
		Let $a$ be a smooth elliptic angular density. There exists a constant $C>0$ such that for all $x,y\in\Omega_e$ with $x\ne y$,
		\begin{equation}\label{eq:far-field-bound}
			\abs{S_a(x,y)+K_a(x-y)}\le \frac{C}{\dist(x,\Omega)^{n+2s}\dist(y,\Omega)^{n+2s}}.
		\end{equation}
	\end{lemma}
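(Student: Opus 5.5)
The plan is to bound the correction term in \eqref{eq:Sa-kernel} directly. By Lemma \ref{lem:off-diagonal-kernel}, for $x,y\in\Omega_e$ with $x\ne y$ we have
\[
S_a(x,y)+K_a(x-y)=-\ip{R_a k_y^a}{k_x^a}_{\widetilde H^s(\Omega)\times H^{-s}(\Omega)}.
\]
The direct kernel cancels exactly, so it suffices to show
\[
\abs{\ip{R_a k_y^a}{k_x^a}}\le C\,\dist(x,\Omega)^{-n-2s}\dist(y,\Omega)^{-n-2s}.
\]
I would obtain this by chaining three elementary bounds, each with constants independent of $x$ and $y$.

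\emph{Step 1: continuity of the pairing and of $R_a$.} The pairing is continuous, so
\[
\abs{\ip{R_a k_y^a}{k_x^a}}\le\|R_a k_y^a\|_{H^s(\R^n)}\|k_x^a\|_{H^{-s}(\Omega)}.
\]
By the Lax--Milgram argument of Proposition \ref{prop:wellposed}, $\mathcal E_a(w,w)\ge c\|w\|_{H^s(\R^n)}^2$ on $\widetilde H^s(\Omega)$; this uses \eqref{eq:coercive-Hs-seminorm} together with Lemma \ref{lem:poincare}. Hence
\[
\|R_aF\|_{H^s(\R^n)}\le c^{-1}\|F\|_{H^{-s}(\Omega)},
\]
where $c$ depends only on $n,s,\Omega,\lambda$. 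This reduces the problem to bounding $\|k_x^a\|_{H^{-s}(\Omega)}$ for a single exterior point $x$.

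\emph{Step 2: from $H^{-s}$ to $L^2$ to sup norm.} For $\phi\in\widetilde H^s(\Omega)$,
\[
\abs{\int_\Omega k_x^a\overline\phi}\le\|k_x^a\|_{L^2(\Omega)}\|\phi\|_{L^2}\le\|k_x^a\|_{L^2(\Omega)}\|\phi\|_{H^s(\R^n)},
\]
so $\|k_x^a\|_{H^{-s}(\Omega)}\le\|k_x^a\|_{L^2(\Omega)}$. Then
\[
\|k_x^a\|_{L^2(\Omega)}\le|\Omega|^{1/2}\sup_{z\in\Omega}\abs{K_a(x-z)}.
\]
Since $a$ is smooth on the compact sphere, $\abs{K_a(w)}\le\|a\|_{L^\infty(\Sn)}\abs{w}^{-n-2s}$. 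For $z\in\Omega$ we have $\abs{x-z}\ge\dist(x,\Omega)$, which gives
\[
\|k_x^a\|_{H^{-s}(\Omega)}\le|\Omega|^{1/2}\|a\|_{L^\infty}\dist(x,\Omega)^{-n-2s}.
\]
The same bound holds for $y$. Combining with Step 1 yields \eqref{eq:far-field-bound} with
\[
C=c^{-1}|\Omega|\,\|a\|_{L^\infty}^2.
\]

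\emph{Expected obstacle.} The estimates themselves are routine. The main point needing care is that the constant must be uniform over \emph{all} $x\ne y$ in $\Omega_e$, not only over pairs from a fixed separated set $U\times V$ as in the statement of Lemma \ref{lem:off-diagonal-kernel}. I would address this through the last assertion of that lemma: the representation \eqref{eq:Sa-kernel} is local near any off-diagonal exterior pair. In addition, every constant above (coercivity, $|\Omega|$, $\|a\|_\infty$) is global and independent of $(x,y)$, so no further localization is needed. One should also note that $\dist(x,\Omega)>0$ for $x\in\Omega_e$, since $\Omega_e$ is open and disjoint from $\overline\Omega$; hence the right-hand side of \eqref{eq:far-field-bound} is finite.
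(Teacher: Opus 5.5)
Your proposal is correct and follows the paper's proof essentially step by step. The direct term cancels by the definition of $S_a$, and the correction term is bounded using the boundedness of $R_a$, the embedding $L^2(\Omega)\hookrightarrow H^{-s}(\Omega)$, and the pointwise bound $|K_a(x-z)|\le\|a\|_{L^\infty(\Sn)}\dist(x,\Omega)^{-n-2s}$. Your explicit constant $C=c^{-1}|\Omega|\,\|a\|_{L^\infty(\Sn)}^2$ and your remark on uniformity in $(x,y)$ are harmless refinements that the paper leaves implicit.
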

	
	\begin{proof}
		By the off-diagonal kernel representation \eqref{eq:Sa-kernel}, one has
		\[
		S_a(x,y)+K_a(x-y)=-\ip{R_a k_y^a}{k_x^a}_{\widetilde H^s(\Omega)\times H^{-s}(\Omega)}.
		\]
		Thus, it remains to estimate the correction term on the right-hand side.
		
		Since $a$ is smooth on $\Sn$, it is bounded. If $x\in\Omega_e$ and $z\in\Omega$, then $|x-z|\ge \dist(x,\Omega)$. Hence,
		\[
		|k_x^a(z)|=|K_a(x-z)| \le \|a\|_{L^\infty(\Sn)}\dist(x,\Omega)^{-n-2s},\quad z\in\Omega.
		\]
		Since $\Omega$ is bounded, this gives $\|k_x^a\|_{L^2(\Omega)} \le C\dist(x,\Omega)^{-n-2s}$. 
		The continuous embedding $L^2(\Omega)\hookrightarrow H^{-s}(\Omega)$ implies
		\[
		\|k_x^a\|_{H^{-s}(\Omega)} \le C\dist(x,\Omega)^{-n-2s}.
		\]
		The same estimate holds with $y$ in place of $x$.
		
		By the boundedness of the solution operator $R_a:H^{-s}(\Omega)\to\widetilde H^s(\Omega)$,
		\[
		\|R_a k_y^a\|_{\widetilde H^s(\Omega)}
		\le
		C\|k_y^a\|_{H^{-s}(\Omega)}.
		\]
		Therefore, by duality,
		\[
		\big| \ip{R_a k_y^a}{k_x^a}_{\widetilde H^s(\Omega)\times H^{-s}(\Omega)}\big| \le C\|k_y^a\|_{H^{-s}(\Omega)}\|k_x^a\|_{H^{-s}(\Omega)}.
		\]
		Combining the previous estimates gives
		\[
		\abs{S_a(x,y)+K_a(x-y)}
		\le
		\frac{C}{\dist(x,\Omega)^{n+2s}\dist(y,\Omega)^{n+2s}},
		\]
		which proves \eqref{eq:far-field-bound}.
	\end{proof}
	
	\begin{lemma}[Connectedness of the exterior pair domain]\label{lem:pair-domain-connected}
		Let $n\ge2$ and let $G\subset\R^n$ be a nonempty connected open set. Then $\mathcal D_G$ is path connected, where $\mathcal{D}_G$ is defined by \eqref{eq:pair-domain}. In particular, $\mathcal D_G$ is connected.
	\end{lemma}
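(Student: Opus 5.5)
The plan is to connect any two points of $\mathcal D_G$ by a path that moves one coordinate at a time, always inside $G$ and never hitting the diagonal.

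Fix $(x_0,y_0),(x_1,y_1)\in\mathcal D_G$. We first move $x$ from $x_0$ to $x_1$ while $y$ stays fixed at $y_0$. This requires a path in $G\setminus\{y_0\}$ from $x_0$ to $x_1$. However, $x_1$ may equal $y_0$, so we begin with a small preliminary adjustment. Since $G$ is open and $n\ge2$, we can perturb $y_0$ within a small ball $B\subset G\setminus\{x_0\}$ to a point $y_0'\notin\{x_1\}$. The segment from $y_0$ to $y_0'$ lies in $B$, so it avoids $x_0$, and $(x_0,y)$ stays in $\mathcal D_G$ along it. Similarly we may arrange $y_0'\neq y_1$ if convenient. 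Hence, without loss of generality, $y_0\notin\{x_1\}$.

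The key topological fact is that for $n\ge2$ and any point $p$, the set $G\setminus\{p\}$ is path connected. To prove it, note that $G$ is a connected open subset of $\R^n$, hence polygonally path connected. Take a polygonal path in $G$ between two points of $G\setminus\{p\}$. If the path passes through $p$, replace the portion inside a small ball $B_r(p)\subset G$ by an arc along the sphere $\partial B_r(p)$. This sphere is path connected because $n\ge2$, and the finitely many entry and exit points can be handled successively, or one simply takes the first entry and the last exit. Alternatively, $G\setminus\{p\}$ is connected because removing a point from a connected open set of dimension $n\ge2$ does not disconnect it (a local ball minus its center is connected), and connected open sets are path connected. This step, the only place where $n\ge2$ is used, is the main point, and I would write out the sphere-detour argument explicitly.

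With this, the path proceeds as follows. First move $x$ from $x_0$ to $x_1$ inside $G\setminus\{y_0\}$, with $y=y_0$ fixed. Then move $y$ from $y_0$ to $y_1$ inside $G\setminus\{x_1\}$, with $x=x_1$ fixed. Each stage stays in $G\times G$ off the diagonal. Concatenating the preliminary adjustment and the two stages gives a path in $\mathcal D_G$ from $(x_0,y_0)$ to $(x_1,y_1)$, so $\mathcal D_G$ is path connected and therefore connected.
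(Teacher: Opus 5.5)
Your proof is correct and takes essentially the same approach as the paper: both reduce to the path connectedness of $G\setminus\{p\}$ for $n\ge2$, and then move one coordinate at a time inside $\mathcal D_G$. The differences are cosmetic. You avoid the coincidence $y_0=x_1$ by a small preliminary perturbation of $y_0$ and then use two moves, where the paper inserts two auxiliary points $p,q$ and uses four moves. You also prove the punctured-domain fact by a first-entry/last-exit detour along $\partial B_r(p)$, rather than by the paper's chain of punctured balls.
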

	
	\begin{proof}
		We first record a basic topological fact. If $p\in G$, then $G\setminus\{p\}$ is path connected. Indeed, since $G$ is open and connected in $\R^n$, it is path-connected. Let $a,b\in G\setminus\{p\}$. Choose a continuous path in $G$ joining $a$ to $b$. By compactness of its image, one can cover the path by finitely many open balls $B_0,\ldots,B_N$ compactly contained in $G$, such that $a\in B_0$, $b\in B_N$ and $B_{i-1}\cap B_i\ne\emptyset$ for $1\le i\le N$. Choose points
		\[
		c_i\in (B_{i-1}\cap B_i)\setminus\{p\},	\quad 1\le i\le N, \quad c_0=a, \quad \text{and}\quad   c_{N+1}=b.
		\]
		This is possible because each $B_{i-1}\cap B_i$ is a nonempty open set. For each $0\le i\le N$, the points $c_i$ and $c_{i+1}$ belong to $B_i\setminus\{p\}$. Since $n\ge2$, a punctured ball in $\R^n$ is path connected. Thus, $c_i$ and $c_{i+1}$ can be joined by a path in $B_i\setminus\{p\}$. Concatenating these finitely many paths gives a path in $G\setminus\{p\}$ from $a$ to $b$.
		
		We now prove the assertion for $\mathcal D_G$. Let $(x_0,y_0),(x_1,y_1)\in\mathcal D_G$. Choose two distinct points $p,q\in G\setminus\{x_0,y_0,x_1,y_1\}$. This is possible since $G$ contains a nonempty open ball. By the previous discussions, there is a path $\alpha$ in $G\setminus\{y_0\}$ from $x_0$ to $p$. Hence, $t\mapsto(\alpha(t),y_0)$ is a path in $\mathcal D_G$ from $(x_0,y_0)$ to $(p,y_0)$. Next, there is a path $\beta$ in $G\setminus\{p\}$ from $y_0$ to $q$, and therefore $t\mapsto(p,\beta(t))$ is a path in $\mathcal D_G$ from $(p,y_0)$ to $(p,q)$. Similarly, there is a path $\gamma$ in $G\setminus\{q\}$ from $p$ to $x_1$, which gives a path from $(p,q)$ to $(x_1,q)$, and there is a path $\delta$ in $G\setminus\{x_1\}$ from $q$ to $y_1$, which gives a path from $(x_1,q)$ to $(x_1,y_1)$. Concatenating these four paths gives a path in $\mathcal D_G$ from $(x_0,y_0)$ to $(x_1,y_1)$. Hence $\mathcal D_G$ is path connected.
	\end{proof}
	
	\begin{lemma}[Analytic continuation to the far field]\label{lem:near-to-far}
		Let $a_1,a_2\in C^\omega(\Sn)$ be real analytic elliptic angular densities, let $G$ be the unbounded connected component of $\Omega_e$, and set $S_{12}:=S_{a_1}-S_{a_2}$. If $S_{12}=0$ on a nonempty open subset of $\mathcal D_G$, then $S_{12}=0$ on $\mathcal D_G$.
	\end{lemma}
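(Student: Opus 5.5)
The plan is a direct application of the identity theorem for real analytic functions on a connected domain.

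First, I apply Lemma \ref{lem:analytic-kernel} to each density. Since $a_1,a_2\in C^\omega(\Sn)$ are real analytic elliptic angular densities, in particular smooth, Lemma \ref{lem:off-diagonal-kernel} makes sense for both. Hence $S_{a_1}$ and $S_{a_2}$ are real analytic on $\mathcal D_G$, and so is their difference $S_{12}$.

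Second, I show that $\mathcal D_G$ is connected. The component $G$ is a nonempty connected open subset of $\R^n$ with $n\ge2$. By Lemma \ref{lem:pair-domain-connected}, $\mathcal D_G$ is path connected, hence connected. It is also open in $\R^{2n}$, being $G\times G$ with the closed diagonal removed. The unboundedness of $G$ plays no role here; it only matters later, in the far-field step.

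Third, I run the standard identity-theorem argument. Let $Z$ be the set of points of $\mathcal D_G$ at which all partial derivatives of $S_{12}$, of every order, vanish.
\begin{itemize}
\item $Z$ is closed in $\mathcal D_G$, because each derivative is continuous.
\item $Z$ is open, because at a point of $Z$ the Taylor series of $S_{12}$ vanishes identically. By real analyticity this series converges to $S_{12}$ on a neighborhood, so $S_{12}$ vanishes there, and so do all its derivatives.
\item $Z$ is nonempty, because it contains the given open set on which $S_{12}=0$.
\end{itemize}
By connectedness, $Z=\mathcal D_G$, and in particular $S_{12}=0$ on $\mathcal D_G$.

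The only potential obstacle is making sure the analyticity from Lemma \ref{lem:analytic-kernel} is genuine joint real analyticity in $(x,y)$, meaning convergent power series locally. The identity theorem requires exactly this, not separate analyticity in $x$ and in $y$. The proof of that lemma gives it: $K_a(x-y)$ is jointly analytic away from the diagonal, and the correction term is the composition of the analytic vector-valued maps $y\mapsto k_y^a$ and $x\mapsto k_x^a$ with the bounded operator $R_a$ and the continuous sesquilinear pairing. Because the pairing is conjugate-linear in its second slot, I would write it as a bilinear pairing with $\overline{k_x^a}$. Since $a$ is real-valued, $\overline{k_x^a}=k_x^a$, so the product of two convergent expansions is jointly analytic.
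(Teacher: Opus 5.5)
Your proposal is correct and follows the paper's proof: real analyticity of $S_{12}$ on $\mathcal D_G$ from Lemma~\ref{lem:analytic-kernel}, connectedness of $\mathcal D_G$ from Lemma~\ref{lem:pair-domain-connected}, and then the identity theorem. The only difference is that you prove the identity theorem yourself via the open-and-closed set of points where all derivatives vanish, and you check joint analyticity of the correction term explicitly, whereas the paper simply cites both facts.
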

	
	\begin{proof}
		By Lemma \ref{lem:analytic-kernel}, $S_{12}$ is real analytic on $\mathcal D_G$. By Lemma \ref{lem:pair-domain-connected}, $\mathcal D_G$ is connected. The conclusion follows from the identity theorem for real analytic functions, see for instance \cite[Chapter 1]{KrantzParks2002}.
	\end{proof}

	\section{Inverse problems}\label{sec:further}
	
	This section proves the main results. We first record the Alessandrini identity associated with changing the angular density. This identity is included to display the bilinear information carried by equality of the exterior DN maps and to indicate the obstruction in the general smooth separated case. The proofs of Theorem \ref{thm:overlap}, Theorem \ref{thm:separated-main}, and Theorem \ref{thm:analytic-main} then use the three mechanisms described above: the exterior diagonal singularity, the finite harmonic factorization, and analytic continuation of the off-diagonal kernel.
	
	\subsection{Alessandrini's identity and structural remarks}
	
	\begin{proposition}[Alessandrini identity]\label{prop:alessandrini-leading}
		Let $a_1,a_2\in L^\infty(\Sn)$ be real-valued even angular densities whose symbols satisfy the ellipticity condition \eqref{eq:ellipticity-intro}. Let $W_1,W_2\Subset\Omega_e$ be nonempty open sets. Assume that	
		\[	
		\Lambda_{a_1}^{W_1,W_2}f=\Lambda_{a_2}^{W_1,W_2}f \quad \text{for all }f\in C^\infty_c(W_1).
		\] 
		Let $f\in C_c^\infty(W_1)$ and $g\in C_c^\infty(W_2)$. For $j=1,2$, let $u_f^{(j)}$ and $u_g^{(j)}$ denote the solutions with exterior data $f$ and $g$, respectively, for the operator $L_{a_j}$. Then
		\begin{equation}\label{eq:leading-alessandrini}
			\mathcal E_{a_1-a_2}(u_f^{(1)},u_g^{(2)})=0,
		\end{equation}
		where $\mathcal{E}_a(u,v)$ is defined by \eqref{eq:energy-def}.
		Equivalently,
		\begin{equation}\label{eq:leading-alessandrini-expanded}
			\frac12\iint_{\R^n\times\R^n}
			\delta u_f^{(1)}(x,y)\overline{\delta u_g^{(2)}(x,y)}
			\frac{(a_1-a_2)((x-y)/|x-y|)}{|x-y|^{n+2s}}\,dx\,dy=0,
		\end{equation}
		where $\delta u(x,y)=u(x)-u(y)$.
	\end{proposition}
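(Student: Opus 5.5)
The plan is the standard Alessandrini argument for the energy form. We expand $\mathcal E_{a_1-a_2}(u_f^{(1)},u_g^{(2)})=\mathcal E_{a_1}(u_f^{(1)},u_g^{(2)})-\mathcal E_{a_2}(u_f^{(1)},u_g^{(2)})$, which is legitimate since $\mathcal E_a$ is linear in $a$ and $a_1-a_2\in L^\infty(\Sn)$. The form $\mathcal E_{a_1-a_2}$ is bounded on $H^s(\R^n)\times H^s(\R^n)$: its symbol $A_{a_1}-A_{a_2}$ is bounded by $C|\xi|^{2s}$, and the Fourier representation of Lemma \ref{lem:fourier-representation} extends by density. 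We then rewrite each of the two terms as a DN pairing and compare them.

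For the first term, write $u_g^{(2)}=g+w$ with $w\in\widetilde H^s(\Omega)$. The weak equation \eqref{eq:weak-dirichlet} for $u_f^{(1)}$ gives $\mathcal E_{a_1}(u_f^{(1)},w)=0$. Hence
\[
\mathcal E_{a_1}(u_f^{(1)},u_g^{(2)})=\mathcal E_{a_1}(u_f^{(1)},g)=\ip{\Lambda_{a_1}^{W_1,W_2}f}{g},
\]
using \eqref{eq:DN-pairing}. For the second term we use symmetry. Since $a_2$ is real and even, $\mathcal E_{a_2}(u,v)=\overline{\mathcal E_{a_2}(v,u)}$. Writing $u_f^{(1)}=f+w'$ with $w'\in\widetilde H^s(\Omega)$, the weak equation for $u_g^{(2)}$ kills the $w'$ term, so
\[
\mathcal E_{a_2}(u_f^{(1)},u_g^{(2)})=\mathcal E_{a_2}(f,u_g^{(2)}).
\]
We now insert the solution $u_f^{(2)}=f+w''$. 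By the equation for $u_g^{(2)}$ and Hermitian symmetry, $\mathcal E_{a_2}(w'',u_g^{(2)})=0$. Thus $\mathcal E_{a_2}(f,u_g^{(2)})=\mathcal E_{a_2}(u_f^{(2)},u_g^{(2)})$. Applying the same reduction in the second slot with $u_g^{(2)}=g+w$ and the equation for $u_f^{(2)}$ gives $\mathcal E_{a_2}(u_f^{(2)},g)=\ip{\Lambda_{a_2}^{W_1,W_2}f}{g}$.

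Subtracting, we get $\mathcal E_{a_1-a_2}(u_f^{(1)},u_g^{(2)})=\ip{(\Lambda_{a_1}^{W_1,W_2}-\Lambda_{a_2}^{W_1,W_2})f}{g}$, which vanishes by hypothesis. This proves \eqref{eq:leading-alessandrini}. The expanded form \eqref{eq:leading-alessandrini-expanded} is just the definition \eqref{eq:energy-def}. Its double integral converges absolutely for $H^s$ functions, by Cauchy--Schwarz together with $|a_1-a_2|\le C$ and finiteness of the Gagliardo seminorm. So the density extension coincides with the integral expression.

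The main point requiring care is the bookkeeping of which weak equation removes which correction term. The $a_2$-term must be converted using both $u_f^{(2)}$ and $u_g^{(2)}$, since only $\Lambda_{a_2}$ is defined through $u_f^{(2)}$. We also need to track the conjugations from the sesquilinear convention. One more detail is that $u_g^{(2)}-g\in\widetilde H^s(\Omega)$ is an admissible test function; this holds because $g\in C_c^\infty(W_2)$ and $W_2\Subset\Omega_e$. No unique continuation is needed, and the argument holds whether or not $W_1$ and $W_2$ overlap.
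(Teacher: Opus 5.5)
Your proposal is correct and follows the paper's proof essentially step for step. The $a_1$-term is reduced to $\langle\Lambda_{a_1}^{W_1,W_2}f,g\rangle$ via the $a_1$-equation for $u_f^{(1)}$. The $a_2$-term is moved along $u_f^{(1)}\to f\to u_f^{(2)}$ in the first slot, using the $a_2$-equation for $u_g^{(2)}$ and Hermitian symmetry, and then $u_g^{(2)}\to g$ in the second slot, before subtracting. Your remarks on the boundedness of $\mathcal E_{a_1-a_2}$ on $H^s(\R^n)$ and on the absolute convergence of the double integral are harmless additions. They make explicit the linearity in $a$ and the passage to the expanded form.
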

	
	\begin{proof}
		We use the weak formulation and the definition of the restricted DN map. Since $u_g^{(2)}$ has exterior value $g$, one has $u_g^{(2)}-g\in\widetilde H^s(\Omega)$. As $u_f^{(1)}$ solves the equation with coefficient $a_1$ in $\Omega$, $\mathcal E_{a_1}(u_f^{(1)},u_g^{(2)}-g)=0$. Hence
		\begin{equation}\label{eq:aless-step1}
			\mathcal E_{a_1}(u_f^{(1)},u_g^{(2)})
			=\mathcal E_{a_1}(u_f^{(1)},g)
			=\ip{\Lambda_{a_1}^{W_1,W_2}f}{g}.
		\end{equation}
		Next, $u_f^{(1)}-f\in\widetilde H^s(\Omega)$. Since $u_g^{(2)}$ solves the equation with coefficient $a_2$ in $\Omega$, $\mathcal E_{a_2}(u_g^{(2)},u_f^{(1)}-f)=0$. By Hermitian symmetry, $\mathcal E_{a_2}(u_f^{(1)}-f,u_g^{(2)})=0$. Thus
		\begin{equation}\label{eq:aless-step2}
			\mathcal E_{a_2}(u_f^{(1)},u_g^{(2)})
			=\mathcal E_{a_2}(f,u_g^{(2)}).
		\end{equation}
		We identify the right-hand side of \eqref{eq:aless-step2} with the DN pairing for $a_2$. Let $u_f^{(2)}$ be the solution with coefficient $a_2$ and exterior datum $f$. Since $u_f^{(2)}-f\in\widetilde H^s(\Omega)$ and $u_g^{(2)}$ solves the $a_2$-equation in $\Omega$, Hermitian symmetry gives $\mathcal E_{a_2}(u_f^{(2)}-f,u_g^{(2)})=0$. Therefore $\mathcal E_{a_2}(f,u_g^{(2)})=\mathcal E_{a_2}(u_f^{(2)},u_g^{(2)})$. Also, since $u_g^{(2)}-g\in\widetilde H^s(\Omega)$ and $u_f^{(2)}$ solves the $a_2$-equation in $\Omega$, one has
		\[
		\mathcal E_{a_2}(u_f^{(2)},u_g^{(2)})
		=\mathcal E_{a_2}(u_f^{(2)},g)
		=\ip{\Lambda_{a_2}^{W_1,W_2}f}{g}.
		\]
		Combining this with \eqref{eq:aless-step2}, we get
		\begin{equation}\label{eq:aless-step3}
			\mathcal E_{a_2}(u_f^{(1)},u_g^{(2)})
			=\ip{\Lambda_{a_2}^{W_1,W_2}f}{g}.
		\end{equation}
		The equality of the DN maps and \eqref{eq:aless-step1}, \eqref{eq:aless-step3} imply $\mathcal E_{a_1}(u_f^{(1)},u_g^{(2)})-\mathcal E_{a_2}(u_f^{(1)},u_g^{(2)})=0$. This is \eqref{eq:leading-alessandrini}. The expanded formula \eqref{eq:leading-alessandrini-expanded} follows from the definition of the energy forms.
	\end{proof}
	
	\para{The overlapping regime}
	Theorem \ref{thm:overlap} shows that overlapping exterior measurements determine smooth elliptic angular densities without any finite-dimensional assumption. The mechanism is local in the exterior variables. If $U\Subset W_1\cap W_2$ and $f\in C_c^\infty(U)$, then the solution satisfies $u_f=f$ in $U$. Hence $\Lambda_a^{U,U}$ contains the same leading diagonal singularity as $L_a$ on $U$. This diagonal singularity determines $K_a(z)=a(z/|z|)|z|^{-n-2s}$ and the symbol $A_a(\xi)=c_{n,s}\int_{\Sn}|\xi\cdot\theta|^{2s}a(\theta)\,d\theta$. The injectivity of the even cosine transform then gives $a$.
	
	The separated regime \eqref{eq:separated-intro} is different. If $x\in W_2$ and $y\in W_1$, then $x$ and $y$ stay away from the diagonal. Thus $K_a(x-y)$ is smooth on $W_2\times W_1$, and the exterior diagonal singularity is not measured. This is the reason that the separated problem requires a different mechanism.
	
	\para{Finite harmonic and analytic mechanisms}
	The proof of Theorem \ref{thm:separated-main} relies on the factorization $(-\Delta)^mL_a=(-\Delta)^sQ_a(D)$, where $Q_a(D)$ is a local constant coefficient differential operator. The locality of $Q_a(D)$ is decisive. If $u=0$ in an open set, then $Q_a(D)u=0$ in the same open set. This permits one to convert the separated exterior identity into the standard strong unique continuation property for $(-\Delta)^s$.
	
	This argument is specific to finite even spherical harmonic densities. The analytic result uses a different replacement for locality: the off-diagonal DN kernel is real analytic in the exterior variables. Equality on $W_2\times W_1$ therefore propagates by analytic continuation through the exterior pair domain, and the far-field asymptotic isolates the direct stable kernel.
	
	\para{General angular densities}
	For arbitrary smooth angular densities, neither mechanism is directly available. The spherical harmonic expansion is infinite, and multiplication by a finite power of $|\xi|^2$ no longer turns $A_a(\xi)$ into $|\xi|^{2s}$ times a polynomial. The operator replacing $Q_a(D)$ would be of infinite order and would not preserve vanishing on open sets. At the same time, a smooth off-diagonal DN kernel does not admit analytic continuation.
	
	The identity \eqref{eq:leading-alessandrini-expanded} involves jump products, and the unknown coefficient depends on the jump direction $(x-y)/|x-y|$. Thus, the information in the identity is not a pointwise product inside $\Omega$, but an angular family of nonlocal products over pairs of points. In the general smooth separated case, one would need a way to control these angular jump products without access to the exterior diagonal singularity.
	
	A related formal viewpoint is obtained by decomposing the bilinear form into exterior and interior variables. Let $E=\Omega_e$. The notation below is only meant to indicate the four pieces of the operator associated with $\mathcal E_a$ with respect to the decomposition $E\cup\Omega$. The first subscript denotes where the expression is tested, and the second subscript denotes where the input is supported. Thus, for $f,g\in C_c^\infty(E)$ and $v,\phi\in C_c^\infty(\Omega)$, define
	\[
	\ip{A^a_{EE}f}{g}:=\mathcal E_a(f,g),\quad \ip{A^a_{E\Omega}v}{g}:=\mathcal E_a(v,g),\quad 
	\ip{A^a_{\Omega E}f}{\phi}:=\mathcal E_a(f,\phi),\text{ and } \ip{A^a_{\Omega\Omega}v}{\phi}:=\mathcal E_a(v,\phi).
	\]
	Thus, $A^a_{EE}$ is the exterior-to-exterior part, $A^a_{\Omega E}$ is the part by which exterior data enter the weak equation in $\Omega$, $A^a_{E\Omega}$ is the part by which an interior correction contributes to exterior observations, and $A^a_{\Omega\Omega}$ is the interior Dirichlet part of the operator. Formally, this is the block decomposition
	\[
	A^a=\begin{pmatrix} A^a_{EE} & A^a_{E\Omega} \\ A^a_{\Omega E} & A^a_{\Omega\Omega} \end{pmatrix}.
	\]
	
	Let $u=f+v$, where $f$ is the exterior datum and $v\in\widetilde H^s(\Omega)$ is the interior correction. The weak equation $L_a u=0$ in $\Omega$ means $\mathcal E_a(f+v,\phi)=0$, for all $\phi\in\widetilde H^s(\Omega)$. Equivalently,
	\[
	\ip{A^a_{\Omega E}f}{\phi}+\ip{A^a_{\Omega\Omega}v}{\phi}=0\quad \text{for all }\phi\in\widetilde H^s(\Omega).
	\]
	In the formal block notation, this is $A^a_{\Omega E}f+A^a_{\Omega\Omega}v=0$. Solving this interior weak equation gives
	\[
	v=-(A^a_{\Omega\Omega})^{-1}A^a_{\Omega E}f.
	\]
	Substituting this into the exterior pairing gives
	\[
	\Lambda_a f=A^a_{EE}f+A^a_{E\Omega}v=\big(A^a_{EE}-A^a_{E\Omega}(A^a_{\Omega\Omega})^{-1}A^a_{\Omega E}\big)f.
	\]
	This is the Schur complement viewpoint. The direct term $A^a_{EE}$ is the exterior-to-exterior interaction. The second term describes the effect of exterior data entering $\Omega$, producing an interior correction, and then contributing back to the exterior observation.
	
	One can see the same structure in the off-diagonal kernel formula. If $x,y\in E$ and $x\ne y$, then the direct exterior-to-exterior part contributes $-K_a(x-y)$. The correction term is obtained by sending the point source at $y$ into $\Omega$ through $k_y^a(z)=K_a(y-z)$, solving the interior problem by the zero exterior solution operator $R_a$, and observing from $x$ through $k_x^a(z)=K_a(x-z)$. Thus
	\[
	S_a(x,y)=-K_a(x-y)-\ip{R_a k_y^a}{k_x^a}_{\widetilde H^s(\Omega)\times H^{-s}(\Omega)}.
	\]
	For $h=a_1-a_2$, the direct exterior-to-exterior part of the difference is
	\[
	(\mathcal P h)(x,y)=-\frac{h\big((x-y)/|x-y|\big)}{|x-y|^{n+2s}},\quad x\in W_2,\quad y\in W_1.
	\]
	Thus, at the level of the direct term, the measured directions are those contained in $W_2-W_1$. If these directions cover the sphere up to evenness, then this direct term contains strong angular information about $h$. The difficulty is that the measured DN map also contains the interior correction term above, which depends on the full interior solution operator. Controlling this correction is precisely the obstruction in the general smooth separated case.
	
	A treatment of arbitrary smooth angular densities in the fixed separated case would require a way to control this correction, or a propagation result for the Schur kernel. Such an argument would be analogous in spirit to entanglement principles for different fractional powers \cite{FL24,FKU24}, but it is more delicate here because all angular components have the same homogeneity.
	
	\subsection{Proofs of main results}\label{sec:proofs}
	
	We use the following strong unique continuation property for the fractional Laplacian. It is the only external unique continuation result used in the proofs.
	
	\begin{theorem}[Fractional unique continuation]\label{thm:fractional-UCP}
		Let $0<s<1$, let $r\in\R$ and let $O\subset\R^n$ be nonempty and open. If $w\in H^r(\R^n)$ satisfies
		\[
		w=(-\Delta)^s w=0\quad\text{in }O, 
		\]
		then $w=0$ in $\R^n$.
	\end{theorem}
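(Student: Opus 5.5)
This is the strong unique continuation theorem of Ghosh--Salo--Uhlmann \cite{GSU20}, stated for arbitrary Sobolev index $r\in\R$. The paper calls it the only external unique continuation input, so I would cite it rather than reprove it. The plan is to reduce it to the known statement by regularization, and to indicate the mechanism behind that statement.

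\emph{Step 1: reduction to a small ball and smoothing.} Shrinking $O$ if necessary, assume $O=B_{2\rho}(x_0)$. Fix a mollifier $\eta_\varepsilon$ supported in $B_\varepsilon(0)$ with $\varepsilon<\rho$, and set $w_\varepsilon=\eta_\varepsilon*w$. Convolution commutes with the Fourier multiplier $(-\Delta)^s$, and the support condition gives
\[
w_\varepsilon=(-\Delta)^s w_\varepsilon=0\quad\text{in }B_\rho(x_0).
\]
Moreover $w_\varepsilon\in H^{t}(\R^n)$ for every $t$, so the regularity index is no longer an issue. If we show $w_\varepsilon=0$ for all small $\varepsilon$, then $w=\lim_{\varepsilon\to0} w_\varepsilon=0$ in $\mathcal S'(\R^n)$.

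\emph{Step 2: the smooth case.} For $w\in H^t(\R^n)$ with $t$ large, vanishing together with $(-\Delta)^s w$ on a ball, I would invoke \cite[Theorem 1.2]{GSU20}. That theorem is proved via the Caffarelli--Silvestre extension. One sets
\[
\tilde w(x,y)=P_s(\cdot,y)*w,\qquad y>0,
\]
which solves $\nabla\cdot(y^{1-2s}\nabla\tilde w)=0$ in $\R^{n+1}_+$. The vanishing of $w$ on $B_\rho(x_0)$ gives zero Dirichlet data there. The vanishing of $(-\Delta)^s w$ on $B_\rho(x_0)$ gives
\[
\lim_{y\to0}y^{1-2s}\partial_y\tilde w=0\quad\text{on }B_\rho(x_0).
\]
With both Cauchy data zero, evenly reflecting $\tilde w$ across $y=0$ produces a solution of a degenerate elliptic equation with $A_2$ weight $|y|^{1-2s}$ that vanishes of infinite order at $(x_0,0)$. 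A Carleman estimate, or the Almgren frequency-function argument of Fall--Felli and R\"uland, then gives $\tilde w\equiv0$ near $(x_0,0)$. Unique continuation for the (now uniformly elliptic) equation in $y>0$ propagates this to $\tilde w\equiv0$ in the whole half space. Finally $w=\tilde w(\cdot,0)=0$.

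\emph{Main obstacle.} The delicate step is the boundary unique continuation for the degenerate extension equation; this is exactly the content of \cite{GSU20} and the works it builds on. A secondary point is justifying the extension and its Neumann trace for negative $r$. The mollification in Step 1 handles this, since it reduces everything to smooth $H^t$ functions.
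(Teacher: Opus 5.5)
Your proposal is correct and agrees with the paper, which gives no argument beyond citing \cite[Theorem 1.2]{GSU20}, the same result you invoke in Step 2. That theorem is already formulated for distributions in $H^{r}(\R^n)$ with arbitrary $r\in\R$, so your mollification reduction is valid but not needed: $w_\varepsilon$ and $(-\Delta)^s w_\varepsilon=\eta_\varepsilon*(-\Delta)^s w$ do vanish on $B_\rho(x_0)$, and $w_\varepsilon\to w$. Your extension sketch is only background on how the cited theorem is proved.
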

	
	This formulation follows from the well-known fractional unique continuation results of \cite[Theorem 1.2]{GSU20}. We use it only for distributions of the form $Q(D)u$, where $u\in H^s(\R^n)$ and $Q$ is a polynomial.
	
	\begin{lemma}[Off-diagonal smoothing]\label{lem:off-diagonal-smoothing}
		Let $a\in C^\infty(\Sn)$ be a smooth elliptic angular density and let $U\Subset\Omega_e$ be nonempty and open. For $f\in C_c^\infty(U)$, let $u_f$ solve \eqref{eq:weak-dirichlet} and define $R_a^Uf:=\Lambda_a^{U,U}f-L_af|_U$, where $L_a$ is defined by \eqref{eq:operator-intro}. Let $K_0\Subset U$, let $\eta\in C_c^\infty(U)$, let $\xi\in\R^n\setminus\{0\}$, and set $f_h(x)=\eta(x)e^{\mathsf ix\cdot\xi/h}$ for $0<h<1$. Then for every integer $N\ge0$ there is a constant $C_N>0$ such that
		\begin{equation}\label{eq:offdiag-smoothing-estimate}
			\|R_a^Uf_h\|_{L^2(K_0)}\le C_Nh^N.
		\end{equation}
	\end{lemma}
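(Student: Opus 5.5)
We first identify $R_a^U f$ explicitly. Since $f\in C_c^\infty(U)$ and $U\Subset\Omega_e$, we write $u_f=f+v_f$ with $v_f\in\widetilde H^s(\Omega)$. As in the proof of Lemma \ref{lem:off-diagonal-kernel}, one has $v_f=R_aF_f$, where $F_f(z)=\int_U K_a(z-y)f(y)\,dy$ for $z\in\Omega$. Testing against $g\in C_c^\infty(U)$, Lemma \ref{lem:DN-bounded} gives $\ip{\Lambda_a^{U,U}f}{g}=\mathcal E_a(f,g)+\mathcal E_a(v_f,g)$. The first term is $\ip{L_af}{g}$. For the second term, $\supp v_f\subset\overline\Omega$ and $\supp g\subset U$ are at positive distance, so the computation \eqref{eq:correction-exterior-term} applies. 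Hence
\[
R_a^Uf(x)=-\ip{R_aF_f}{k_x^a}_{\widetilde H^s(\Omega)\times H^{-s}(\Omega)},\quad x\in U.
\]
By duality and the boundedness of $R_a$, this yields
\[
|R_a^Uf(x)|\le C\|F_f\|_{H^{-s}(\Omega)}\|k_x^a\|_{H^{-s}(\Omega)}.
\]
On the compact set $K_0$, the factor $\|k_x^a\|_{H^{-s}(\Omega)}\le\|k_x^a\|_{L^2(\Omega)}$ is uniformly bounded, since $\dist(K_0,\Omega)>0$. It therefore suffices to prove that $\|F_{f_h}\|_{L^2(\Omega)}\le C_Nh^N$.

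The key step is a non-stationary phase argument in $y$. For $z\in\Omega$,
\[
F_{f_h}(z)=\int K_a(z-y)\eta(y)e^{\mathsf iy\cdot\xi/h}\,dy.
\]
The amplitude $y\mapsto K_a(z-y)\eta(y)$ is $C^\infty_c(U)$, and its derivatives are bounded uniformly in $z\in\Omega$. This uniformity holds because $\supp\eta\subset U$ and $|z-y|\ge\dist(\overline U,\Omega)>0$, while $K_a$ is smooth away from the origin, being homogeneous with $a\in C^\infty(\Sn)$. We use the identity $e^{\mathsf iy\cdot\xi/h}=\frac{h}{\mathsf i|\xi|^2}\,\xi\cdot\nabla_y e^{\mathsf iy\cdot\xi/h}$ and integrate by parts $N$ times, with no boundary terms thanks to the compact support of $\eta$. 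This gives $|F_{f_h}(z)|\le C_Nh^N|\xi|^{-N}\sup_{|\alpha|\le N}\|\partial_y^\alpha(K_a(z-\cdot)\eta)\|_{L^1}$, uniformly in $z\in\Omega$. Since $\Omega$ is bounded, we get $\|F_{f_h}\|_{L^2(\Omega)}\le C_Nh^N$.

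Combining the two bounds gives the uniform pointwise estimate $\sup_{x\in K_0}|R_a^Uf_h(x)|\le C_Nh^N$, and integrating over the bounded set $K_0$ gives \eqref{eq:offdiag-smoothing-estimate}.

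The main obstacle is the first step: justifying that $R_a^Uf$ is genuinely the function $x\mapsto-\ip{R_aF_f}{k_x^a}$ on $U$, and not merely a distribution. This requires checking that $\mathcal E_a(f,g)$ coincides with $\ip{L_af}{g}$ for the principal value operator \eqref{eq:operator-intro}, which holds for Schwartz $f$ by Lemma \ref{lem:fourier-representation}. It also requires checking that the disjoint-support formula \eqref{eq:correction-exterior-term} holds for $v_f\in\widetilde H^s(\Omega)$. For the latter, we approximate $v_f$ in $\widetilde H^s(\Omega)$ by functions in $C_c^\infty(\Omega)$, for which the formula is immediate, and pass to the limit.
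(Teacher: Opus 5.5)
Your proposal is correct and follows essentially the same route as the paper. The paper also writes $u_{f_h}=f_h+w_{f_h}$, shows by integrating by parts along $\xi$ that the cross-term source $I_h=F_{f_h}$ is $O(h^N)$ in $L^2(\Omega)$, and transfers this to $\|w_{f_h}\|_{H^s(\mathbb R^n)}$ via coercivity (equivalently, via boundedness of $R_a$). It then bounds $R_a^Uf_h(x)=-\langle w_{f_h},K_a(x-\cdot)\rangle$ uniformly for $x\in K_0$. Your explicit justification of the pointwise kernel representation of $R_a^Uf$ (the density argument for \eqref{eq:correction-exterior-term} and the identification $\mathcal E_a(f,g)=\langle L_af,g\rangle$) fills in a step the paper states without comment.
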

	
	\begin{proof}
		Write $u_f=f+w_f$ with $w_f\in\widetilde H^s(\Omega)$. Then $w_f$ is the unique solution of
		\begin{equation}\label{eq:wf-equation-overlap}
			\mathcal E_a(w_f,\phi)=-\mathcal E_a(f,\phi),
			\quad \phi\in\widetilde H^s(\Omega).
		\end{equation}
		Since $U\Subset\Omega_e$, the sets $U$ and $\Omega$ have positive distance. For $\phi\in C_c^\infty(\Omega)$ and $f_h=\eta e^{\mathsf ix\cdot\xi/h}$, the cross term is
		\[
		\mathcal E_a(f_h,\phi)=-\int_\Omega\overline{\phi(z)}\int_U f_h(y)K_a(z-y)\,dy\,dz.
		\]
		Set
		\[
		I_h(z):=\int_U f_h(y)K_a(z-y)\,dy,\quad z\in\Omega.
		\]
		We claim that $\|I_h\|_{L^2(\Omega)}\le C_Nh^N$ for every $N\ge0$. Since $\eta$ is compactly supported in $U$, we may write
		\[
		I_h(z)=\int_{\R^n}e^{\mathsf iy\cdot\xi/h}b_z(y)\,dy,\quad b_z(y):=\eta(y)K_a(z-y).
		\]
		For fixed $z\in\Omega$, the function $b_z$ is compactly supported in $\supp\eta$. Moreover, $\xi\cdot\nabla_y e^{\mathsf iy\cdot\xi/h}=\frac{\mathsf i|\xi|^2}{h}e^{\mathsf iy\cdot\xi/h}$ so that $e^{\mathsf iy\cdot\xi/h}=\frac{h}{\mathsf i|\xi|^2}\xi\cdot\nabla_y e^{\mathsf iy\cdot\xi/h}$. Integrating by parts $N$ times in the direction $\xi$ gives
		\[
		I_h(z)=\Big(\frac{\mathsf i h}{|\xi|^2}\Big)^N\int_{\R^n}e^{\mathsf iy\cdot\xi/h}(\xi\cdot\nabla_y)^N b_z(y)\,dy.
		\]
		There are no boundary terms because $b_z$ is compactly supported in $U$. By Leibniz' rule, $(\xi\cdot\nabla_y)^N b_z(y)$ is a finite sum of terms of the form $\partial_y^\alpha\eta(y)\,\partial_y^\beta K_a(z-y)$, $|\alpha|+|\beta|\le N$, with constants depending on $N$ and $\xi$. Since $z\in\Omega$ and $y\in\supp\eta$, the vector $z-y$ stays in a compact subset of $\R^n\setminus\{0\}$. Therefore, all derivatives $\partial^\beta K_a(z-y)$ with $|\beta|\le N$ are uniformly bounded for $z\in\Omega$ and $y\in\supp\eta$. It follows that
		\[
		\sup_{z\in\Omega}\int_{\R^n}|(\xi\cdot\nabla_y)^N b_z(y)|\,dy\le C_N.
		\]
		Consequently, $|I_h(z)|\le C_Nh^N$, for $z\in\Omega$. Since $\Omega$ is bounded, this implies
		\[
		\|I_h\|_{L^2(\Omega)}\le C_Nh^N.
		\]
		Thus, for $\phi\in C_c^\infty(\Omega)$,
		\[
		|\mathcal E_a(f_h,\phi)|\le \|I_h\|_{L^2(\Omega)}\|\phi\|_{L^2(\Omega)}
		\le C_Nh^N\|\phi\|_{H^s(\R^n)}.
		\]
		By density, the same estimate holds for all $\phi\in\widetilde H^s(\Omega)$, and this implies 
		\[
		\|\mathcal E_a(f_h,\cdot)\|_{(\widetilde H^s(\Omega))^*}\le C_Nh^N.
		\]
		Using \eqref{eq:wf-equation-overlap} with $\phi=w_{f_h}$ and the coercivity of $\mathcal E_a$ on $\widetilde H^s(\Omega)$, we obtain
		\[
		\|w_{f_h}\|_{H^s(\R^n)}\le C_Nh^N.
		\]
		
		For $x\in K_0$, one has $R_a^Uf_h(x)=L_aw_{f_h}(x)$. Since $w_{f_h}=0$ in $\Omega_e$,
		\[
		L_aw_{f_h}(x)=-\ip{w_{f_h}}{K_a(x-\cdot)}_{\widetilde H^s(\Omega)\times H^{-s}(\Omega)}.
		\]
		The family $z\mapsto K_a(x-z)$ is bounded in $H^{-s}(\Omega)$ uniformly for $x\in K_0$. Hence,
		\[
		|L_aw_{f_h}(x)|\le C\|w_{f_h}\|_{H^s(\R^n)},
		\quad x\in K_0.
		\]
		It follows that
		\[
		\|R_a^Uf_h\|_{L^2(K_0)}
		\le C\|w_{f_h}\|_{H^s(\R^n)}.
		\]
		Together with the estimate for $w_{f_h}$, this proves \eqref{eq:offdiag-smoothing-estimate}.
	\end{proof}
	
	\begin{lemma}[Semiclassical extraction of the symbol]\label{lem:semiclassical-symbol}
		Let $a\in C^\infty(\Sn)$ be a smooth elliptic angular density. Let $\eta\in C_c^\infty(\R^n)$ and $\xi\in\R^n\setminus\{0\}$. Set $f_h(x)=\eta(x)e^{\mathsf i x\cdot\xi/h}$, then
		\begin{equation}\label{eq:semiclassical-symbol-limit}
			\lim_{h\to0}h^{2s}\mathcal E_a(f_h,f_h)=A_a(\xi)\|\eta\|_{L^2(\R^n)}^2.
		\end{equation}
	\end{lemma}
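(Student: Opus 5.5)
The plan is to use the Fourier representation of the energy form from Lemma \ref{lem:fourier-representation}. Since $f_h\in\mathcal S(\R^n)$, we have
\[
\mathcal E_a(f_h,f_h)=\int_{\R^n}A_a(\zeta)|\widehat{f_h}(\zeta)|^2\,d\zeta .
\]
A direct computation gives $\widehat{f_h}(\zeta)=\widehat\eta(\zeta-\xi/h)$. After the substitution $\zeta=\xi/h+\tau$ this becomes
\[
\mathcal E_a(f_h,f_h)=\int_{\R^n}A_a(\xi/h+\tau)|\widehat\eta(\tau)|^2\,d\tau .
\]
The symbol $A_a$ is homogeneous of degree $2s$, which is clear from \eqref{eq:symbol-intro}. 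Hence
\[
h^{2s}A_a(\xi/h+\tau)=A_a(\xi+h\tau),
\]
and therefore
\[
h^{2s}\mathcal E_a(f_h,f_h)=\int_{\R^n}A_a(\xi+h\tau)|\widehat\eta(\tau)|^2\,d\tau .
\]

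The next step is to pass to the limit $h\to0$ by dominated convergence. Pointwise convergence $A_a(\xi+h\tau)\to A_a(\xi)$ needs continuity of $A_a$. This holds because $\zeta\mapsto|\zeta\cdot\theta|^{2s}$ is continuous and bounded by $|\zeta|^{2s}$ on compact sets, while $a$ is bounded on $\Sn$.

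For domination, the ellipticity bound \eqref{eq:ellipticity-intro} gives
\[
A_a(\xi+h\tau)\le\Lambda|\xi+h\tau|^{2s}\le\Lambda(|\xi|+|\tau|)^{2s}\quad\text{for }0<h<1 .
\]
The right-hand side is integrable against $|\widehat\eta(\tau)|^2$, since $\widehat\eta\in\mathcal S$. The limit is then $A_a(\xi)\|\widehat\eta\|_{L^2}^2$, and Plancherel gives $\|\widehat\eta\|_{L^2}=\|\eta\|_{L^2(\R^n)}$.

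There is no real obstacle here. The only point to check carefully is the sign convention in the Fourier transform, so that $\widehat{f_h}$ is centered at $+\xi/h$ rather than $-\xi/h$. Either way the result is the same, since $A_a$ is even in $\zeta$ because $|\cdot|^{2s}$ is even.
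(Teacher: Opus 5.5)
Your proposal is correct and follows the paper's proof essentially step for step. Both use the Fourier representation of $\mathcal E_a$, the shift $\widehat{f_h}(\zeta)=\widehat\eta(\zeta-\xi/h)$, homogeneity of $A_a$ of degree $2s$ to rescale, and dominated convergence with the majorant $\Lambda(|\xi|+|\tau|)^{2s}|\widehat\eta(\tau)|^2$, followed by Plancherel. Your explicit check that $A_a$ is continuous, which gives the pointwise convergence, fills in a step the paper leaves implicit.
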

	
	\begin{proof}
		By Lemma \ref{lem:fourier-representation},
		\[
		h^{2s}\mathcal E_a(f_h,f_h)=h^{2s}\int_{\R^n}A_a(\zeta)|\widehat\eta(\zeta-\xi/h)|^2\,d\zeta.
		\]
		We make the change of variables $\rho=\zeta-\xi/h$, so that $\zeta=\xi/h+\rho$, then
		\[
		h^{2s}\mathcal E_a(f_h,f_h)=h^{2s}\int_{\R^n}		A_a(\xi/h+\rho)|\widehat\eta(\rho)|^2\,d\rho.
		\]
		Since $A_a$ is homogeneous of degree $2s$,
		\[
		h^{2s}A_a(\xi/h+\rho)=h^{2s}A_a\big((\xi+h\rho)/h\big)=A_a(\xi+h\rho).
		\]
		Therefore,
		\[
		h^{2s}\mathcal E_a(f_h,f_h)=\int_{\R^n}A_a(\xi+h\rho)|\widehat\eta(\rho)|^2\,d\rho.
		\]
		For $0<h<1$, $|A_a(\xi+h\rho)|\le C|\xi+h\rho|^{2s}\le C_\xi(1+|\rho|)^{2s}$. Since $\widehat\eta$ is rapidly decreasing, the function $C_\xi(1+|\rho|)^{2s}|\widehat\eta(\rho)|^2$ is integrable on $\R^n$. Also, $A_a(\xi+h\rho)\to A_a(\xi)$ pointwise as $h\to0$. Dominated convergence theorem gives
		\[
		\lim_{h\to0}h^{2s}\mathcal E_a(f_h,f_h)=\int_{\R^n}A_a(\xi)|\widehat\eta(\rho)|^2\,d\rho
		=A_a(\xi)\|\eta\|_{L^2(\R^n)}^2,
		\]
		where the last equality follows from Plancherel's theorem.
	\end{proof}
	
	\begin{proof}[Proof of Theorem \ref{thm:overlap}]
		Choose a nonempty open set $U$ such that $U\Subset W_1\cap W_2$. By Lemma \ref{lem:restriction-smaller-windows}, the assumption \eqref{eq:overlap-DN-equality-intro} implies
		\[
		\Lambda_{a_1}^{U,U}f=\Lambda_{a_2}^{U,U}f
		\quad \text{for all }f\in\widetilde H^s(U).
		\]
		Let $\eta\in C_c^\infty(U)$ be nonzero, fix $\xi\in\R^n\setminus\{0\}$, and define $f_h(x)=\eta(x)e^{\mathsf i x\cdot\xi/h}$, for $0<h<1$. Let $K_0\Subset U$ be a compact set such that $\supp\eta\subset K_0$. For $j=1,2$, the off-diagonal smoothing lemma gives
		\[
		\Lambda_{a_j}^{U,U}f_h=L_{a_j}f_h|_U+R_{a_j}^Uf_h\quad \text{in }\mathcal D'(U),
		\]
		where, for every integer $N\ge0$,
		\[
		\|R_{a_j}^Uf_h\|_{L^2(K_0)}\le C_Nh^N.
		\]
		Pairing the previous identity with $f_h$ gives
		\[
		\big\langle\Lambda_{a_j}^{U,U}f_h, f_h\big\rangle=\ip{L_{a_j}f_h}{f_h}+\int_U R_{a_j}^Uf_h\,\overline{f_h}\,dx.
		\]
		Since $\supp f_h\subset K_0$, the remainder term satisfies
		\[
		\bigg|\int_U R_{a_j}^Uf_h\,\overline{f_h}\,dx\bigg|\le \|R_{a_j}^Uf_h\|_{L^2(K_0)}\|\eta\|_{L^2(\R^n)}
		\le C_Nh^N.
		\]
		In particular,
		\[
		h^{2s}\int_U R_{a_j}^Uf_h\,\overline{f_h}\,dx\to0
		\quad \text{as }h\to0.
		\]
		Moreover, by the definition of $L_{a_j}$ through the energy form, $\ip{L_{a_j}f_h}{f_h}=\mathcal E_{a_j}(f_h,f_h)$. Therefore,
		\begin{equation}\label{eq:overlap-proof-limit}
			\lim_{h\to0}h^{2s}\big\langle \Lambda_{a_j}^{U,U}f_h, f_h\big\rangle=\lim_{h\to0}h^{2s}\mathcal E_{a_j}(f_h,f_h).
		\end{equation}
		By Lemma \ref{lem:semiclassical-symbol},
		\[
		\lim_{h\to0}h^{2s}\mathcal E_{a_j}(f_h,f_h)=A_{a_j}(\xi)\|\eta\|_{L^2(\R^n)}^2.
		\]
		Combining this with \eqref{eq:overlap-proof-limit}, we obtain $\lim_{h\to0}h^{2s}\big\langle\Lambda_{a_j}^{U,U}f_h,f_h\big\rangle=A_{a_j}(\xi)\|\eta\|_{L^2(\R^n)}^2$. The restricted DN maps agree on $U$, hence the left hand side is the same for $j=1$ and $j=2$. Since $\eta\not\equiv0$, it follows that
		\[
		A_{a_1}(\xi)=A_{a_2}(\xi)\quad \text{for every }\xi\in\R^n\setminus\{0\}.
		\]
		By homogeneity and continuity of the symbols, the equality also holds at $\xi=0$. Therefore,
		\[
		\int_{\Sn}|\xi\cdot\theta|^{2s}(a_1-a_2)(\theta)\,d\theta=0	\quad \text{for all }\xi\in\R^n.
		\]
		Since $a_1-a_2$ is smooth and even, Lemma \ref{lem:smooth-cosine-injective} gives $a_1=a_2$ on $\Sn$.
	\end{proof}
	
	\begin{proof}[Proof of Theorem \ref{thm:separated-main}]
		Fix $f\in C_c^\infty(W_1)$. For $j=1,2$, let $u_j\in H^s(\R^n)$ be the solution of
		\begin{equation}\label{eq:main-proof-dirichlet}
			\begin{cases}
				L_{a_j}u_j=0 & \text{in }\Omega,\\
				u_j=f & \text{in }\Omega_e.
			\end{cases}
		\end{equation}
		Since $\supp f\subset W_1$ and $\overline W_1\cap\overline W_2=\emptyset$, the exterior condition gives $u_1=u_2=0$ in $W_2$. By Lemma \ref{lem:DN-bounded} and the hypothesis \eqref{eq:operator-DN-equality-main},
		\begin{equation}\label{eq:La1La2-in-W2}
			L_{a_1}u_1=L_{a_2}u_2
			\quad\text{in }\mathcal D'(W_2).
		\end{equation}
		Applying the local operator $(-\Delta)^m$ to \eqref{eq:La1La2-in-W2}, we get $(-\Delta)^mL_{a_1}u_1=(-\Delta)^mL_{a_2}u_2$ in $\mathcal D'(W_2)$. By Lemma \ref{lem:factorization-distributions}, this gives
		\[
		(-\Delta)^s(Q_{a_1}(D)u_1-Q_{a_2}(D)u_2)=0\quad \text{in } \mathcal D'(W_2).
		\]
		Set
		\[
		w:=Q_{a_1}(D)u_1-Q_{a_2}(D)u_2.
		\]
		Since $u_j\in H^s(\R^n)$ and $Q_{a_j}(D)$ has order $2m$, one has $w\in H^{s-2m}(\R^n)$. The previous identity says that $(-\Delta)^s w=0$ in $\mathcal D'(W_2)$. On the other hand, $w=0$ in $W_2$, because $u_1=u_2=0$ in $W_2$ and $Q_{a_j}(D)$ are local differential operators. Theorem \ref{thm:fractional-UCP} gives $w=0$ in $\R^n$. Restricting this identity to $W_1$ and using $u_1=u_2=f$ in $W_1$, we obtain $Q_{a_1}(D)f=Q_{a_2}(D)f$ in $W_1$. Since $f\in C_c^\infty(W_1)$ was arbitrary, Lemma \ref{lem:local-operator-id} gives $Q_{a_1}=Q_{a_2}$. By the injectivity of $a\mapsto Q_a$ in Lemma \ref{lem:factorization}, $a_1=a_2$.
	\end{proof}

	\begin{proof}[Proof of Theorem \ref{thm:analytic-main}]
		Let $S_j$ denote the off-diagonal DN kernel associated with $a_j$, and set $S_{12}:=S_1-S_2$. By Lemma \ref{lem:off-diagonal-kernel}, the operator $\Lambda_{a_1}^{W_1,W_2}-\Lambda_{a_2}^{W_1,W_2}$ has distribution kernel $S_{12}$ on $W_2\times W_1$. For $f\in C_c^\infty(W_1)$ and $g\in C_c^\infty(W_2)$, the equality \eqref{eq:analytic-DN-equality-main} gives
		\[
		0=\ip{(\Lambda_{a_1}^{W_1,W_2}-\Lambda_{a_2}^{W_1,W_2})f}{g}
		=\int_{W_2}\int_{W_1}S_{12}(x,y)f(y)\overline{g(x)}\,dy\,dx.
		\]
		Finite sums of functions of the form $(x,y)\mapsto \overline{g(x)}f(y)$ are dense in $C_c^\infty(W_2\times W_1)$. Hence, $S_{12}=0$ as a distribution on $W_2\times W_1$. Since $S_{12}$ is real analytic there by Lemma \ref{lem:analytic-kernel}, it follows that $S_{12}$ vanishes pointwise on $W_2\times W_1$.
		
		By Lemma \ref{lem:near-to-far}, $S_{12}=0$ on the full exterior pair domain $\mathcal D_G$. Since $\Omega$ is bounded, there exists $R_0>0$ such that $\R^n\setminus B_{R_0}\subset G$. Fix $\theta\in\Sn$. Choose $e\in\Sn$ with $e\cdot\theta=0$, and set $p=e+\theta$ and $q=e$. Then $p-q=\theta$. For every sufficiently large $R$, the points $Rp$ and $Rq$ belong to $G$, and $Rp\ne Rq$. Hence
		\begin{equation}\label{eq:analytic-proof-zero}
			S_{12}(Rp,Rq)=0.
		\end{equation}
		Using the kernel representation \eqref{eq:Sa-kernel}, we have
		\[
		S_{12}(x,y)=-K_{a_1-a_2}(x-y)+\big(S_{a_1}(x,y)+K_{a_1}(x-y)\big)-\big(S_{a_2}(x,y)+K_{a_2}(x-y)\big).
		\]
		Since $p\ne0$ and $q\ne0$, and since $\Omega$ is bounded, there are constants $c,C>0$ such that
		\[
		cR\le \dist(Rp,\Omega),\dist(Rq,\Omega)\le CR
		\]
		for all sufficiently large $R$. By Lemma \ref{lem:far-field-asymptotic}, the last two terms are $O(R^{-2n-4s})$ when $(x,y)=(Rp,Rq)$. Hence, \eqref{eq:analytic-proof-zero} gives
		\begin{equation}\label{eq:K+R_asymp}
			0=-K_{a_1-a_2}(R\theta)+O(R^{-2n-4s}).
		\end{equation}
		Since $K_{a_1-a_2}(R\theta)=R^{-n-2s}(a_1-a_2)(\theta)$, multiplying by $R^{n+2s}$ and sending $R\to\infty$,  \eqref{eq:K+R_asymp} gives $(a_1-a_2)(\theta)=0$. Since $\theta\in\Sn$ was arbitrary, $a_1=a_2$ on $\Sn$.
	\end{proof}
	
	\begin{corollary}[Arbitrary exterior open sets in the finite harmonic class]\label{cor:any-exterior-sets}
		Let $0<s<1$ and $n\ge2$. Let $\Omega\subset\R^n$ be bounded and open. Let $W_1,W_2\Subset\Omega_e$ be nonempty open sets. Let $a_1,a_2\in\mathcal A_m$ be admissible finite harmonic densities of the same order $m$. If $\Lambda_{a_1}^{W_1,W_2}f=\Lambda_{a_2}^{W_1,W_2}f$ for all $f\in \wt H^s(W_1)$, then $a_1=a_2$.
	\end{corollary}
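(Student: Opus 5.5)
The plan is to reduce to Theorem \ref{thm:separated-main} by shrinking the measurement sets, exactly as indicated after the statement of that theorem in the introduction. If already $\overline W_1\cap\overline W_2=\emptyset$, there is nothing to do. In general, I would construct nonempty open sets $U_1\Subset W_1$ and $U_2\Subset W_2$ with $\overline U_1\cap\overline U_2=\emptyset$. Then Lemma \ref{lem:restriction-smaller-windows} transfers the hypothesis to $\Lambda_{a_1}^{U_1,U_2}=\Lambda_{a_2}^{U_1,U_2}$ on $\widetilde H^s(U_1)$. Since $U_1,U_2\Subset\Omega_e$ are nonempty and separated, Theorem \ref{thm:separated-main} applies and yields $a_1=a_2$.

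The construction of $U_1,U_2$ is elementary. Pick $x_1\in W_1$ and $x_2\in W_2$ with $x_1\ne x_2$. This is possible because $W_2$ is a nonempty open set in $\R^n$ and hence contains more than one point; so if $W_2$ happens to contain $x_1$, we simply choose a different point of $W_2$. Let $d=|x_1-x_2|>0$, and choose $r>0$ with $r<d/3$, $\overline{B_r(x_1)}\subset W_1$ and $\overline{B_r(x_2)}\subset W_2$. Set $U_j=B_r(x_j)$. Then $U_j\Subset W_j\Subset\Omega_e$, and the closed balls are disjoint since $2r<d$.

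We also check the remaining hypotheses of Theorem \ref{thm:separated-main}. The densities $a_1,a_2$ are admissible of the same order $m$, which is unchanged by the restriction, and $\Omega$ and $s,n$ are as before. The only point needing a brief remark is that Lemma \ref{lem:restriction-smaller-windows} requires $U_j\Subset W_j$, which holds by construction.

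No step is a real obstacle. The one subtlety is that in the overlapping case $x_1$ and $x_2$ must be chosen distinct, which is handled above.
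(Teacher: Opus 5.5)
Your proof is correct and is essentially the paper's own argument: choose distinct points $x_1\in W_1$, $x_2\in W_2$, shrink to small balls $U_j\Subset W_j$ with disjoint closures, transfer the hypothesis via Lemma \ref{lem:restriction-smaller-windows}, and apply Theorem \ref{thm:separated-main}. Your explicit choice $r<|x_1-x_2|/3$ simply makes precise the paper's ``so small that'' step.
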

	
	\begin{proof}
		Choose $x_1\in W_1$ and $x_2\in W_2$ with $x_1\ne x_2$. This is possible because nonempty open subsets of $\R^n$ contain more than one point. Since $W_1$ and $W_2$ are open, we may choose nonempty open balls $U_1\Subset W_1$ and $U_2\Subset W_2$ around $x_1$ and $x_2$, respectively, so small that $\overline U_1\cap\overline U_2=\emptyset$. By Lemma \ref{lem:restriction-smaller-windows}, the equality of the DN maps on $W_1,W_2$ restricts to the equality on $U_1,U_2$. Theorem \ref{thm:separated-main} gives $a_1=a_2$.
	\end{proof}

	\section*{Statements and Declarations}
	
	\noindent\textbf{Data availability statement.}
	No datasets were generated or analyzed during the current study.
	
	\medskip
	
	\noindent\textbf{Conflict of Interests.} The author declares no competing interests.

	\medskip

	\noindent\textbf{Acknowledgments.}
	The author is partially supported by the National Science and Technology Council (NSTC) of Taiwan, under the project 113-2628-M-A49-003. The author also acknowledges financial support from the Alexander von Humboldt Foundation through the Henriette Herz Scouting Programme, hosted by Universität Duisburg-Essen.

	\bibliographystyle{alpha}
	\bibliography{refs}
	
\end{document}